%% file: main.tex
\documentclass[11pt]{amsart}
\usepackage[margin=1in]{geometry}

\usepackage{amssymb}
\usepackage{amsthm}
\usepackage{amsmath}
\usepackage{mathrsfs}
\usepackage{amsbsy}
\usepackage{standalone}
\usepackage[all]{xy}
\usepackage{bm}
\usepackage{hyperref}
\usepackage{tikz}
\usetikzlibrary{quotes}
\usepackage{array}
\usepackage{enumerate}
\usepackage{xcolor}
\usepackage{bbm}
\usepackage{comment}
\usepackage{dynkin-diagrams}
\usepackage{ifthen}
\usepackage{thmtools, thm-restate}
\graphicspath{{./}{figures/}}

\usepackage[noabbrev,capitalise,nameinlink]{cleveref}

\definecolor{lavender}{rgb}{0.4,0,1.0}
\definecolor{peach}{rgb}{1,0.43,0.39}
\definecolor{DarkPink}{rgb}{0.9,0,0.45} 
\definecolor{Green}{RGB}{0,204,0} 

\hypersetup{colorlinks=true, citecolor=DarkPink, linkcolor=DarkPink,urlcolor=DarkPink}

\usepackage{hhline}
\allowdisplaybreaks
\usepackage[noadjust]{cite}
\usepackage{asymptote}

\usepackage{caption}
\usepackage{diagbox}
\usepackage[noabbrev,capitalise,nameinlink]{cleveref}
\crefname{conjecture}{Conjecture}{Conjectures}

\newtheorem{theorem}{Theorem}[section]
\newtheorem{proposition}[theorem]{Proposition}
\newtheorem{corollary}[theorem]{Corollary}
\newtheorem{conjecture}[theorem]{Conjecture}

\newtheorem{lemma}[theorem]{Lemma}

\theoremstyle{definition}
\newtheorem{definition}[theorem]{Definition}
\newtheorem{remark}[theorem]{Remark}
\newtheorem{observation}[theorem]{Observation}
\crefname{observation}{Observation}{Observations}
\newtheorem{example}[theorem]{Example}

\newcommand{\Inv}{\mathrm{Inv}} 
 
\newcommand{\Cay}{\mathrm{Cay}}
\newcommand{\Cox}{\mathrm{Cox}}

\newcommand{\wh}{\widehat}

\newcommand{\id}{\mathrm{id}} 
\newcommand{\yy}{\mathtt{src}}

\newcommand{\ip}[2]{\langle #1 , #2 \rangle }
\renewcommand{\AA}{\mathbb{A}}

\newcommand{\DD}{\mathbb{D}}

\newcommand{\hyp}{\mathbb{H}}

\newcommand{\NN}{\mathbb{N}}

\newcommand{\RR}{\mathbb{R}}

\newcommand{\Span}{\mathrm{Span}}

\newcommand{\conv}{\mathrm{Conv}}

\newcommand{\abs}[1]{|#1|}
\newcommand{\fund}{D} 

\newcommand{\cC}{\mathcal{C}}

\newcommand{\cS}{\mathcal{S}}

\newcommand{\PH}{\mathrm{PH}}
\newcommand{\cw}{\mathrm{cw}}
\newcommand{\arcc}{\mathrm{arc}}
\newcommand{\resroots}{\widehat{\Phi}}
\newcommand{\pgon}{P}
\newcommand{\polar}{P^\diamond}

\DeclareFontFamily{U}{mathx}{}
\DeclareFontShape{U}{mathx}{m}{n}{<-> mathx10}{}
\DeclareSymbolFont{mathx}{U}{mathx}{m}{n}
\DeclareMathAccent{\widehat}{0}{mathx}{"70}
\DeclareMathAccent{\widecheck}{0}{mathx}{"71}

\newcommand{\dfn}[1]{\textcolor{lavender}{\emph{#1}}}

\begin{document}

\title[Extended weak order]{Extended weak order in all rank three Coxeter groups}
\subjclass[2020]{}

\author[]{Grant Barkley} \address[GB]{Department of Mathematics,
  University of Michigan, Ann Arbor, MI 48109, USA}
\email{gbarkley@umich.edu}

\author[]{Colin Defant} \address[CD]{Department of Mathematics,
  Harvard University, Cambridge, MA 02138, USA}
\email{colindefant@gmail.com}

\author[]{Patricia Hersh} \address[PH]{Department of Mathematics,
  University of Oregon, Eugene, OR 97403, USA}
\email{plhersh@uoregon.edu}

\author[]{Jon McCammond} \address[JM]{Department of Mathematics,
  University of California, Santa Barbara, CA 93106, USA}
\email{jon.mccammond@math.ucsb.edu}

\author[]{Thomas McConville} \address[TM]{Department of Mathematics,
  Kennesaw State University, Marietta, GA 30060, USA}
\email{tmcconvi@kennesaw.edu}

\author[]{David E Speyer} \address[DES]{Department of Mathematics,
  University of Michigan, Ann Arbor, MI 48109, USA}
\email{speyer@umich.edu} 

\begin{abstract} 
  We prove that extended weak order is a lattice for all rank~$3$ 
  Coxeter groups.  This was previously known in finite and affine 
  types, and in the universal Coxeter group of rank $3$. All other 
  cases are new to this paper. As in earlier work, the proof proceeds 
  by showing that all rank 3 Coxeter arrangements are clean arrangements, 
  meaning that every biclosed set is weakly separated from its 
  complement by a hyperplane. As a consequence, we also deduce Dyer's 
  Conjecture A for all rank 3 Coxeter groups, which asserts that cover 
  relations in extended weak order are given by toggling a single root.
\end{abstract}

\maketitle

\section{Introduction}\label{sec:intro}
Let $(W,S)$ be a Coxeter system. Much of the structure of this 
system is most naturally seen through the associated root system 
$\Phi$. The roots in $\Phi$ record the reflecting hyperplanes, 
organize the action of the group on its standard linear representation, 
and provide a bridge between the combinatorics of words and the 
geometry of arrangements.  

Each element of $W$ has an inversion set, which is a subset of 
the set $\Phi^+$ of positive roots. The \dfn{weak order} is the 
partial order on $W$ defined by containment of inversion sets. This 
order is especially well behaved when $W$ is finite; in particular, 
it is a lattice. This lattice structure is fundamental in Coxeter 
combinatorics. For example, it underlies the classical Garside structure 
on spherical Artin groups \cite{BS72}. It is also the starting point 
for lattice quotients such as Cambrian and Tamari lattices, which 
connect Coxeter groups to generalized associahedra and cluster 
combinatorics \cite{Reading06,ReadingSpeyer09,Reading12}. For infinite 
Coxeter groups, however, the situation changes.  Bj\"orner proved that 
weak order is always a meet-semilattice \cite{Bjorner84}, but it does 
not have joins in general. 

Dyer introduced the extended weak order as a root-theoretic enlargement 
of weak order, motivated in part by questions arising from Kazhdan--Lusztig 
theory \cite{dyer:1994quotients,dyer:2019weak}.  A subset $B\subseteq 
\Phi^+$ is called \dfn{closed} if, whenever $\alpha,\beta\in B$, every 
positive root in the nonnegative span of $\alpha$ and $\beta$ also lies 
in $B$.  We say $B$ is \dfn{biclosed} if both $B$ and $\Phi^+\setminus B$ 
are closed.  The \dfn{extended weak order} is the collection of biclosed 
subsets of $\Phi^+$, ordered by inclusion.  The map sending each element of 
$W$ to its inversion set embeds the ordinary weak order as an order ideal 
in this larger poset. 

We are primarily interested in a conjecture of Dyer, which states that the 
extended weak order is a lattice for every Coxeter group. If true, this 
lattice structure could provide a mechanism for extending some of the notable 
consequences of the lattice property in finite types to infinite types. For 
example, Barkley and Speyer proved Dyer's conjecture in all affine types 
\cite{barkley:2023affine}, and there has now been subsequent work developing 
lattice quotients of extended weak order in this setting 
\cite{BarkleyThesis, BarkleyDefant25}. 

Dyer's conjecture is known in several important cases.  In finite type, the
extended weak order coincides with the usual weak order, so the conjecture
follows from the classical lattice property.  Wang proved the conjecture for
affine Coxeter groups of rank $3$ \cite{wang:2019infinite}, and Barkley and 
Speyer proved it for all affine Coxeter groups \cite{barkley:2023affine}.  
In our previous paper \cite{BDHMMS25}, we proved Dyer's conjecture for $U_3$, 
the universal Coxeter group of rank $3$.  This was the first non-spherical, 
non-affine Coxeter group for which Dyer's conjecture was proven.  More precisely, 
we proved that every biclosed set of positive roots for $U_3$ is weakly separable, 
and we used this to show that the extended weak order for $U_3$ is a lattice.  
We also established a refined description of joins in that lattice.

The present paper establishes Dyer's conjecture for all Coxeter groups of 
rank $3$.  In fact, for these groups, we prove the stronger fact that biclosed 
sets are weakly separable. 

\begin{theorem}\label{thm:main}
  Let $(W,S)$ be a Coxeter system of rank $3$, and let $\Phi^+$ be its set of 
  positive roots.  Every biclosed subset of $\Phi^+$ is weakly separable.
\end{theorem}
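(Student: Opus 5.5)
We reduce \cref{thm:main} to a case analysis on the three Coxeter labels $m_{12},m_{13},m_{23}\in\{2,3,\dots,\infty\}$. Finite types are immediate, since every biclosed set is an inversion set and hence separated from its complement by a generic hyperplane. Affine types are covered by \cite{barkley:2023affine}. We may therefore assume $W$ is hyperbolic or of higher indefinite type. In that case the roots lie in a $3$-dimensional space with a form of signature $(2,1)$. We projectivize and work in the disk model: positive roots become points (or, dually, the lines they define), lying outside or on the boundary circle $\partial\mathbb{D}$, and $\Phi^+$ sits in an affine chart. Weak separability of $B$ means there is a linear functional, equivalently a line $L$ in the projective plane, such that the roots of $B$ lie weakly on one side of $L$, the roots of $\Phi^+\setminus B$ lie weakly on the other, and the ambiguity occurs only on $L$ itself. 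I would follow the template of \cite{BDHMMS25} for $U_3$. First, classify the accumulation behaviour of the normalized roots, whose limit set is contained in the isotropic conic. Then show that every biclosed set is determined, up to a controlled finite or "boundary" ambiguity, by a cut of the limit set.

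The key steps are as follows.

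(1) Reduce to rank-$2$ closure conditions. In rank $3$, closedness need only be checked on the dihedral reflection subgroups, that is, on the lines through pairs of roots. On each such line, the positive roots are ordered and biclosedness forces an initial/final segment structure, as in the dihedral case.

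(2) For a biclosed $B$, consider the limit set $\Lambda\subset\partial\mathbb{D}$ and record which limit points are approached by roots of $B$ and which by roots of the complement. Using the dihedral condition on lines through pairs of roots accumulating at a limit point, show that the "$B$-limit points" and "complement-limit points" form two arcs of $\Lambda$. These arcs meet in at most two transition points.

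(3) Choose a candidate separating line $L$ through the two transition points, or tangent at one if they coincide. Verify that every root of $B$ lies weakly on one side. For any root on the wrong side, produce a pair of roots whose positive span is forced, by closure of $B$ or of its complement, to contain a contradiction. This uses the fact that in indefinite rank $3$ every line through two roots meeting $\mathbb{D}$ carries infinitely many roots accumulating at $\partial\mathbb{D}$.

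(4) Handle the degenerate cases separately: labels $m_{ij}=\infty$ (tangent lines, parabolic points) and labels where some rank-$2$ parabolic is affine. In these cases the transition can occur at a parabolic fixed point, and $L$ must be chosen as a tangent.

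The main obstacle is step (3) near the transition points, especially when $\Lambda$ is a Cantor set rather than the whole circle, which happens when the group is not cocompact. There, gaps of $\Lambda$ correspond to infinite or affine parabolic subgroups, and within each gap the biclosed set restricted to that parabolic can be an arbitrary rank-$2$ biclosed set. I would try first to show that at most one gap of $\Lambda$ can contain a "genuine" cut of this kind: two such cuts would give, via a line through roots from the two gaps, a violation of closure. The separating line can then be taken through that gap's endpoints, allowing roots on $L$ to be assigned arbitrarily, which is exactly the weak separation allowance.
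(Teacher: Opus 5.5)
Step (2) is where your proposal has a genuine gap. It is asserted rather than proved, and it is the entire difficulty: it is the paper's \cref{boundary-coloring-main}. The mechanism you suggest, dihedral closure on lines through roots accumulating at a limit point, produces no contradiction by itself. Closure only relates three collinear roots $\widehat\beta\in[\widehat\alpha,\widehat\gamma]$. Suppose $x_1,x_2,x_3,x_4\in\partial\DD$ are red, blue, red, blue in cyclic order. The segments joining red roots near $x_1,x_3$ and blue roots near $x_2,x_4$ meet only inside $\DD$, where there are no roots. A segment joining roots of different colors merely locates the cut in a rank-$2$ order.

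The paper closes this step with a global, planar argument whose key ingredient is the Polygon Lemma (\cref{lem:polygon-lemma}). It says that for any root polygon $\pgon$ (a convex hull of finitely many rescaled roots containing $\DD$), the colors on $\partial\pgon$ change at most twice. Its proof has three parts.
\begin{enumerate}
\item Pass to the polar dual $\polar\subset\DD$, which meets only finitely many chambers.
\item Draw a snake nest: curves in the bichromatic faces of the Cayley graph separating red edges from blue ones. These are chosen to cross $\partial\polar$ only at corners, and exactly at those corners where consecutive vertices of $\pgon$ differ in color.
\item Show that snakes can enter $\polar$ but never leave it. Hence each crossing snake must end at $v_{\id}$, where only two snake segments terminate. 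This endpoint analysis rests on \cref{lem:identity}, i.e.\ on the $W$-action preserving parabolic biclosedness.
\end{enumerate}
A party-hat and compactness argument then turns four alternating boundary points into a root polygon. Its vertices include roots of those colors near the four points, in the same cyclic order, which contradicts the Polygon Lemma.

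Your diagnosis of the main obstacle is also off. With $\ip{\alpha_i}{\alpha_j}=-1$ for $m_{ij}=\infty$, the fundamental triangle has finite area even with ideal vertices, so the limit set is all of $\partial\DD$ (\cref{accumulationpoints}). Non-cocompactness produces cusps, not gaps: these are tangent lines carrying affine rank-$2$ parabolic subsystems (\cref{lem:cusps,prop:finite_visibility}). Gaps occur only for realizations with some $\ip{\alpha_i}{\alpha_j}<-1$. You can avoid those, because both biclosedness and rank-$3$ weak separability are independent of the realization (\cref{independence2}). Your proposed remedy also fails as stated in such realizations. A generic line meeting $\partial\DD$ inside two different gaps defines a separable set that cuts both corresponding parabolic subsystems between their two ends.

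Granting step (2), your step (3) is sound; it is essentially \cref{lem:two-purple}. The same squeeze even handles a single transition point $p$ directly. Take $\widehat\beta$ strictly beyond the tangent at $p$ and a blue root $\widehat b$ near $p$. The line through them carries red roots near its second boundary point, and $\widehat b$ lies between those roots and $\widehat\beta$, which forces $\beta\in B$. The paper instead handles this case with a filtration by finite inversion sets and \cite{HohlwegLabbe}. Finally, reducible groups such as $\Cox(2,2,\infty)$ need separate treatment, by the direct-sum argument.
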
 

Since the poset of weakly separable sets in a rank $3$ root system is known 
to be a lattice \cite{LabbeThesis,barkley:2023affine},  \cref{thm:main} 
immediately implies: 

\begin{theorem}\label{thm:main2}
  Let $(W,S)$ be a Coxeter system of rank $3$, and let $\Phi^+$ be its set 
  of positive roots. The poset of biclosed sets in $\Phi^+$ is a lattice.
\end{theorem}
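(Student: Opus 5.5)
The statement to prove is \cref{thm:main2}, and the proof is a direct deduction: the only content beyond \cref{thm:main} is the identification of two posets. Write $\mathcal{B}$ for the poset of biclosed subsets of $\Phi^+$ and $\mathcal{S}$ for the poset of weakly separable subsets, both ordered by inclusion. The plan is to show $\mathcal{B}=\mathcal{S}$ as sets. Since both carry the inclusion order, this makes them equal as posets, and the lattice property of $\mathcal{S}$ for rank $3$ root systems \cite{LabbeThesis,barkley:2023affine} then transfers verbatim to $\mathcal{B}$.

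The inclusion $\mathcal{B}\subseteq\mathcal{S}$ is exactly \cref{thm:main}. For the reverse inclusion, take $B$ weakly separable. By definition there is a hyperplane, given by a linear functional $f$, that weakly separates $B$ from $\Phi^+\setminus B$: we have $f\ge 0$ on $B$ and $f\le 0$ on the complement, with the boundary roots on $\ker f$ distributed according to whatever lower-dimensional separability condition the definition imposes.

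We then check that $B$ is closed. Let $\alpha,\beta\in B$ and let $\gamma=a\alpha+b\beta$ be a positive root with $a,b\ge 0$. Then $f(\gamma)\ge 0$. If $f(\gamma)>0$, then $\gamma$ cannot lie in the complement, so $\gamma\in B$. If $f(\gamma)=0$, then either $a$ or $b$ vanishes, in which case $\gamma$ is $\alpha$ or $\beta$ and lies in $B$, or else $f(\alpha)=f(\beta)=0$. In that last case all three roots lie in $\ker f$, and the same argument is run on the induced separation inside $\ker f$.

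In rank $3$ this recursion has depth at most two. The terminal case is a line, and there any two positive roots are proportional, so closure is trivial. The argument for the complement is symmetric, with $-f$ in place of $f$. Hence weakly separable sets are biclosed, which gives $\mathcal{S}\subseteq\mathcal{B}$ and completes the proof.

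The only potential obstacle is matching the precise definition of weak separability used in \cite{LabbeThesis,barkley:2023affine}. If that definition is the flag-of-hyperplanes version, the recursion above handles it. If the cited lattice result is stated only for the poset of such sets, the equality $\mathcal{B}=\mathcal{S}$ still suffices, since it is an equality of posets.
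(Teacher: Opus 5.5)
Your proposal is correct and follows the paper's route. The paper deduces this theorem immediately from \cref{thm:main} together with the lattice property of weakly separable sets in rank $3$ \cite{LabbeThesis,barkley:2023affine}, relying on the inclusion ``weakly separable implies biclosed,'' which the paper says follows from the definitions and which you verify explicitly by recursing into $\ker f$. (In the body, \cref{cor:lattice} also uses that a directed union of biclosed sets is biclosed, in order to get a complete lattice; that step is not needed for the statement as given.)
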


\cref{thm:main,thm:main2} appear in the main body of the paper as 
\Cref{thm:separable} and \Cref{cor:lattice}.

Let us note that in ranks greater than 3, weak separability and biclosedness 
can be genuinely different conditions. See, for example, 
\cite[Example 6.13]{barkley:2024combinatorial}. Also, it is known that in 
ranks greater than 3, weakly separable sets do not form a lattice, even though 
this is not currently in the literature. For example, the join computed 
in \cite[Example 6.13]{barkley:2024combinatorial} does not exist in the poset 
of weakly separable sets.

We now sketch the proof of \Cref{thm:main}, which occupies the main body of 
the paper. Let $B \subseteq \Phi^+$ be a biclosed set and put $R = 
\Phi^+ \setminus B$. The cases where $B$ or $R$ are finite  are already known, 
so we focus on the case where $B$ and $R$ are both infinite. We visualize $B$ 
and $R$ as collections of dots in a plane, colored blue and red, and our job is 
to draw a line that separates one color of dot from the other. We ignore the 
issue of dots on the line for the sake of this sketch.

The set of accumulation points of our dots forms an ellipse, the boundary of 
the Tits cone. We color each point of this ellipse either red, blue or purple, 
according to whether it is an accumulation point of red dots, of blue dots, or 
of both. Our key lemma, \Cref{boundary-coloring-main}, is that the boundary is 
separated into two arcs, one of which is red and one of which is blue, with the 
boundary points being purple. Our dividing line is then the line through the two 
purple points. 

\cref{fig:ProofMethod} illustrates this method: We have depicted all roots of 
depth $\leq 15$ for the triangle group $(2,3,7)$ and colored them red and blue 
in a biclosed manner. The accumulation points of the roots form an ellipse, and 
the coloring of the roots gives rise to a coloring of the ellipse with a single 
red arc and a single blue arc. The straight line passes through the endpoints of 
those arcs, and separates the red roots from the blue roots.

Thus far, our summary also describes the proof in our previous paper~\cite{BDHMMS25}.
We now explain what new ideas were needed in this paper. In the previous paper, 
we defined a bi-infinite path, called the snake, through the Cayley graph of our 
Coxeter group, and we showed that the endpoints of this snake approach the two 
purple points in \Cref{boundary-coloring-main}. In the current setting, we were 
unable to build a single snake with this property.

Instead, we identify a discrete property of our coloring that implies 
\Cref{boundary-coloring-main}: We introduce a certain sort of convex polygon with 
vertices at the roots, which we term a \emph{root polygon}. The Polygon Lemma  
(\Cref{lem:polygon-lemma}) shows that, when we color the vertices of a root polygon 
according to $R$ and $B$, the boundary of the polygon is divided into two monochromatic 
arcs. We can then approximate the boundary of the Tits cone by appropriate polygons 
in order to prove \Cref{boundary-coloring-main}.

In order to prove the Polygon Lemma, we use a variant of our snake construction 
from~\cite{BDHMMS25}, but we use a snake specifically adapted to the polygon in 
question, rather than trying to find a single snake that works for the whole proof. 
Thus, the main new technical tools in this paper are the introduction of root polygons 
in \Cref{sec:rootpolygon} and the Snake Nest construction from \Cref{sec:polygonlemma}, 
which is used to prove the Polygon Lemma in the same section. \Cref{sec:boundarycolor} 
contains the limiting argument, \Cref{sec:FinishProof} completes the proof of the main 
theorem, and \Cref{sec:Consequences} spells out the consequences of the main theorem. 

We note that most of our results from \cref{sec:rootpolygon,sec:polygonlemma,sec:boundarycolor} 
apply more generally to the \emph{parabolic biclosed sets} introduced in \cite{BDHMMS25}; 
see \Cref{dfn:parabolic_biclosed}. It is only in the last stages of the proof that we need 
to use the full strength of the biclosed property.

\begin{figure}[t]
  \centering 
  \includestandalone{rootFig}
  \caption{The roots of the $(2,3,7)$ Coxeter group, colored in a biclosed fashion, 
  and the line that separates them.  The convex hull of the roots is a triangle 
  whose extremal vertices are the simple roots.  They surround the disk $\DD$,
  which in this affine patch is an ellipse.} 
  \label{fig:ProofMethod}
\end{figure}

\section{Background: Rank $3$ Coxeter groups and the hyperbolic
plane}\label{sec:background_geometry}
This section highlights how almost all rank~$3$ Coxeter groups act 
on the hyperbolic plane and it establishes notation for the remainder 
of the article.  The general approach is analogous to that taken in our 
earlier work \cite{BDHMMS25}, but modified to allow for arbitrary rank~$3$ 
Coxeter groups.

\begin{definition}[Rank $3$ Coxeter Groups]\label{def:rank3}
  The \dfn{rank~$3$ Coxeter group} $\Cox(p,q,r)$ is the group defined 
  by the presentation
  \[ 
  \Cox(p,q,r) = \langle s_1, s_2, s_3 \mid s_1^2, s_2^2, s_3^2, (s_1 s_2)^p, 
  (s_1 s_3)^q, (s_2 s_3)^r \rangle
  \]
  with $p,q,r \in \{2,3,\ldots\} \cup \{\infty\}$.  When one or more of 
  $p,q,r$ is $\infty$, no relation is added, and to avoid redundancies we 
  assume $p \leq q \leq r$.  For example, the Coxeter group $\Cox(2,3,\infty)$ 
  has presentation 
  \[
  \Cox(2,3,\infty)=\langle s_1,s_2,s_3 \mid s_1^2, s_2^2, s_3^2, (s_1 s_2)^2, 
  (s_1 s_3)^3\rangle.
  \]     
  When $p=q=2$, the group $\Cox(2,2,r)$ is \dfn{reducible} in the sense 
  that it splits as a direct product of a rank~$1$ Coxeter group generated by 
  $s_1$ and a rank~$2$ Coxeter group generated by $s_2$ and $s_3$.  The other 
  rank~$3$ Coxeter groups are \dfn{irreducible}. 
\end{definition}

\begin{definition}[Parabolic Subgroups]
  Let $W$ be a Coxeter group with generating set $S$.  A \dfn{standard parabolic 
  subgroup} of $W$ is $\langle S' \rangle$, the subgroup generated by $S' \subset S$.
  A \dfn{parabolic coset} is a left coset $w\cdot \langle S'\rangle$ of a standard 
  parabolic subgroup, and a (general) \dfn{parabolic subgroup} is a conjugate 
  subgroup $w\cdot \langle S'\rangle \cdot w^{-1}$. For an irreducible rank~$3$ 
  Coxeter group such as $\Cox(p,q,r)$, there are seven proper standard parabolic 
  subgroups corresponding to the seven proper subsets of $\{s_1,s_2,s_3\}$, including 
  $\emptyset$. For the empty set, the standard parabolic is the trivial subgroup and 
  its left cosets are the individual elements of $W$.
\end{definition}

\begin{definition}[The Cayley graph] \label{def:Cayley}
  The (right) \dfn{Cayley graph} of the group $W = \Cox(p,q,r)$ with 
  generating set $S = \{s_1, s_2, s_3\}$ is a graph $\Gamma=\Cay(W,S)$ 
  with vertices $v_w$ indexed by $w \in W$ and directed edges $e_{w,s}$ 
  from $v_w$ to $v_{w\cdot s}$ indexed by $W \times S$.  The \dfn{label} 
  of the edge is the generator $s\in S$ and its \dfn{endpoints} are 
  $\{w,w\cdot s\}$, which is the same as $w \langle s\rangle$ since the 
  generators are involutions.  The \dfn{length} of an element in $W$ is 
  the length of the shortest word for $w$ in the generators $s_1$, $s_2$, 
  $s_3$, and written $\ell(w)$. Edges of the Cayley graph always join 
  group elements whose lengths differ by $\pm 1$, and we often draw these 
  edges oriented from the element of lesser length to the element of greater 
  length. 
\end{definition}

Although $p,q,r$ are allowed to be infinity, on first reading readers may 
wish to assume that all three parameters are finite since this simplifies 
several technical details. For example, the situation described in 
\cref{lem:cusps} is impossible when $p$, $q$ and $r$ are finite, and we will 
not need to use the double party hats introduced in \cref{def:doublepartyhat}.
Irreducible rank~$3$ Coxeter groups act on triangular tilings in constant 
curvature spaces.

\begin{definition}[Triangular Tilings]\label{def:tritile}
  Every irreducible Coxeter group $\Cox(p,q,r)$ acts on a space of constant 
  curvature depending on the value of $m = \frac{1}{p} + \frac{1}{q} + \frac{1}{r}$.
  When $m$ is greater than, equal to, or less than $1$, there is an essentially 
  unique spherical, Euclidean, or hyperbolic triangle with angles $\frac{\pi}{p}$, 
  $\frac{\pi}{q}$, and $\frac{\pi}{r}$, and reflections in the sides of this 
  triangle generate a \dfn{reflection group} isomorphic to $\Cox(p,q,r)$, and 
  it subdivides the space into a \dfn{triangular tiling}.  See \cref{fig:Klein} 
  for the triangular tiling for the reflection group $\Cox(2,3,7)$ shown in the 
  Klein disk model of the hyperbolic plane.\footnote{\cref{fig:Klein} does not 
  use the affine patch from \cref{def:klein} and, also, some of the roots are 
  off the page. Indeed, if a hyperplane passes through the exact center $O$ of 
  the disk, its root is in the plane $O^\bot$, and the rescaled root is ``at 
  infinity". \Cref{fig:ProofMethod} depicts the same root system in a different 
  affine patch; the circle in \cref{fig:Klein} is the interior of the ellipse in 
  \Cref{fig:ProofMethod}.}
\end{definition}

\begin{figure}
  \includestandalone{Kleinfigure}
  \caption{The triangular tiling for the reflection group $\Cox(2,3,7)$ in the 
  Klein model of the hyperbolic plane $\DD$. A portion of the root system $\Phi$ 
  is shown as well. Some roots are off the page, and hyperplanes dual to those 
  roots are shown in gray.}\label{fig:Klein}
\end{figure}
 
\begin{remark}[Mostly Hyperbolic]\label{rem:mostlyhyperbolic}
  The irreducible rank~$3$ Coxeter groups $\Cox(p,q,r)$ are spherical 
  when $(p,q,r) = (2,3,k)$ where $k=3,4,5$, and Euclidean when $(p,q,r) 
  = (3,3,3)$, $(2,4,4)$ or $(2,3,6)$. The former are the Coxeter groups 
  of type $A_3$, $B_3$ and $H_3$, and the latter are the Coxeter groups 
  of type $\widetilde A_2$, $\widetilde B_2 = \widetilde C_2$, and 
  $\widetilde G_2$.  In all other cases $\Cox(p,q,r)$ is hyperbolic.
  The \dfn{universal rank~$3$ Coxeter group} studied in our previous 
  article~\cite{BDHMMS25} is $U_3 = \Cox(\infty,\infty,\infty)$.
\end{remark}

At this point we exclude the $3$~spherical cases and the 
$3$~Euclidean cases to focus exclusively on the irreducible hyperbolic 
rank~$3$ Coxeter groups and how they act on the hyperbolic plane.

\begin{definition}[Hyperbolic Plane]\label{def:hyp-plane}
  Let $V$ be a $3$-dimensional real vector space equipped with a quadratic 
  form of signature $(+,+,-)$. The corresponding \dfn{bilinear form} is 
  $\ip{x}{y} = \frac12 (Q(x+y)-Q(x)-Q(y))$.  The value $Q(x) = \ip{x}{x}$
  is the \dfn{norm} of $x$.  Let $O(2,1)$ be the \dfn{indefinite orthogonal 
  group} of linear transformations of $V$ that preserve the bilinear form.   
  The double cone of norm $0$ vectors is the \dfn{isotropic cone}. The 
  vectors on the lines inside the isotropic cone have negative norm and 
  those outside the isotropic have positive norm.     
    
  The quadratic form $Q$ defines a hyperboloid of two sheets $\{x \mid Q(x)=-1 \}$. 
  If the quadratic form is diagonalized as $Q(x_1, x_2, x_3) = x_1^2+x_2^2-x_3^2$, 
  then one of those sheets has $x_3 \geq 1$ and the other has $x_3 \leq -1$. 
  Either of those sheets forms a model of the hyperbolic plane, called the 
  \dfn{hyperboloid model} and denoted $\hyp^2$. The points of $\hyp^2$ give 
  unique representatives for the negative norm lines through the origin.
  The  essentially unique $O(2,1)$-invariant metric on $\hyp^2$ is given 
  by the formula
  \[
  d(x,y) = \cosh^{-1} \abs{\ip{x}{y}}.
  \]    
\end{definition}

The symmetries of the hyperbolic plane are generated by reflections in 
positive norm vectors.

\begin{definition}[Reflections]
  A \dfn{reflection} is a linear transformation of order $2$ that fixes 
  a hyperplane pointwise.  For a vector $v$ with $Q(v)>0$, we define a 
  reflection $r_v\colon V\to V$ by 
  \[
    r_v(x) = x - 2 \frac{\ip{v}{x}}{\ip{v}{v}}v.
  \]
  This fixes the plane $H_v = \{x \colon \ip{v}{x}=0\}$ pointwise and 
  swaps the two open halfspaces $H_v^+ := \{x \colon \ip{v}{x}>0\}$ and  
  $H_v^- := \{x \colon \ip{v}{x}<0\}$. We will write $\overline{H^+}$ and 
  $\overline{H^-}$ for the corresponding closed halfspaces.
  Reflections in negative norm vectors also exist, but they are not relevant 
  since they swap the two sheets of the hyperboloid $\{ x \mid Q(x)=-1 \}$. 
\end{definition}

The representation of $\Cox(p,q,r)$ as a reflection group in $O(2,1)$ 
starts by selecting simple roots.

\begin{definition}[Roots and the fundamental chamber]\label{def:roots}
  If $p$, $q$, and $r$ define a hyperbolic rank $3$ Coxeter group, then 
  there exist vectors $\Delta = \{\alpha_1, \alpha_2, \alpha_3\}$ in 
  $\RR^{2,1}$, such that
  \[ 
  \langle \alpha_i,\alpha_j\rangle = 
  \begin{cases}
     1\ &\mbox{if }i=j\\
     -\cos(\frac{\pi}{p})\ &\mbox{if }\{i,j\}=\{1,2\}\\
     -\cos(\frac{\pi}{q})\ &\mbox{if }\{i,j\}=\{1,3\}\\
     -\cos(\frac{\pi}{r})\ &\mbox{if }\{i,j\}=\{2,3\}\\
  \end{cases}.
  \]
  Here $-\cos \tfrac{\pi}{\infty}$ is treated as $-1$; see 
  \cref{rem:finite-area}. The vectors $\alpha_i$ are the 
  \dfn{simple roots} of $\Cox(p,q,r)$ and the reflections $r_{\alpha_i}$
  generate a subgroup of $O(2,1)$ isomorphic to the Coxeter group $\Cox(p,q,r)$, 
  with $s_i$ acting by $r_{\alpha_i}$.  The orbit of $\Delta := \{\alpha_i\}$ 
  under the action of $\Cox(p,q,r)$ is the \dfn{root system} $\Phi$.  
  Note that if $\Phi' \subset \Phi$ is the subset of roots of the standard 
  parabolic subgroup $\langle S'\rangle$, then $w \cdot \Phi'$ is the 
  subset of roots of the parabolic subgroup $w\cdot \langle S'\rangle \cdot w^{-1}$ 
  with $r_{w\cdot \alpha_i} = w\cdot r_{\alpha_i} \cdot w^{-1}$.

  The simplicial cone $H_{\alpha_1}^- \cap H_{\alpha_2}^- \cap H_{\alpha_3}^-$ 
  intersects one of the two sheets of the hyperboloid 
  $\{ x : \langle x,x \rangle = -1 \}$. 
  We choose our hyperboloid model $\hyp^2$ to be the sheet that intersects this cone.
  The \dfn{fundamental chamber}, $\fund$ is $\overline{H_{\alpha_1}^-} \cap
  \overline{H_{\alpha_2}^-} \cap \overline{H_{\alpha_3}^-} \cap \hyp^2$.  
  This is a (possibly ideal) finite area hyperbolic triangle. The triangles 
  $w \cdot \fund$, 
  for $w \in \Cox(p,q,r)$, form the triangular tiling discussed in \Cref{def:tritile}, 
  with a distinct triangle for each $w \in \Cox(p,q,r)$. 
   
  There is a unique point $O$ in the fundamental chamber with 
  $\langle \alpha_1, O \rangle = \langle \alpha_2, O \rangle = 
  \langle \alpha_3, O \rangle$; this common value is negative. 
  (Geometrically, $O$ is the incenter of the hyperbolic triangle 
  $\fund$.) The roots in $\Phi$ with negative/positive 
  values of $\langle O, \ \rangle$ are the \dfn{positive/negative roots}. 
  The sets of positive and negative roots are denoted $\Phi^+$ and $\Phi^-$, 
  and we have $\Phi = \Phi^+ \sqcup \Phi^-$.  Every root $\alpha \in \Phi^+$ 
  is a non-negative linear combination of the simple roots $\alpha_i \in \Delta$.
\end{definition} 

As in the previous article \cite{BDHMMS25}, we prefer to work in the Klein model 
in an affine patch. Readers may be familiar with pictures of the hyperbolic plane 
where reflecting hyperplanes are drawn as arcs of circles orthogonal to the boundary 
of the hyperbolic plane, such as Escher's \emph{Circle Limit IV (Heaven and Hell)}.
These are drawn in a different model, the Poincar\`e model, which we do not use in 
this paper. In our paper we use the Klein model, so reflecting hyperplanes are drawn 
as line segments.

\begin{definition}[The affine model and the Klein model]\label{def:klein}
  The standard \dfn{affine patch} is the affine plane $\AA = \{x \mid \ip{x}{O}=-1\} 
  \subset V$. This affine plane is tangent to $\hyp^2$ at $O$ and the metric 
  on $\hyp^2$ equips $\AA$ with a Euclidean norm.  Let $\DD$ be the disk 
  $\{ x \in \AA \mid Q(x) \leq 0 \}$ in $\AA$; this is the closed unit disk 
  centered at $O$ for the norm on $\AA$. The isotropic cone intersects $\AA$ 
  in the unit circle $\partial \DD$ and the negative norm lines (through the origin of $V$) 
  intersect $\AA$ in the interior of the unit disk $\DD^\circ$.  For any vector 
  $x$ in $V$ not in the plane $O^{\perp}$, the line $\RR x$ intersects the 
  affine patch in a unique \dfn{rescaled} point $\widehat x = \RR x  \cap \AA$.
  
  Since the root system $\Phi$ avoids the plane $O^{\perp}$, there is a 
  well-defined $2$-to-$1$ map $\Phi \to \widehat \Phi$ that sends $\pm \alpha \in \Phi$ 
  to $\widehat \alpha \in \widehat \Phi \subset \AA$, where $\widehat{\Phi}$ is 
  the image of $\Phi$ under this map. Note that these \dfn{rescaled roots} 
  $\widehat{\alpha} \in \widehat\Phi$ are in bijection with the positive 
  roots $\Phi^+$ and with the reflections in $\Cox(p,q,r)$.  The rescaled roots 
  in $\widehat{\Phi} \subset \AA$ are outside the closed disk $\DD$ (because 
  they have positive norm) and inside the triangle with (rescaled) vertices 
  $\widehat{\alpha_i}$, $i=1,2,3$ (since $\alpha \in \Phi^+$ is a non-negative 
  linear combination of the simple roots $\alpha_i$).
\end{definition}

\begin{remark}[Other affine patches]
  When making figures, it is sometimes convenient to replace the affine plane 
  $\AA$ in \cref{def:klein} with a different affine plane or different metric 
  where $\partial \DD$ may be an ellipse rather than a circle. In particular, 
  we have done this in \cref{fig:ProofMethod,fig:Klein}. We always use the 
  standard affine patch in proofs and computations.
\end{remark}

\begin{definition}[Point-Line Duality]\label{def:pt-line}
  There is a natural bijection between points $x$ in $\AA$ and lines $\RR x$ 
  through the origin in $V$ that do not lie in the plane $O^{\perp}$.  Similarly 
  there is a natural bijection between lines $L$ in $\AA$ and planes $H = \RR L$ 
  through the origin in $V$ other than the plane $O^{\perp}$ itself.  These 
  bijections, together with the quadratic form, produce a bijection between 
  the points in $\AA \setminus \{O\}$ and the lines in $\AA$ that do not pass 
  through $O$. 
  
  Concretely, a point $x \in \AA \setminus \{O\}$ determines a linear subspace 
  $\RR x$, whose orthogonal complement is a linear subspace $H_x = (\RR x)^\bot$, 
  which intersects $\AA$ in a line $L_x = H_x \cap \AA$ that does not pass through $O$.  
  More intrinsically, for $x \in \AA \setminus \{O\}$ we can define $L_x$ 
  and its halfspaces as follows:
  \begin{align*}
    L_x &= \{y \in \AA \mid \ \langle x,y\rangle = 0\},\\
    L_x^+ &= \{y \in \AA \mid \langle x,y\rangle > 0\}, \mbox{and}\\
    L_x^- &= \{y \in \AA \mid \langle x,y\rangle < 0\}.
  \end{align*}
  Geometrically, if $x \in \AA$ is outside $\DD$, then $L_x$ intersects the circle 
  $\partial \DD$ at the points of tangency of the two tangents to $\partial \DD$ 
  through $x$. If $x$ is on $\partial \DD$, then $L_x$ is the tangent line to 
  $\partial \DD$ at $x$. If $x\in\DD^\circ\setminus\{O\}$, then $L_x$ is entirely 
  outside $\DD$; in this last case, $L_x$ is the so-called ``polar line" to $x$. 
  The reader may enjoy taking a root in \cref{fig:Klein}, drawing the two tangents 
  to the circle $\partial \DD$ through that point, and observing that the points 
  of tangency are the endpoints of a reflecting hyperplane.
\end{definition}

\begin{remark}[$W$-action]
  Since the Coxeter group $W = \Cox(p,q,r)$ acts on the lines through the origin in 
  $\RR^3$ preserving norms, there is an induced incomplete action of $W$ on the points 
  and lines in $\AA$. The action on $\AA$ is incomplete because lines in $O^\bot$
  are not represented (\Cref{def:pt-line}), and the $W$-action sends lines of positive 
  norm into and out of $O^\bot$.  There are, however, $W$-actions on the points 
  inside $\DD$, on the points in $\partial \DD$, and on the rescaled roots $\widehat \Phi$.
\end{remark}

\begin{remark}[Inversion Sets]\label{rem:inversion}
  Let $x = v_w = \widehat{w\cdot O}$ be the image in $\AA$ of the center 
  of a triangle in the triangular tiling, i.e. a vertex of the Cayley graph. 
  The finitely many positive roots $\alpha$ with $\langle x,\alpha\rangle>0$ 
  form the \dfn{inversion set} $\Inv(w)$ of $w$.  They also correspond to the 
  finitely many lines / hyperplanes that separate $O$ from $w\cdot O$.
\end{remark}

The closure of the image of the fundamental chamber in $\AA$ is a 
triangle whose interior is completely contained in $\DD^\circ$ and there is 
a vertex in $\partial \DD$ if and only if there is an $\infty$ in $\{p,q,r\}$. 
When infinities do occur, our choice of form means that we have a finite area 
ideal hyperbolic triangle as a fundamental domain.

\begin{remark}[Finite Area Ideal Triangles]\label{rem:finite-area}
  When one or more of the parameters is $\infty$, our choice of quadratic form
  ensures that the fundamental domain for the action of $\Cox(p,q,r)$ on 
  $\hyp^2$ remains a triangle with finite area, but with one or more vertices 
  in $\partial \DD$, so that the sides of the triangle that do not meet 
  in the hyperbolic plane, meet at an \dfn{ideal point} in the boundary.  
  This also means that the reflections in these sides generate an infinite 
  dihedral group that acts on the hyperbolic plane fixing this ideal point 
  in the boundary. 
  
  In more general studies of root systems, when $s_i s_j$ has infinite order, 
  $\ip{\alpha_i}{\alpha_j}$ is allowed to be any number $\leq -1$. If at 
  least one such pairing is strictly less than $-1$, the fundamental domain 
  contains a positive length arc of $\partial \DD$, rather than just 
  isolated points and it has infinite area. Dyer has proved that the truth 
  of his main conjectures does not depend on the choice of 
  $\ip{\alpha_i}{\alpha_j}$ for $s_i s_j$ of infinite order;
  see \cref{independence2}.  However, many of our lemmas would 
  fail if we choose a more negative value for $\ip{\alpha_i}{\alpha_j}$, 
  starting with \Cref{accumulationpoints}.
\end{remark}

\begin{definition}[The Cayley graph drawn in the hyperbolic plane]
  We defined the Cayley graph of $\Cox(p,q,r)$ in \Cref{def:Cayley}. 
  We can embed it concretely in $\hyp^2$: The vertex $w$ is drawn at 
  $w \cdot O$ and each edge of the Cayley graph is a hyperbolic line 
  segment. The Cayley graph is thus the dual graph of the triangular 
  tiling, and we have drawn each edge of the Cayley graph to be orthogonal 
  to the dual edge in the tiling. 

  We now discuss the regions into which $\DD$ is divided by the Cayley graph. 
  Let $\langle s_i, s_j \rangle$ be a rank two parabolic subgroup of $W$ where 
  $s_i s_j$ has order $k \in \{ p,q,r \}$; let $w \cdot \langle s_i, s_j \rangle$ 
  be a left-coset of $\langle s_i, s_j \rangle$. Then $\{ g \cdot O : g \in w 
  \langle s_i, s_j \rangle \}$ forms a cycle or path in the Cayley graph -- 
  a cycle if $k < \infty$ and a path if $k = \infty$. These paths and cycles 
  are the boundaries of the disjoint convex regions of $\DD$ with the Cayley 
  graph removed. We call these the \dfn{faces of the Cayley graph}.
  The cycles bound \dfn{finite faces} and infinite paths bound \dfn{infinite faces}. 
  The finite faces are regular hyperbolic $2k$-gons. The infinite faces have 
  their vertices on a horocycle, which limits to an ideal point of the triangulation.
  (Here we are using that $\ip{\alpha_i}{\alpha_j} = -1$.)

  Recall that we use the length function to orient the Cayley graph. 
  We discuss the restriction of this orientation to $\partial F$, for a face $F$. 
  If $F$ is a finite $2k$-gon, then $\partial F$ has a unique source and a unique sink, 
  at distance $k$ apart. If $F$ is an infinite face, then $\partial F$ 
  has a unique source and no sink. 
\end{definition}

\begin{remark}[Faces and Linearly Ordered Roots]\label{rem:face-lines}
  Note that the set of Cayley graph edges bounding a single face $F$ 
  correspond to rescaled positive roots that all lie on a single line.  When $F$ 
  is a $2k$-face, the hyperplanes dual to the edges all pass through the 
  (hyperbolic) center of the $2k$-gon, and the roots lie on the hyperplane 
  dual to the center.  This case is illustrated in \cref{fig:From_Last_Paper}. 
  
  When $F$ is an infinite face, the hyperplanes dual to the edges all pass 
  through the ideal point of the face, and the roots lie on the hyperplane 
  dual to this point.  In $\AA$, the rescaled roots lie on the line tangent 
  to $\partial \DD$ at the ideal point.
  This case is illustrated in \Cref{fig:infinite_rainbow}.
  
  Moreover, in both cases, the two edges $e_1,e_2$ of $\partial F$ incident 
  to the unique source vertex  correspond to a pair of rescaled roots that 
  contains all of the other roots in their convex hull. As one proceeds along 
  edges of $\partial F$ from $e_i$ for $i\in \{ 1,2\}$ towards the sink of 
  $\partial F$ (or the ideal point in $\partial \DD$), the series of associated 
  roots of the edges one encounters sequentially themselves  appear sequentially 
  along their common line, which is disjoint from $\DD$ for a finite face and 
  tangent to $\partial \DD$ for an infinite face. The two choices of $i$ yield 
  sequences of roots proceeding from opposite sides.  For an $\infty$-face, 
  both sequences limit to the ideal point.
\end{remark}

\begin{remark}[Rank $2$ Subsystems]\label{rem:rank-2}
  Any two reflections $r_\alpha$ and $r_\beta$ in $W=\Cox(p,q,r)$ generate a 
  finite or infinite dihedral subgroup $G$. Geometrically, $G$ can be classified 
  as \dfn{finite}, \dfn{affine}, or \dfn{hyperbolic type}, depending on whether 
  the line $L_G$ determined by the rescaled roots $\widehat \alpha$ and $\widehat \beta$ 
  of its generating reflections intersects $\partial \DD$ in zero, one or two 
  points, respectively.  The finite case has finitely many roots and no 
  accumulation points.  In the affine and hyperbolic types, the intersection 
  $L_G \cap \partial \DD$ is exactly the set of accumulation points of the 
  rescaled roots in $G$.  
\end{remark}

\begin{figure}
  \centering
  \includegraphics[height=9.5cm]{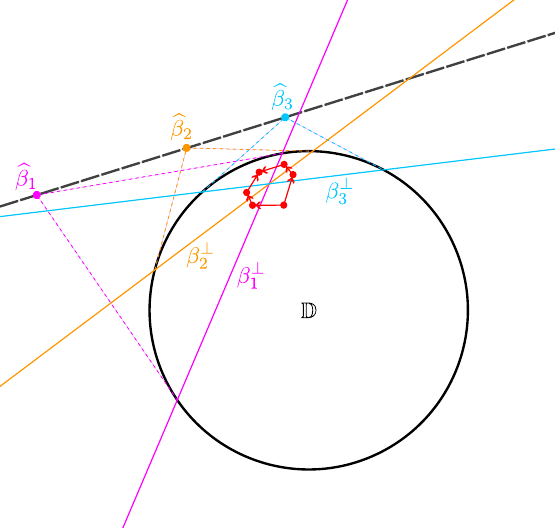}
  \caption{A hexagonal face of the Cayley graph and its three reflecting 
  lines. The reflecting lines meet at the hyperbolic center of the face, 
  and the corresponding rescaled roots lie on the polar line of this center. 
  The extreme rescaled roots, labeled $\beta_1$ and $\beta_3$, correspond 
  to the two edges incident to the source. }
  \label{fig:From_Last_Paper}
\end{figure} 

\begin{figure}
  \centering
  \includegraphics[height=10.512cm]{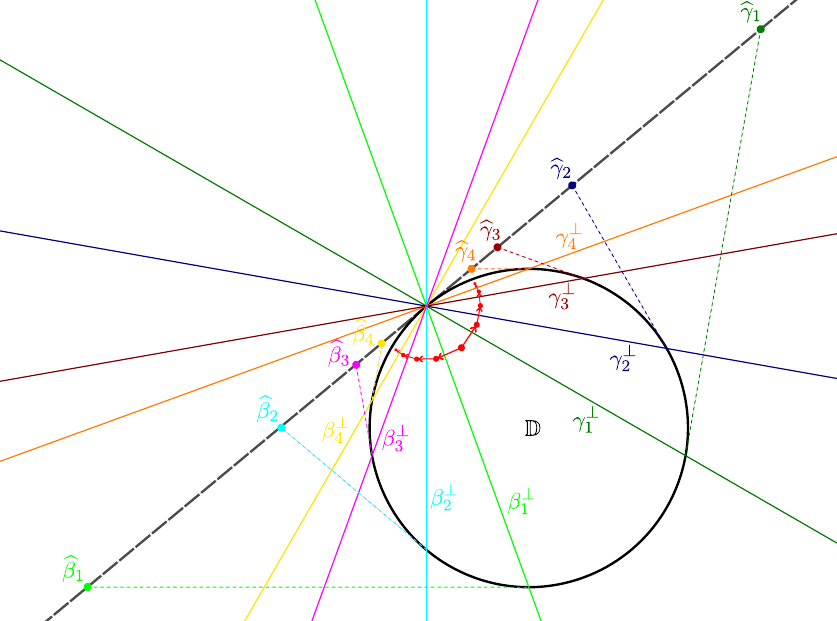}
  \caption{An infinite face of the Cayley graph and some of its reflecting 
  lines. The reflecting lines meet at the ideal point of the face, and the 
  corresponding rescaled roots, labeled $\beta_i$ and $\gamma_i$, lie on the 
  tangent line to $\partial\mathbb D$ at this point. The roots labeled 
  $\beta_1$ and $\gamma_1$ correspond to the two edges incident to the source. 
  Along the two boundary paths from the source toward the ideal point, the 
  associated rescaled roots converge to that point from opposite sides.}
  \label{fig:infinite_rainbow}
\end{figure}

The root system $\Phi$ is discrete as a subset of $V$, but the rescaled 
root system $\widehat \Phi$ in the affine patch $\AA$ contains limit points 
called \dfn{limit roots}. 

\begin{lemma}[Limit Roots]\label{accumulationpoints}
  The set $\widehat \Phi$ is discrete as a subset of $\AA \setminus \DD$, 
  but its closure in $\AA$ includes all of $\partial \DD$, i.e. every point 
  in $\partial \DD$ is the limit of points in $\widehat \Phi^+$ and these 
  are the only limit points.
\end{lemma}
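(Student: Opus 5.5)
We need to prove three things: $\widehat\Phi$ has no accumulation points in $\AA\setminus\DD$; every point of $\partial\DD$ is a limit of rescaled positive roots; and these are the only limit points. Since $\widehat\Phi$ lies in the compact triangle with vertices $\widehat{\alpha_1},\widehat{\alpha_2},\widehat{\alpha_3}$, the third claim follows from the first. So the plan is to establish discreteness away from $\DD$, and then density on $\partial\DD$.

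\emph{Discreteness.} Take a rescaled root $\widehat\alpha$ lying in a compact set $K\subset\AA\setminus\DD$. Then $\widehat\alpha = \alpha/c$ with $c=-\ip{\alpha}{O}>0$, so
\[
Q(\widehat\alpha)=\frac{1}{\ip{\alpha}{O}^2}.
\]
The function $Q$ is continuous and strictly positive on $K$, so it is bounded below there by some $\epsilon>0$. This forces $|\ip{\alpha}{O}|\le \epsilon^{-1/2}$. Roots with $|\ip{\alpha}{O}|$ bounded correspond to hyperplanes $H_\alpha$ within bounded hyperbolic distance of $O$: for a unit positive-norm vector $\alpha$, $|\ip{\alpha}{O}|=\sinh d(O,H_\alpha)$. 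By local finiteness of the tiling (the $W$-action on $\hyp^2$ is properly discontinuous), a ball around $O$ meets only finitely many walls. Hence only finitely many roots lie over $K$, which gives discreteness in $\AA\setminus\DD$.

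\emph{Density on $\partial\DD$.} Fix $\xi\in\partial\DD$ and a neighborhood $U$ of $\xi$ in $\AA$. I would use that the fundamental chamber $\fund$ has finite area, per \cref{rem:finite-area}; this is exactly where the choice $\ip{\alpha_i}{\alpha_j}=-1$ enters. Consequently the translates $w\cdot\fund$ tile $\hyp^2$ with uniformly bounded diameter, except near cusps. Take a geodesic ray from $O$ toward $\xi$; it crosses infinitely many walls.

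\begin{itemize}
\item If the ray eventually leaves every cusp region, then walls crossed far along the ray lie far from $O$. A wall far from $O$ meeting the ray near $\xi$ has both endpoints close to $\xi$: its Euclidean chord in the Klein disk is short and near $\xi$. By the tangent-line description in \cref{def:pt-line}, the dual rescaled root is then close to $\xi$, so it lies in $U$.
\item If $\xi$ is an ideal vertex of some translate $w\cdot\fund$, then the rank~$2$ subsystem of that infinite face is affine. By \cref{rem:rank-2} and \cref{rem:face-lines}, its rescaled roots accumulate at $\xi$.
\item If the ray enters cusp neighborhoods infinitely often without converging to a cusp point, the walls bounding those cusp regions along the ray still work as in the first case.
\end{itemize}

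To make this uniform, note that every point far along the ray lies in some tile, and that tile has a side within bounded distance (or is a cusp tile handled by the second case). The side's wall, extended to a full geodesic, must have small visual size from $O$ unless it passes near $O$, which finitely many walls do.

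\emph{Main obstacle.} The key step is converting ``hyperbolically far wall near the ray'' into ``rescaled root Euclidean-close to $\xi$.'' I would do this explicitly with the polar map. For a unit root $\alpha$ with $t=\ip{\alpha}{O}$, we have $\widehat\alpha = -\alpha/t$. Writing $\alpha = -tO + u$ with $u\perp O$ and $\|u\|^2 = 1+t^2$, this gives
\[
\widehat\alpha = O - \frac{u}{t}.
\]
So $\widehat\alpha$ approaches the boundary point in direction $u/\|u\|$ as $|t|\to\infty$, and that boundary point is the midpoint of the wall's endpoint arc. It remains to check that this direction is near $\xi$ whenever the wall comes near the ray; this is elementary Klein-disk geometry.
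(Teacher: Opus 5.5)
Your proposal is correct. The discreteness half matches the paper's proof, and the density half takes a different route: you prove it by hand where the paper cites it.

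\textbf{Discreteness.} Your argument is the paper's, written in coordinates. The paper notes that rescaling is a two-sheeted covering from $\{Q=1\}$, minus the unit circle of $O^\perp$, onto $\AA\setminus\DD$, and that $\Phi$ is discrete in $\{Q=1\}$. Your identity $Q(\widehat\alpha)=\ip{\alpha}{O}^{-2}$, together with $\abs{\ip{\alpha}{O}}=\sinh d(O,H_\alpha)$ and local finiteness of the walls, makes both the properness of that covering and the discreteness of $\Phi$ explicit. For the claim that $\partial\DD$ contains all the limit points, what you actually use is that $\DD^\circ$ is open and misses $\widehat\Phi$; the compact triangle plays no role.

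\textbf{Density on $\partial\DD$.} The paper simply cites Theorem~4.4 of Dyer--Hohlweg--Ripoll, using that $\fund$ has finite area. That is short and places the statement in a general framework. Your hand proof is self-contained and shows exactly where $\ip{\alpha_i}{\alpha_j}=-1$ is used. It does need tightening, because your opening claim that the ray $[O,\xi)$ always crosses infinitely many walls is false when $\xi$ is a cusp, and the ``cusp region''/uniformity discussion is vague. Replace it with the following dichotomy.

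\emph{The ray crosses only finitely many walls.} Then its tail lies in a single chamber $w\cdot\fund$. Since $\fund$ has finite area, its closure meets $\partial\DD$ only at ideal vertices, so $\xi$ is an ideal vertex. Then \cref{rem:face-lines} gives rescaled roots on $L_\xi$ accumulating at $\xi$. This is the only place finite area enters: with $\ip{\alpha_i}{\alpha_j}<-1$, the tail could sit in a chamber whose closure contains an arc of $\partial\DD$.

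\emph{The ray crosses infinitely many distinct walls.} Then local finiteness forces $\abs{t}=\abs{\ip{\alpha}{O}}\to\infty$ along them. Your remaining ``elementary Klein-disk geometry'' is one line. For a positive root ($t<0$), the wall $\widehat H_\alpha$ is the chord $\{O+y : y\in O^\perp,\ \ip{y}{u}=\abs{t}\}$, at distance $\abs{t}/\sqrt{1+t^2}$ from $O$. If it crosses the radius toward $\xi$, the angle between $\xi-O$ and $u$ is at most $\arccos\bigl(\abs{t}/\sqrt{1+t^2}\bigr)$. Since $\widehat\alpha=O+u/\abs{t}$, this gives $\widehat\alpha\to\xi$. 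The condition you need is that the wall \emph{crosses} the ray, not merely that it comes near it.
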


\begin{proof}
  Let $S = \{ x : Q(x)=1\}$ be hyperboloid of one sheet containing $\Phi$.
  The natural map from $\RR^3$ minus the plane $O^\bot$ to $\AA$ restricts to 
  a $2$-sheeted cover from $S$ minus the unit circle (in the $O^\bot$-plane) to 
  $\AA\setminus\DD$, and since $\Phi$ is a discrete subset of $S$, $\wh{\Phi}$ 
  is a discrete subset in $\AA\setminus\DD$. The second fact is Theorem 4.4 of 
  \cite{DyerHohlwegRipoll} since, by our choice of form, the fundamental chamber 
  has finite hyperbolic area.
\end{proof}

\begin{corollary}[Red, Blue or both]\label{cor:red-blue}
  For every disjoint partition $\widehat \Phi = R \sqcup B$, every point in 
  $\partial \DD$ is a limit of the rescaled roots in $R$, a limit of the rescaled 
  roots in $B$, or both.
\end{corollary}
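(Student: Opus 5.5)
The plan is to deduce the statement directly from \cref{accumulationpoints}, using only that a finite union of sets has the same accumulation points as the union of the accumulation points of its pieces (here just two pieces).

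Fix a point $x \in \partial \DD$. By \cref{accumulationpoints}, $x$ is a limit of points of $\widehat \Phi$. Since $\widehat\Phi$ is disjoint from $\DD$ and $x \in \DD$, we have $x \notin \widehat\Phi$. We can therefore choose a sequence of pairwise distinct rescaled roots $\widehat\alpha_n \in \widehat\Phi$ with $\widehat\alpha_n \to x$: take $\widehat\alpha_n$ in the ball of radius $1/n$ about $x$ and distinct from $\widehat\alpha_1,\dots,\widehat\alpha_{n-1}$. This is possible because every neighborhood of $x$ meets $\widehat\Phi$ and the earlier terms are at positive distance from $x$.

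Each $\widehat\alpha_n$ lies in exactly one of $R$ or $B$, because $\widehat\Phi = R \sqcup B$. By the pigeonhole principle, infinitely many terms lie in $R$ or infinitely many lie in $B$ (possibly both). Passing to the corresponding subsequence gives a sequence of distinct points of $R$, or of $B$, converging to $x$. Hence $x$ is a limit point of $R$ or of $B$, and the ``or both'' case is simply allowed.

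There is no real obstacle here. The only care needed is to ensure the limit is an honest accumulation point rather than a point of the set itself. This is handled by $\widehat\Phi \cap \DD = \emptyset$, which holds because rescaled roots have positive norm (\cref{def:klein}). The same remark applies if one reads $R$ and $B$ as subsets of $\widehat\Phi^+$, since $\widehat\Phi = \widehat{\Phi^+}$.
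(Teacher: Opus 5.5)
Your proposal is correct and follows essentially the same argument as the paper: take a sequence of distinct rescaled roots converging to $x$ (from \cref{accumulationpoints}) and apply pigeonhole to $R \sqcup B$. Your extra justification that the sequence can be taken distinct, because $\widehat\Phi \cap \DD = \emptyset$, is a detail the paper leaves implicit.
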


\begin{proof}
  Every point $x \in \partial \DD$ is the limit of a sequence of distinct 
  rescaled roots in $\widehat \Phi$ and this sequence has infinitely many 
  elements in $R$ or in $B$ or both.
\end{proof}

\section{Background: Extended Weak Order}\label{sec:bkgnd-weak-order}

In this section, we recall the definition of the extended weak order 
for any Coxeter system and present Dyer's conjectures. Throughout this 
section, $(W,S)$ is a finitely generated Coxeter system. We choose a 
geometric root system $\Phi$ with simple roots $\Delta$ following the 
same conventions as in \cref{sec:background_geometry}.

\begin{definition}[Inversion sets, weak order]
  For $\alpha\in\Phi^+,\ w\in W$, we say $\alpha$ is an \dfn{inversion} 
  of $w$ if $w^{-1}\cdot \alpha\in\Phi^-$. The \dfn{inversion set} of $w$, 
  denoted $\Inv(w)$, is its set of inversions. The \dfn{weak order} is 
  the poset $(W,\leq)$ in which $u\leq v$ if $\Inv(u)\subseteq \Inv(v)$.
\end{definition}

\begin{definition}[Reduced words]
  Any element of $W$ can be written as a product of simple reflections.
  If $w=s_1\cdots s_k$ for some $s_i\in S$, we say $s_1\cdots s_k$ is an 
  \dfn{$S$-word} for $w$. The \dfn{length} of $w$, denoted $\ell(w)$, is 
  the minimum length of an $S$-word for $w$, and any $S$-word of this 
  length is \dfn{reduced}. It is known that $u\leq v$ holds in the weak 
  order if and only if $\ell(u)+\ell(u^{-1}v) = \ell(v)$.
\end{definition}

Dyer's extension of the weak order relies on a geometric characterization 
of inversion sets, which we recall next.

\begin{definition}[Separable sets]
  Let $\mathcal{V}$ be a set of nonzero vectors in $\RR^n$, and let $X$ 
  be a subset of $\mathcal{V}$. All separating hyperplanes in this definition 
  are linear. In dimension zero, the empty set is weakly separable.
  \begin{itemize}
    \item The set $X$ is \dfn{separable} if there exists a hyperplane $H$ 
    such that $X$ and $\mathcal{V}\setminus X$ are contained in different 
    open half-spaces defined by $H$.
    \item The set $X$ is \dfn{weakly separable} if there exists a hyperplane 
    $H$ such that $X$ and $\mathcal{V}\setminus X$ are contained in different 
    closed half-spaces defined by $H$, and $X\cap H$ is a weakly separable 
    subset of $\mathcal{V}\cap H$ as a set of vectors of $H\cong \RR^{n-1}$.
    \item The set $X$ is \dfn{biclosed} if both $X$ and $\mathcal{V}\setminus 
    X$ are closed under taking vectors of $\mathcal{V}$ in the nonnegative 
    span of any two of their elements.
  \end{itemize} 
\end{definition}

It follows from the definitions that any separable set is weakly separable, 
and any weakly separable set is biclosed. For $\mathcal{V}=\Phi^+$, 
biclosedness is equivalent to requiring that $X\cap P$ be separable in 
$\Phi^+\cap P$ for every two-dimensional linear subspace $P$. In this case,
there is a well-known partial converse \cite[Proposition 2.11]{HohlwegLabbe}.

\begin{lemma}[Characterization of inversion sets]\label{lem:inversion-sets}
  Let $X$ be a finite subset of $\Phi^+$. The following statements are equivalent.
  \begin{enumerate}
    \item $X$ is biclosed.
    \item $X$ is separable.
    \item $X$ is the inversion set of some $w\in W$.
  \end{enumerate}
\end{lemma}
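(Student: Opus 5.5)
We prove the cycle of implications (3)$\Rightarrow$(2)$\Rightarrow$(1)$\Rightarrow$(3). The middle implication is immediate from the definitions, since separable implies weakly separable implies biclosed. For (3)$\Rightarrow$(2), take $X=\Inv(w)$ and use the point $w\cdot O$ from \cref{rem:inversion}. A positive root $\alpha$ lies in $\Inv(w)$ exactly when $\ip{w\cdot O}{\alpha}>0$, since $\ip{w\cdot O}{\alpha}=\ip{O}{w^{-1}\alpha}$ and $w^{-1}\alpha$ is negative precisely when this pairing is positive. No root is orthogonal to $w\cdot O$, because $w\cdot O$ lies in the interior of a chamber. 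Hence the linear hyperplane $(w\cdot O)^\perp$ puts $X$ and $\Phi^+\setminus X$ in opposite open half-spaces, so $X$ is separable.

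The substantive direction is (1)$\Rightarrow$(3), which we prove by induction on $|X|$. The base case $X=\emptyset$ is $\Inv(e)$. For the inductive step, let $X$ be finite, nonempty and biclosed. The plan is to find a simple root $\alpha_s\in X$ and then pass to
\[
X' = s\bigl(X\setminus\{\alpha_s\}\bigr).
\]
We then check three things about $X'$:
\begin{enumerate}
\item $X'\subseteq\Phi^+$, because $s$ permutes $\Phi^+\setminus\{\alpha_s\}$.
\item $|X'|=|X|-1$.
\item $X'$ is biclosed in $\Phi^+$.
\end{enumerate}
For (c), we use the reformulation stated just before the lemma: it suffices that $X'$ be separable within each rank-$2$ subsystem $\Phi^+\cap P$. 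Since $s$ is linear, it carries rank-$2$ subsystems to rank-$2$ subsystems, and the only issue is the subsystems containing $\alpha_s$. In such a subsystem, $X\cap P$ is an initial segment of the dihedral order beginning at the end containing $\alpha_s$. Removing $\alpha_s$ and applying $s$ reverses the order, so the result is again an initial segment, or a final one, in the transformed dihedral subsystem. This is the standard dihedral check.

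By induction, $X'=\Inv(u)$ for some $u\in W$. We then verify that $X=\Inv(su)$ and $\ell(su)=\ell(u)+1$, using the identity $\Inv(su)=\{\alpha_s\}\sqcup s\Inv(u)$, which holds whenever $\alpha_s\notin\Inv(u)$. That condition holds here: $\alpha_s\in\Inv(u)=X'$ would force $-\alpha_s=s\alpha_s\in X$, which is impossible since $X\subseteq\Phi^+$.

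The main obstacle is showing that a nonempty finite biclosed $X$ contains a simple root. The plan is to take $\beta\in X$ of minimal depth, or of minimal height in the simple-root expansion. If $\beta$ is not simple, choose $s$ with $\ip{\beta}{\alpha_s}>0$, which exists because $\beta$ is a positive combination of the simple roots and $\ip{\beta}{\beta}=1>0$. Then $s\beta$ is a positive root of smaller height, and $\beta$ lies in the nonnegative span of $\alpha_s$ and $s\beta$. Consider the rank-$2$ subsystem spanned by $\alpha_s$ and $\beta$. If $\alpha_s\notin X$, coclosedness of $\Phi^+\setminus X$ together with $\beta\in X$ forces $s\beta\in X$, which contradicts minimality. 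So $\alpha_s\in X$.

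Finiteness is used only to guarantee that the induction terminates. In an infinite dihedral subsystem, finiteness ensures that $X$ is an initial segment rather than a cofinite set, so the dihedral analysis in step (c) applies. I expect that dihedral bookkeeping to be the fiddliest step, though it is standard; see \cite[Proposition 2.11]{HohlwegLabbe}.
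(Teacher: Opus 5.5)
Your proof is correct. The paper does not prove this lemma. It calls the result well known and cites \cite[Proposition 2.11]{HohlwegLabbe}, so your argument is a self-contained replacement for that citation rather than a variant of something in the text. You follow the classical route: the functional $\langle w\cdot O,\cdot\rangle$ gives (3)$\Rightarrow$(2), and peeling off a simple root with induction on $|X|$ gives (1)$\Rightarrow$(3).

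The citation buys brevity and handles arbitrary geometric representations at once. Your version has a side benefit. It proves the fact, used without proof in \cref{lem:chainbelowinfinite}, that every nonempty biclosed set contains a simple root; with depth in place of height, your argument needs no finiteness. The chain built in that lemma comes from exactly your peeling step, run on an infinite set.

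A few small comments:
\begin{enumerate}
\item When $\alpha_s\in X$, your set $X'=s(X\setminus\{\alpha_s\})$ is precisely $s\cdot X$ in the paper's notation. So step (c) could be replaced by an appeal to \cref{lem:action}(2).
\item Your direct dihedral check is also fine, and it works for any biclosed $X$ containing $\alpha_s$. So your closing remark that finiteness is needed there is unnecessary. Finiteness is really used so that a minimal-height element exists and the induction terminates.
\item Your (3)$\Rightarrow$(2) uses a point $O\in V$ pairing negatively with all simple roots. That point exists in the paper's nondegenerate hyperbolic setting. For a degenerate form (e.g.\ affine type), which Section~3 nominally allows, replace $w\cdot O$ by $w$ applied to a linear functional in the interior of the fundamental chamber in $V^*$; the argument is otherwise unchanged.
\end{enumerate}
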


\begin{corollary}\label{cor:finitecofinite}
  If $X$ is a biclosed subset of $\Phi^+$ that is either finite or cofinite, 
  then $X$ is separable.
\end{corollary}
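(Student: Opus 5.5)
The plan is to reduce the corollary to \cref{lem:inversion-sets}, handling the finite and cofinite cases separately. The cofinite case will be reduced to the finite one by complementation, using the symmetry of the definitions.

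\emph{Finite case.} If $X$ is finite and biclosed, then \cref{lem:inversion-sets} gives directly that $X$ is separable. There is nothing further to do.

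\emph{Cofinite case.} Suppose $X$ is cofinite and biclosed, and put $Y=\Phi^+\setminus X$.
\begin{enumerate}
\item The definition of biclosed is symmetric in a set and its complement: it asks that both $X$ and $\Phi^+\setminus X$ be closed. Hence $Y$ is biclosed.
\item Since $X$ is cofinite, $Y$ is finite. By \cref{lem:inversion-sets}, $Y$ is separable.
\item Separability is also symmetric in a set and its complement. A hyperplane $H$ putting $Y$ and $\Phi^+\setminus Y=X$ in different open half-spaces equally witnesses that $X$ is separable.
\end{enumerate}
So $X$ is separable.

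The only point to check is this symmetry of the two definitions with respect to complementation. Both the biclosed and the separable conditions treat $X$ and $\mathcal{V}\setminus X$ identically, so the check is immediate. I therefore expect no real obstacle here: all of the substance lies in \cref{lem:inversion-sets}, which the paper cites from \cite[Proposition 2.11]{HohlwegLabbe}.

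As an alternative for the cofinite case, one could use the Coxeter-theoretic identity $\Inv(w)=\Phi^+\setminus\Inv(w)$ twisted by the longest element. That identity is unavailable in infinite type, and the complementation argument above avoids it entirely.
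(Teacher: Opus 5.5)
Your proof is correct and matches the paper, which states this as an immediate consequence of \cref{lem:inversion-sets}: the finite case is direct, and the cofinite case follows by passing to the finite complement, since both biclosedness and separability are symmetric under $X \leftrightarrow \Phi^+\setminus X$. (Your closing aside misstates the finite-type identity, which should read $\Inv(ww_0)=\Phi^+\setminus\Inv(w)$, but it plays no role in the proof.)
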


\begin{definition}[Extended weak order]
  The \dfn{extended weak order} for $(W,S)$ is the poset of biclosed subsets 
  of $\Phi^+$, ordered by inclusion.
\end{definition}

\begin{example}[Rank $2$]\label{ex:rank-2}
  If $W$ is a rank 2 (i.e. dihedral) Coxeter group, then as in \Cref{rem:rank-2}, 
  there are three types of root system for $W$: finite, affine, or hyperbolic.
    
  If $\alpha,\beta$ are the simple roots in this rank $2$ subsystem with 
  corresponding reflections $r=r_\alpha$ and $s=r_\beta$, then the three cases 
  are determined by the pairing $(\alpha,\beta)$. In all three cases, there is 
  a unique total order of the positive roots putting
  \[ 
  \alpha \prec r\cdot \beta \prec rs\cdot \alpha \prec \cdots 
  \prec sr\cdot \beta \prec s\cdot \alpha \prec \beta. 
  \]
  The biclosed sets are exactly the initial and terminal sections of this ordering. 
\end{example}

\begin{remark}[Independence of root system] \label{independence2}
  For a given Coxeter system $(W,S)$ there may be many choices for a root system 
  $\Phi$ whose corresponding reflection group is isomorphic to $W$ (\cref{rem:finite-area}).
  Using the bijection $\Phi^+\leftrightarrow T$, one can view a subset of positive 
  roots as a subset of reflections. As a subset of $T$, a biclosed set can be 
  characterized directly in terms of the Coxeter system, and so the notion of 
  biclosed is independent of the choice of root system $\Phi$; see 
  \cite[Section 11.2]{dyer:2019weak}. On the other hand, in rank $4$ or greater, 
  the weakly separable sets may depend on the choice of root system. In rank $3$, 
  the weakly separable sets  are independent of the choice of $\Phi$; see \cite{DyerWang2021}.
\end{remark}

Matthew Dyer made the following two conjectures about extended weak order, 
which we resolve for rank~$3$ Coxeter groups in this paper.

\begin{conjecture}[Conjecture A]
  If $\cC$ is a maximal chain in extended weak order, there is a total ordering 
  $\prec$ of $\Phi^+$ such that each initial section of $\prec$ is in $\cC$. 
  Equivalently, for any cover relation $B_1\lessdot B_2$ in extended weak order, 
  we have $|B_2\setminus B_1|=1$.
\end{conjecture}

Recall that a complete lattice is a poset for which any collection of elements 
has a least upper bound and greatest lower bound. Dyer's second conjecture is 
the following.

\begin{conjecture}[Conjecture B/Lattice Conjecture]
  Extended weak order is a complete lattice.
\end{conjecture}

For root systems, we recall a weaker form of biclosed sets introduced in 
\cite{BDHMMS25}.

\begin{definition}[Parabolic biclosed]\label{dfn:parabolic_biclosed}
  A subset $X$ of $\Phi^+$ is \dfn{parabolic biclosed} if for every $2$-dimensional 
  subspace $P\subseteq \RR^n$ that supports a rank $2$ parabolic subsystem of $\Phi$, 
  the set $X\cap P$ is a separable subset of $\Phi^+\cap P$. 
\end{definition}

\begin{lemma}
  For any rank $2$ parabolic subsystem $\Phi'$, we have $\Phi' = \Phi\cap \Span(\Phi')$.
\end{lemma}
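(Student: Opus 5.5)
The plan is to use the standard description of a rank $2$ parabolic subsystem as a conjugate of a standard one, and then to appeal to a known fact about standard parabolic subsystems. Write $\Phi' = w\cdot \Phi_{\{s_i,s_j\}}$, where $\Phi_{\{s_i,s_j\}}$ is the set of roots of the standard parabolic subgroup $\langle s_i,s_j\rangle$; this form is available by \cref{def:roots}. Then $\Span(\Phi') = w\cdot \Span(\alpha_i,\alpha_j)$. Since $W$ acts linearly and preserves $\Phi$, we get
\[
\Phi\cap \Span(\Phi') = w\cdot\bigl(\Phi\cap \Span(\alpha_i,\alpha_j)\bigr).
\]
So it suffices to treat the standard case $w=1$.

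In the standard case, the inclusion $\Phi_{\{s_i,s_j\}}\subseteq \Phi\cap\Span(\alpha_i,\alpha_j)$ is immediate, because the generators $r_{\alpha_i}$ and $r_{\alpha_j}$ preserve this span. For the reverse inclusion, take $\beta\in\Phi\cap\Span(\alpha_i,\alpha_j)$; replacing $\beta$ by $-\beta$ if necessary, we may assume $\beta\in\Phi^+$. Then $\beta$ is a nonnegative combination of $\alpha_i$ and $\alpha_j$ alone, since the simple roots are linearly independent. The claim is that any positive root supported on $\{\alpha_i,\alpha_j\}$ is a root of the parabolic $\langle s_i,s_j\rangle$.

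I would argue this by induction on depth. Write $\beta = c_i\alpha_i + c_j\alpha_j$ with $\beta$ not simple. Then there is a simple reflection $s$ with $\langle\beta,\alpha_s\rangle>0$, and $s\beta$ is a positive root of smaller depth. The only thing to check is that $s$ can be taken in $\{s_i,s_j\}$. Suppose instead that $s=s_k$ is the third generator. Then
\[
\langle \beta,\alpha_k\rangle = c_i\langle\alpha_i,\alpha_k\rangle + c_j\langle\alpha_j,\alpha_k\rangle \le 0,
\]
because both off-diagonal pairings are $\le 0$, which contradicts $\langle\beta,\alpha_k\rangle>0$. Hence $s\in\{s_i,s_j\}$. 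The root $s\beta$ stays in the span and is positive, so by induction it lies in $\Phi_{\{s_i,s_j\}}$, and therefore so does $\beta = s(s\beta)$.

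The main obstacle is the existence of a simple root with $\langle\beta,\alpha_s\rangle>0$ that decreases depth. This is the standard fact that a non-simple positive root has a descent among the simple reflections. It follows from $\langle\beta,\beta\rangle=1>0$: writing $\beta=\sum c_s\alpha_s$ with $c_s\ge 0$, we get $1=\sum c_s\langle\beta,\alpha_s\rangle$, so some $\langle\beta,\alpha_s\rangle>0$. For that $s$, the root $s\beta = \beta - 2\langle\beta,\alpha_s\rangle\alpha_s$ has a strictly smaller coefficient sum. It remains positive because $\beta\neq\alpha_s$ and a reflection $s_\ell$ permutes $\Phi^+\setminus\{\alpha_\ell\}$. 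Alternatively, I could simply cite the standard result, for instance from Bourbaki or Dyer, that $\Phi\cap\Span(\Delta')=\Phi_{\Delta'}$ for any $\Delta'\subseteq\Delta$.
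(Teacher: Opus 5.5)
The paper states this lemma without proof, treating it as the standard fact $\Phi\cap\Span(\Delta_J)=\Phi_J$ transported by $w$. Your reduction to $w=1$, the easy inclusion $\Phi_J\subseteq\Phi\cap\Span(\alpha_i,\alpha_j)$, and your fallback of citing that standard fact all match this.

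Your self-contained descent argument for the reverse inclusion has one gap: you never show that it terminates.
\begin{itemize}
\item You say you induct on depth, but what you verify is that the coefficient sum (height) strictly drops from $\beta$ to $s\beta$.
\item For non-crystallographic groups this height is real-valued. For example, in $\Cox(2,3,7)$ the coefficients lie in $\mathbb{Z}[2\cos(\pi/7)]$, which is dense in $\mathbb{R}$.
\item A strictly decreasing sequence of heights therefore need not terminate, so ``strictly smaller coefficient sum'' does not by itself license the induction.
\end{itemize}

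There are two easy repairs.
\begin{enumerate}
\item Use the integer-valued depth $\mathrm{dp}(\beta)=\min\{\ell(w): w\beta\in\Phi^-\}$. Then cite the standard depth lemma: if $\beta\in\Phi^+$, $\beta\neq\alpha_s$ and $\ip{\beta}{\alpha_s}>0$, then $\mathrm{dp}(s\beta)=\mathrm{dp}(\beta)-1$ (see Bj\"orner--Brenti's treatment of small roots). This is a different statement from the height drop you proved.
\item Keep the height, but note that only finitely many positive roots have height at most that of $\beta$. In this paper's normalization this is quick:
\begin{itemize}
\item Since $\ip{O}{\alpha_1}=\ip{O}{\alpha_2}=\ip{O}{\alpha_3}$, the height of $\gamma\in\Phi^+$ equals $\ip{O}{\gamma}/\ip{O}{\alpha_1}$.
\item Since $Q(O)=-1$ and $Q(\gamma)=1$, we have $|\ip{O}{\gamma}|=\sinh d(O,H_\gamma\cap\hyp^2)$.
\item So bounded height means the wall passes within bounded distance of $O$. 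Local finiteness of the tiling (equivalently, discreteness of $\Phi$) leaves only finitely many such walls.
\end{itemize}
\end{enumerate}

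A minor simplification: your separate check that $s\neq s_k$ is unnecessary. The identity $1=\sum_s c_s\ip{\beta}{\alpha_s}$ already yields an $s$ with $c_s>0$ and $\ip{\beta}{\alpha_s}>0$, so $s$ lies in the support $\{s_i,s_j\}$ automatically.
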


\begin{corollary}
  A subset $X$ of $\Phi^+$ is parabolic biclosed if and only if for any rank~$2$ 
  parabolic subsystem $\Phi'$, the set $X\cap (\Phi')^+$ is a separable subset of 
  $(\Phi')^+$.
\end{corollary}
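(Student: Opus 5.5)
The statement to prove is the last lemma: for any rank $2$ parabolic subsystem $\Phi'$, we have $\Phi' = \Phi\cap \Span(\Phi')$. The corollary then follows at once.

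By \cref{def:roots}, a rank $2$ parabolic subsystem has the form $\Phi' = w\cdot \Phi_{ij}$. Here $\Phi_{ij}$ is the root system of the standard parabolic $\langle s_i,s_j\rangle$, and $\Span(\Phi') = w\cdot\Span(\alpha_i,\alpha_j)$. Since $W$ acts linearly and preserves $\Phi$, applying $w^{-1}$ reduces the statement to the standard case $\Phi_{ij} = \Phi\cap \Span(\alpha_i,\alpha_j)$.

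The inclusion $\subseteq$ is clear, because $\langle s_i,s_j\rangle$ preserves $\Span(\alpha_i,\alpha_j)$.

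For $\supseteq$, let $\beta\in\Phi$ lie in $P=\Span(\alpha_i,\alpha_j)$. Replacing $\beta$ by $-\beta$, we may assume $\beta\in\Phi^+$. I would use the standard fact that a positive root whose support (its expansion in $\Delta$) lies in $\{\alpha_i,\alpha_j\}$ belongs to the root system of $\langle s_i,s_j\rangle$. Write $\beta = u\cdot\alpha_k$. The argument is by induction on the depth of $\beta$, or equivalently on $\ell(u)$ for a minimal such $u$.

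If $\beta$ is simple, then $\beta\in\{\alpha_i,\alpha_j\}$, since $\Delta$ is linearly independent in $V\cong\RR^3$. Otherwise there is a simple reflection $s$ with $\ip{\beta}{\alpha_s}>0$, and then $s\cdot\beta = \beta - 2\ip{\beta}{\alpha_s}\alpha_s$ is a positive root of smaller depth. If $\beta\in P$ has positive coefficients only on $\alpha_i,\alpha_j$, we must show that $s\in\{s_i,s_j\}$. Indeed, for the third index $k$ we have $\ip{\beta}{\alpha_k} = c_i\ip{\alpha_i}{\alpha_k}+c_j\ip{\alpha_j}{\alpha_k}\le 0$, because off-diagonal pairings are nonpositive and $c_i,c_j\ge0$. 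Hence $s\ne s_k$, so $s\cdot\beta$ stays in $P$, and induction gives $s\cdot\beta\in\Phi_{ij}$. Therefore $\beta\in\Phi_{ij}$.

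The only delicate step is the descent claim: every non-simple positive root has some simple $\alpha_s$ with $\ip{\beta}{\alpha_s}>0$, and reflecting by $s_s$ keeps the root positive while lowering its depth. This is standard (Björner–Brenti, depth lemma). Alternatively, if $\ip{\beta}{\alpha_s}\le0$ for all $s$, then $\ip{\beta}{\beta}=\sum c_s\ip{\beta}{\alpha_s}\le 0$, which contradicts $\ip{\beta}{\beta}=1$. That gives the needed $s$. Positivity of $s\cdot\beta$ holds because $s_s$ permutes $\Phi^+\setminus\{\alpha_s\}$.

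For the corollary: every $2$-dimensional subspace supporting a rank $2$ parabolic subsystem is $\Span(\Phi')$, and by the lemma $\Phi^+\cap \Span(\Phi') = (\Phi')^+$. So the condition in \cref{dfn:parabolic_biclosed} reads exactly as separability of $X\cap(\Phi')^+$ in $(\Phi')^+$.
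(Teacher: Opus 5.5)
Your proposal is correct and follows the paper's route: the paper deduces the corollary immediately from the preceding lemma $\Phi'=\Phi\cap\Span(\Phi')$, exactly as in your last paragraph. The paper states that lemma without proof, and your depth induction is a valid proof of it; the key point is that $\ip{\beta}{\alpha_k}\le 0$ for the index $k\notin\{i,j\}$, so every descent step stays inside $\Span(\alpha_i,\alpha_j)$.
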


We define an action of $W$ on subsets of $\wh{\Phi}$ as follows.
Given sets $X,Y$, let $X\triangle Y = (X\cup Y)\setminus(X\cap Y)$ be the 
\dfn{symmetric difference} of $X$ and $Y$. For $w\in W$ and $X\subseteq\wh{\Phi}$, 
let \[ w\cdot X = \wh{\Inv}(w)\triangle\{\wh{w\cdot\alpha}:\ \wh\alpha\in X\}. \]
This also defines an action of $W$ on subsets of $\Phi^+$. If $X\subseteq\Phi^+$, 
we let $w\cdot X = \{\alpha\in\Phi^+:\ \wh{\alpha}\in w\cdot\wh{X}\}$; see 
\cite[Lemma 4.1(i)]{dyer:2019weak}.

\begin{lemma}\label{lem:action}
  Let $w\in W,\ X\subseteq\Phi^+$.
  \begin{enumerate}
  \item If $X$ is weakly separable, then $w\cdot X$ is weakly separable.
  \item If $X$ is biclosed, then $w\cdot X$ is biclosed.
  \item If $X$ is parabolic biclosed, then $w\cdot X$ is parabolic biclosed.
  \end{enumerate}
\end{lemma}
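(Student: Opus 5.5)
It suffices to treat $w = s$ a simple reflection, since the action is a group action: $w\cdot(w'\cdot X) = (ww')\cdot X$, which follows from the cocycle identity $\Inv(ww') = \Inv(w)\triangle \wh{w\cdot \Inv(w')}$ at the level of rescaled roots. Then induct on $\ell(w)$. For $w=s=r_{\alpha_s}$, the map $\alpha\mapsto s\cdot\alpha$ permutes $\Phi^+\setminus\{\alpha_s\}$. The action is then explicit:
\[ s\cdot X = \bigl(s(X\setminus\{\alpha_s\})\bigr)\cup\bigl(\{\alpha_s\}\setminus X\bigr). \]
The key reformulation is the following. Let $\widetilde X = X\cup(-(\Phi^+\setminus X))\subseteq\Phi$, the set of roots ``on the $X$ side''. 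Then
\[ \widetilde{s\cdot X} = s\bigl(\widetilde X\bigr), \]
since $s$ sends $\pm\alpha_s$ to $\mp\alpha_s$ and permutes the other positive roots. Each of the three properties is phrased in terms of $\widetilde X$ in a way that is manifestly invariant under a linear map preserving $\Phi$.

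\emph{Weakly separable.} $X$ is weakly separable iff there is a full flag of linear functionals, i.e.\ a lexicographic ordering $f_1,f_2,\dots$, such that $\widetilde X$ consists exactly of the roots $\beta$ whose first nonzero value $f_i(\beta)$ is positive. This is a rephrasing of the recursive definition: the roots on the hyperplane are handled recursively inside $H$. The condition only references $\widetilde X$ and linear functionals, so applying $s$ and replacing $f_i$ by $f_i\circ s$ gives the same description for $s(\widetilde X)=\widetilde{s\cdot X}$.

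\emph{Biclosed.} $X$ is biclosed iff for every two-dimensional subspace $P$, $\widetilde X\cap P$ is ``half'' of $\Phi\cap P$. Precisely, it contains exactly one of each pair $\pm\beta$, and it is closed under nonnegative combinations lying in $\Phi\cap P$. For the closure condition, use the rank-$2$ separability reformulation: $X\cap P$ is separable in $\Phi^+\cap P$ iff some linear functional on $P$ is positive on $\widetilde X\cap P$. Since $s$ maps $2$-planes to $2$-planes and $\Phi$ to $\Phi$, the condition transfers.

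\emph{Parabolic biclosed.} The argument is identical, restricted to planes spanned by rank-$2$ parabolic subsystems. Here one uses that $s$ maps parabolic subsystems $w\Phi'$ to parabolic subsystems $sw\Phi'$, by the remark in \cref{def:roots}.

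The main obstacle is checking carefully that separability of $X\cap P$ inside $\Phi^+\cap P$ is equivalent to $\widetilde X\cap P$ lying in an open half-plane. One must take care when $P\cap\Phi^+$ is empty or a single root, and verify that passing between positive and negative roots introduces no sign issues. The same care is needed for the flag characterization of weak separability in the presence of roots lying on the separating hyperplanes.
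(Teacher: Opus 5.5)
Your proof is correct. The paper gives no proof of this lemma: it states it right after defining the action and points to Dyer's Lemma~4.1(i). So the only thing to compare against is the standard argument behind that citation, and yours is essentially that argument.

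The key fact is that the paper's action is exactly $w\cdot X=\Phi^+\cap w(\widetilde X)$. Once you have this, each of the three properties becomes a condition on $\widetilde X\subseteq\Phi$ that is visibly invariant under a linear automorphism of $V$ preserving $\Phi$ and permuting the planes spanned by rank-$2$ parabolic subsystems.

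A few simplifications and clarifications:
\begin{itemize}
\item You do not need the cocycle identity or induction on $\ell(w)$. For any $w$ and any $\beta\in\Phi^+$, the definition gives directly that $\beta\in w\cdot X$ if and only if $w^{-1}\beta\in\widetilde X$; just split on whether $\beta\in\Inv(w)$. Hence $\widetilde{w\cdot X}=w(\widetilde X)$ holds for all $w$ at once.
\item For (2), the cleanest invariant form is this: $X$ is biclosed if and only if $\widetilde X$ is closed in $\Phi$, meaning any root in the nonnegative span of two elements of $\widetilde X$ lies in $\widetilde X$. This is a short case check using closedness of $X$ and of $\Phi^+\setminus X$, and it avoids appealing to the fact that rank-$2$ biclosed sets are separable.
\item The separability reformulation is needed only for (3), and there it is immediate. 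A functional on $P$ is positive on $\widetilde X\cap P$ if and only if it is positive on $X\cap P$ and negative on $(\Phi^+\cap P)\setminus X$. This includes the cases where $\Phi^+\cap P$ has at most one element, so the ``main obstacle'' you flag is harmless.
\item For (1), require the functionals to form a full flag: $\ker f_1\supsetneq\ker f_1\cap\ker f_2\supsetneq\cdots\supsetneq 0$. Then the restricted functionals never vanish, and the flag condition is preserved by $f_i\mapsto f_i\circ w^{-1}$. For $w=s$ this is your $f_i\circ s$.
\end{itemize}
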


\section{Additional results on rank 3 Coxeter groups}
In this section we record a few results about rank $3$ hyperbolic 
Coxeter groups that are elementary, but for which we can find no reference 
in the literature.  Our first result is a characterization of when
the line through two rescaled roots can be tangent to the disk and 
the second discusses the corresponding positive half-space.

\begin{lemma}[Parabolic Cusps]\label{lem:cusps}
  If $\widehat{\alpha}, \widehat{\beta} \in \widehat{\Phi}$ are rescaled 
  roots of the Coxeter group $\Cox(p,q,r)$ and the line 
  $L=L(\widehat{\alpha},\widehat{\beta})$ is tangent to $\partial \DD$, 
  then the point $x$ of tangency is the ideal point of an infinite face 
  $F = w \langle S' \rangle$, and the rescaled roots $\widehat\Phi\cap L$ 
  are the $w$ translates of the roots in the standard rank $2$ parabolic 
  subgroup $\langle S' \rangle$.  In particular, $r=\infty$.
\end{lemma}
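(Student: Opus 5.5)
Let $\alpha,\beta$ be the positive roots with these rescalings, let $P=\Span(\alpha,\beta)$, and let $G=\langle r_\alpha,r_\beta\rangle$. Since $L$ is tangent to $\partial\DD$, the plane $P$ meets the isotropic cone in a single line $\RR x$, with $x$ the point of tangency. So the form restricted to $P$ is positive semidefinite and degenerate with radical $\RR x$, and $G$ is of affine type in the sense of \cref{rem:rank-2}. In particular $r_\alpha r_\beta$ has infinite order; it acts on $P$ unipotently and fixes $x$. Viewed on $\hyp^2$, it is a parabolic isometry whose unique fixed point on $\partial\DD$ is $x$. Every reflection in $G$ has its hyperplane passing through $x$, since the corresponding rescaled root lies on $L=L_x$.

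The key step is to show that $x$ is an ideal vertex of the tiling. The plan is to use the fact that $W$ acts on $\hyp^2$ as a lattice with finite-area fundamental domain $\fund$ (\cref{rem:finite-area}). For a geometrically finite Fuchsian group, every fixed point of a parabolic element is a cusp, i.e. it is $\Gamma$-equivalent to an ideal vertex of a finite-sided fundamental polygon. Applying this to the index-$2$ orientation-preserving subgroup, which contains $r_\alpha r_\beta$, gives some $w\in W$ such that $w^{-1}x$ is an ideal vertex of $\fund$.

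An elementary route would avoid citing this theorem. Choose a horoball $H$ at $x$ small enough that its image avoids the compact part of $\fund$ away from its ideal vertices. The tiles meeting a horocycle near $x$ are then permuted by the cyclic group $\langle r_\alpha r_\beta\rangle$. Finiteness of area forces some tile to have a vertex at $x$, since otherwise the horocycle of length tending to $0$ would pass through infinitely many tiles that are bounded away from the cusp region, which is a contradiction. I expect this step to be the main obstacle, and I would cite the standard cusp theorem rather than reprove it. Granting it, $x=w\cdot v$ where $v\in\partial\DD$ is the ideal vertex of $\fund$ between the sides $H_{\alpha_i}$ and $H_{\alpha_j}$, where $S'=\{s_i,s_j\}$ and $s_is_j$ has infinite order. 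In particular one of $p,q,r$ is infinite, and since $p\le q\le r$ this means $r=\infty$. The face $F=w\langle S'\rangle$ is the infinite face whose ideal point is $x$, as described in \cref{rem:face-lines}.

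It remains to identify $\widehat\Phi\cap L$. The line $L=L_x$ is the dual of $x$, so a rescaled root lies on $L$ if and only if its reflecting hyperplane passes through $x$. By \cref{rem:face-lines} applied to the translated face, the roots $w\cdot(\Phi_{S'})$ are contained in $L$. Conversely, suppose $\gamma$ is a root with $\widehat\gamma\in L$. Then $w^{-1}\gamma$ lies in the plane $w^{-1}P$, which is the orthogonal complement of the ideal vertex $v$. That plane equals $\Span(\alpha_i,\alpha_j)$, because both $\alpha_i$ and $\alpha_j$ are orthogonal to $v$ and the complement is $2$-dimensional. By the preceding lemma, $\Phi\cap\Span(\alpha_i,\alpha_j)=\Phi_{S'}$, so $\gamma\in w\cdot\Phi_{S'}$. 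Hence $\widehat\Phi\cap L=\widehat{w\cdot\Phi_{S'}}$, which completes the argument.
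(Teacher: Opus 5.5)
Your proof is correct, but it handles the central step differently from the paper. You reduce the question to the fact that $r_\alpha r_\beta$ is a parabolic isometry fixing $x$. You then cite the cusp theorem for finite-covolume Fuchsian groups, applied to the rotation subgroup, to conclude that $x$ is a $W$-translate of an ideal vertex of $\fund$.

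The paper proves this step by hand. It first replaces $\alpha,\beta$ by roots of $\langle r_\alpha,r_\beta\rangle$ on opposite sides of $x$, so that $O$ lies between the two mirrors. It then takes the finite-area quadrilateral $Q$ whose vertices are $O$, the feet $A,B$ of the perpendiculars from $O$ to the mirrors, and the ideal point $x$, and shows that only finitely many chambers meet $Q$:
\begin{itemize}
\item chambers lying entirely inside $Q$ are finitely many by an area count;
\item chambers only partially inside $Q$ cannot cross the mirrors, so each has a side crossing $OA\cup OB$; such sides lie on reflecting lines passing near $O$, and there are finitely many of these by \cref{accumulationpoints}.
\end{itemize}
Points of $Q$ near $x$ must therefore lie in an ideal corner of one of these chambers. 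Since that corner cannot cross the mirrors, its ideal vertex is $x$. This also gives $r=\infty$ directly: if all of $p,q,r$ were finite, the finitely many chambers would be compact and could not cover all of $Q$.

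Your route is shorter and conceptually transparent, but it depends on an external theorem. The paper's route is self-contained. The mirrors through $x$ are exactly what rule out the case your ``elementary route'' sketch leaves open, namely deep points of a horoball at $x$ lying in ideal corners at cusps other than $x$. As written, that sketch also presupposes a horoball at $x$ whose image avoids the thick part of $\fund$, which is essentially the claim. So citing the theorem, as you propose, is the right choice.

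Your final paragraph identifies $\widehat\Phi\cap L$ via $P=x^\perp$, $w^{-1}P=v^\perp=\Span(\alpha_i,\alpha_j)$, and $\Phi\cap\Span(\Phi')=\Phi'$. This is more explicit than the paper's one-line conclusion.
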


\begin{figure}
  \centering
  \includegraphics[height=60.105mm]{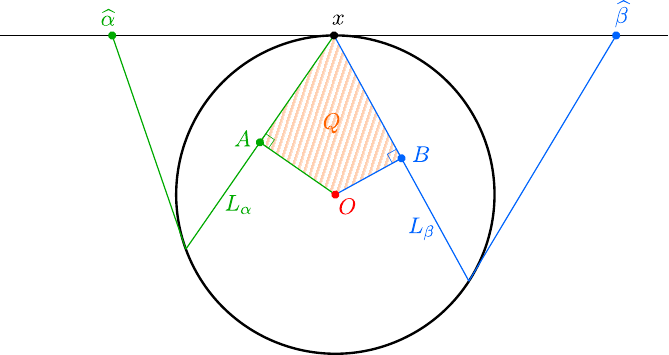}
  \caption{Diagram for the proof of \cref{lem:cusps}}
  \label{fig:cusps}
\end{figure}

\begin{proof}
  The lines $L_\alpha$ and $L_\beta$ meet at the point $x \in \partial \DD$ 
  and the reflections $r_\alpha$ and $r_\beta$ generate an infinite dihedral 
  group $G = \langle r_\alpha, r_\beta \rangle$ fixing $x$.  The roots in $G$
  approach $x$ from both sides (\cref{rem:face-lines}).  Thus, without loss 
  of generality, we may assume that the point $x$ is between $\widehat\alpha$ and 
  $\widehat \beta$.  In $\DD$ this corresponds to $O$ being between the lines
  $L_\alpha$ and $L_\beta$.
  Let $A$ and $B$ be the points on $L_\alpha$ and $L_\beta$ closest to $O$, and 
  consider the convex quadrilateral $Q$ with vertices $A$, $O$, $B$ and ideal 
  point $x$. See \cref{fig:cusps}. The area of $Q$ is finite.\footnote{Bounded 
  above by $\pi$ since its area is $2\pi$ minus the sum of its angles and the 
  angles at $A$ and $B$ are right angles.}  Let $T$ be the union of chambers in 
  the hyperbolic triangular tiling that contain a point in the interior of $Q$.  
  Since these triangles have a constant non-zero area there are only finitely 
  many completely inside $Q$.\footnote{The group $\Cox(2,3,7)$ has the smallest 
  triangle with area $\frac{\pi}{42}$, so there are at most $42$ chambers completely 
  in $Q$.}
  There are also only finitely many that are partially inside $Q$.  Any such 
  triangle must have a side that intersects the line segments $OA$ or $OB$; 
  they cannot cross $L_\alpha$ and $L_\beta$ since both lines are part of the 
  $1$-skeleton of the triangular tiling.  These sides extend to lines passing 
  within a bounded distance of $O$, which means the corresponding roots 
  are bounded away from $\partial \DD$.  This set of potential roots is finite 
  by \cref{accumulationpoints} since it is contained in a compact subset of 
  $\AA \setminus \DD$, inside the triangle $\conv(\widehat\alpha_1, \widehat\alpha_2, 
  \widehat\alpha_3)$ but bounded away from $\DD$.  Finally, each side crossing
  $OA \cup OB$ is a side of at most $2$ chambers. 
  
  If $p \leq q \leq r < \infty$, each chamber has finite diameter, and we get 
  a contradiction.  There are points in $Q$ that are arbitrarily far from $O$ 
  (particularly those approaching $x$), but the points in the chambers with points 
  in $Q$ are bounded distance from $O$ (finitely many chambers plus finite diameter).  
  Thus, $r=\infty$.  The finitely many chambers intersecting $Q$ have finitely 
  many centers $\{w_i\cdot O\}$ that are a bounded distance from $O$.  Any 
  point $C$ in $Q$ that is sufficiently far from $O$ must be covered by
  a chamber whose center $w\cdot O$ is relatively close to $O$ and far from $C$.  
  The point $C$ in a chamber arbitrarily far from its center must be in a corner 
  of the chamber with an ideal vertex $y$.  And given the fact that the two sides 
  of the chamber meeting at $y$ (labeled, say $s_i$ and $s_j$) cannot cross the 
  lines $L_\alpha$ and $L_\beta$,  the ideal vertex $y$ must be at $x\in \partial \DD$.  
  This makes the line through $\widehat \alpha$ and $\widehat \beta$ the line for 
  the reflections in the conjugate subgroup $w \cdot \langle s_i, s_j\rangle\cdot w^{-1}$.
\end{proof}

The contra-positive of \Cref{lem:cusps} says that when $p$, $q$, and $r$ are finite, 
the lines determined by the rescaled roots $\widehat \Phi$ of $\Cox(p,q,r)$ are never 
tangent to $\partial \DD$.

\begin{proposition}\label{prop:finite_visibility}
  Fix $x\in\partial\DD$.  If the tangent line  $L_x$ contains two rescaled roots 
  of $\widehat\Phi$ then the intersection $\resroots \cap L_x^+$ is finite.  Conversely, 
  if the intersection $\resroots \cap L_x^+$ is finite, then $L_x$ contains 
  the rescaled roots of a parabolic rank $2$ subsystem. 
\end{proposition}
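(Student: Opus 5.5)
The forward direction reduces to \cref{lem:cusps} plus an inversion-set argument. The converse follows a geodesic ray from $O$ to $x$ and counts the walls it crosses. Throughout, note the sign conventions. Since $\ip{x}{O}=-1<0$, the point $O$ lies in $L_x^-$. For a positive root $\alpha$ we have $\ip{O}{\alpha}<0$. Hence $\widehat\alpha\in L_x^+$ means exactly that $\ip{x}{\alpha}>0$, i.e. the ideal point $x$ lies strictly on the opposite side of the wall $H_\alpha$ from $O$. Roots with $\widehat\alpha\in L_x$ are exactly those whose walls end at $x$.

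For the first statement, suppose $L_x$ contains two rescaled roots. By \cref{lem:cusps}, $x$ is the ideal point of an infinite face $F=w\langle S'\rangle$. In particular $x$ is an ideal vertex of the chambers $u\cdot\fund$ for $u\in w\langle S'\rangle$. Let $w_0$ be the source of $\partial F$ (the minimal length element of the coset). Take $\alpha\in\Phi^+$ with $\widehat\alpha\in L_x^+$. Then $H_\alpha$ does not end at $x$, so its closure in $\DD$ misses some neighbourhood $U$ of $x$ in $\DD$. Moreover $U\setminus H_\alpha$ lies in $H_\alpha^+$. Since $x$ is a vertex of $w_0\cdot\fund$, the interior of $w_0\cdot\fund$ meets $U$. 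Because a wall never cuts through a chamber, all of $w_0\cdot\fund$, and in particular $w_0\cdot O$, lies in $H_\alpha^+$. By \cref{rem:inversion}, $\alpha\in\Inv(w_0)$. Hence $\resroots\cap L_x^+$ injects into the finite set $\Inv(w_0)$.

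For the converse, assume $\resroots\cap L_x^+$ is finite and consider the geodesic ray $\rho$ from $O$ to $x$ (a straight segment in the Klein model). A wall $H_\alpha$ not ending at $x$ is crossed by $\rho$ at most once. If it is crossed, the tail of $\rho$ and hence $x$ lie on the far side from $O$, so $\widehat\alpha\in L_x^+$ for the positive root $\alpha$. At most one wall that $\rho$ crosses transversally can end at $x$. Therefore $\rho$ crosses only finitely many walls. The tiling is locally finite, so every compact initial segment of $\rho$ meets finitely many chambers. Consequently the tail of $\rho$ eventually stays inside a single closed chamber $u\cdot\fund$, which forces $x$ to be an ideal vertex of $u\cdot\fund$. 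The two sides of $u\cdot\fund$ meeting at $x$ are walls ending at $x$, so their roots lie on $L_x$. These sides generate the conjugate parabolic subgroup $u\langle s_i,s_j\rangle u^{-1}$ with $s_is_j$ of infinite order, and by \cref{rem:face-lines} all of its rescaled roots lie on $L_x$. This gives the required rank $2$ parabolic subsystem.

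The main obstacle is making the step ``finitely many walls crossed $\Rightarrow$ $x$ is an ideal vertex'' rigorous. One must handle the possibility that $\rho$ runs along a wall, or passes through vertices of the tiling, and use local finiteness of the tiling in $\hyp^2$ correctly. I would handle this by observing that $\rho$ can lie on at most one wall, and that such a wall ends at $x$. Then any chamber containing a tail segment of $\rho$ has $x$ in its closure, and $x$ cannot lie in a bounded part of the chamber, since $x\in\partial\DD$. Hence $x$ is one of its ideal vertices.
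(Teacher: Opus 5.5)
Your proof is correct. The forward direction is essentially the paper's argument. Both use \cref{lem:cusps} to place $x$ at the ideal point of an infinite face, then bound $\widehat\Phi\cap L_x^+$ by an inversion set of an element of that coset. The paper asserts equality with $\Inv(w)$ for the minimal coset representative. Your injection into $\Inv(w_0)$ via the chamber $w_0\cdot\fund$ is all that is needed, and any element of the coset would do.

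For the converse you take a genuinely different route. The paper argues combinatorially:
\begin{itemize}
\item the finite set $\{\gamma\in\Phi^+ : \langle x,\gamma\rangle>0\}$ is separable, by perturbing $\langle x,\cdot\rangle$ by $\varepsilon\langle O,\cdot\rangle$;
\item hence it equals $\Inv(w)$ by \cref{lem:inversion-sets};
\item translating by $w^{-1}$ gives a tangent line $L_y$ with no rescaled positive roots strictly on its far side;
\item since $\DD$ lies in the triangle spanned by the simple roots, $L_y$ must contain a side $L(\widehat\alpha_i,\widehat\alpha_j)$.
\end{itemize}
You instead follow the Klein-model segment from $O$ to $x$. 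Every wall it crosses contributes a root to $\widehat\Phi\cap L_x^+$, so its tail lies in a single open chamber, and $x$ is an ideal vertex of that chamber.

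Your argument is more elementary and geometric. It needs only two facts: the chambers tile $\hyp^2$, and the closure of a chamber meets $\partial\DD$ only in its ideal vertices, by strict convexity of $\DD$. It avoids the Hohlweg--Labb\'e characterization of finite biclosed sets as inversion sets. The paper's argument is shorter given that characterization, and it yields the sharper conclusion that $\widehat\Phi\cap L_x^+$ is exactly an inversion set.

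The obstacle you flag is milder than you fear. Since $O$ lies in the interior of $\fund$, the segment lies in no wall. A wall through $x$ meets the segment only at $x$ itself, so your ``at most one'' is in fact ``none.'' Hence every wall meeting the segment in $\DD^\circ$ is crossed exactly once, transversally, and corresponds to a root in $\widehat\Phi\cap L_x^+$.
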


\begin{proof} 
  If $L_x$ contains at least two rescaled roots, then 
  by \cref{lem:cusps}, $L_x$ contains the rescaled roots of a rank $2$ 
  parabolic root system $\Phi'$. Let $W_J=\langle s_i,s_j\rangle$ and 
  let $w$ be the minimal-length representative of the parabolic coset $wW_J$ 
  corresponding to this infinite face. Write $\Phi_J$ for the root system of $W_J$, 
  so that $\Phi'=w\Phi_J$. Geometrically, $\widehat{w\cdot O}$ is the unique 
  source on the boundary of the infinite face $F=\conv\{\widehat{g\cdot O}:g\in wW_J\}$.
  Then $w^{-1}\Phi'=\Phi_J$ is a standard parabolic subsystem of $\Phi$ and $w^{-1}\cdot L_x = 
  L(\widehat{\alpha}_i,\widehat{\alpha}_j)$ is a line spanned by 
  simple roots tangent to $\partial\DD$ at the point $y=w^{-1}\cdot x$ 
  (\Cref{rem:inversion}).  Note that $L_y^+ \cap \widehat\Phi = \emptyset$ 
  as there are no roots outside the triangle 
  $\conv(\widehat{\alpha}_1,\widehat{\alpha}_2,\widehat{\alpha}_3)$. 
  The roots in $\widehat\Phi\cap L_x^+$ correspond precisely to the 
  reflecting hyperplanes separating $O$ from the face $F$. Since $w$ is the 
  minimal-length representative of $wW_J$, these roots form $\widehat{\Inv(w)}$, 
  which is finite.
     
  In the other direction, suppose $L_x^+$ contains finitely many rescaled 
  positive roots $B = \{\gamma\in\Phi^+:\ \langle x,\gamma\rangle> 0\}$.  
  Then $B$ is a finite separable set: adding $\varepsilon\langle O,\gamma\rangle$ 
  to $\langle x,\gamma\rangle$ for sufficiently small $\varepsilon>0$ preserves 
  positivity on $B$ and makes the value negative on every other positive root. 
  Thus, $B = \Inv(w)$ for some $w\in W$ (\Cref{lem:inversion-sets}). 
  The line $w^{-1}\cdot L_x$ is tangent to $\partial\DD$ at $y=w^{-1}\cdot x$ and
  by \Cref{rem:inversion}, $\resroots\cap L_y^+ = \emptyset$. Since $\DD$ is 
  contained in the triangle $\conv(\widehat{\alpha}_1,\widehat{\alpha}_2,\widehat{\alpha}_3)$, 
  we conclude that $L_y$ contains a side of this triangle, i.e. $L_y= 
  L(\widehat{\alpha}_i,\widehat{\alpha}_j)$. Therefore, $L_x$ contains the rescaled 
  roots of the rank $2$ parabolic subsystem $w\Phi_J$, where $\Phi_J$ is the 
  root system of $\langle s_i,s_j\rangle$.
\end{proof}

\section{Root Polygons}\label{sec:rootpolygon}

In this section, we introduce root polygons and establish their 
basic properties.

\begin{definition}[Root polygon]\label{root-polygon-def}
  A \dfn{root polygon} $P$ is a convex polygon in $\AA$ enclosing 
  $\DD$ whose vertices are (rescaled) roots. Thus $P = \conv(X) 
  \subset \AA$ with $X\subset \resroots$, $\abs{X}<\infty$, and 
  $\DD \subset \conv(X)$.
\end{definition}

Note that in the definition of a root polygon $\pgon$, it is possible 
that $\DD$ intersects $\partial \pgon$.  

\begin{lemma}\label{lem:rootpolygoncomplement}
  Let $\pgon$ be a root polygon. Then there are finitely many rescaled 
  roots in $\AA\setminus\pgon$. 
\end{lemma}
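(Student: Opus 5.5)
The plan is to reduce the statement to a compactness argument combined with \cref{accumulationpoints}. Every rescaled root lies in the triangle $T=\conv(\widehat{\alpha}_1,\widehat{\alpha}_2,\widehat{\alpha}_3)$, since positive roots are nonnegative combinations of simple roots. So it suffices to show that $\resroots\cap(T\setminus \pgon)$ is finite. The set $T\setminus\pgon$ is bounded, and its closure $K=\overline{T\setminus \pgon}$ is compact. If $K$ were disjoint from $\DD$, then $K$ would be a compact subset of $\AA\setminus\DD$. By \cref{accumulationpoints}, $\resroots$ is discrete there and its only accumulation points lie on $\partial\DD$, so $\resroots\cap K$ would be finite and we would be done.

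The difficulty is that $\DD$ may touch $\partial\pgon$: the definition only requires $\DD\subset\pgon$, so $K\cap\DD\subseteq\partial\pgon\cap\partial\DD$. Since $\pgon$ is convex and contains the strictly convex disk $\DD$, each edge of $\pgon$ meets $\partial\DD$ in at most one point, and then it is tangent there. Hence $K\cap\DD$ is a finite set of tangency points $x_1,\dots,x_m$. Each $x_k$ lies in the relative interior of some edge $e_k$ of $\pgon$: it cannot be a vertex, because vertices are rescaled roots and have positive norm.

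The key step, which I expect to be the main obstacle, is to show that only finitely many rescaled roots lie near each $x_k$ on the outside of $\pgon$. The line $L_{x_k}$ containing $e_k$ is the tangent line at $x_k$, and it contains the two endpoints of $e_k$, both of which are rescaled roots. So by \cref{prop:finite_visibility}, $\resroots\cap L_{x_k}^+$ is finite. The open side of $L_{x_k}$ away from $\DD$ is $L_{x_k}^+$, since $\langle x_k, y\rangle<0$ for $y$ in $\DD^\circ$. Also, $\AA\setminus\pgon$ lies, near $x_k$, in the open halfplane beyond the edge $e_k$, because $x_k$ is interior to that edge. Choose a small open neighborhood $U_k$ of $x_k$ such that $U_k\setminus\pgon\subset L_{x_k}^+$. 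Then $\resroots\cap (U_k\setminus\pgon)$ is finite. If one worries about points of $\AA\setminus\pgon$ lying on the line itself, note that these are away from $e_k$ and hence outside $U_k$ for small $U_k$.

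Finally, $K\setminus\bigcup_k U_k$ is compact and disjoint from $\DD$. By the discreteness in \cref{accumulationpoints}, it contains finitely many rescaled roots. Combining this with the finitely many roots in each $U_k\setminus\pgon$ shows that $\resroots\setminus\pgon$ is finite.
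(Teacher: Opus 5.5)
Your proof is correct and uses the same two ingredients as the paper. Discreteness of $\widehat\Phi$ away from $\DD$ (\cref{accumulationpoints}) handles the part of the complement bounded away from $\DD$, and \cref{prop:finite_visibility} handles edges tangent to $\partial\DD$, whose endpoints are rescaled roots. The paper organizes this more directly: it writes $\AA\setminus\pgon$ as the finite union of the open half-planes beyond the edge lines and treats each half-plane as tangent or non-tangent, so it needs no local neighborhoods $U_k$. Your explicit use of the triangle $\conv(\widehat\alpha_1,\widehat\alpha_2,\widehat\alpha_3)$ to obtain compactness fills in a step the paper leaves implicit.
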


\begin{proof}
  Suppose $L$ is a line supporting an edge of $\pgon$, and let $L^+$ be 
  the open half space defined by $L$ that does not contain $\pgon$. If $L$ 
  is not tangent to $\partial\DD$, then $L^+\cap\wh{\Phi}$ is finite by 
  \cref{accumulationpoints}. If $L$ is tangent to $\partial\DD$, then 
  $L^+\cap\wh{\Phi}$ is finite by \cref{prop:finite_visibility}. The complement 
  of $\pgon$ is the union of the half spaces $L^+$ as $L$ runs over the lines 
  bounding $\pgon$.  Since $\pgon$ has finitely many edges, it follows that 
  $(\AA\setminus \pgon)\cap\wh{\Phi}$ is the union of a finite number of finite 
  sets.    
\end{proof}

\begin{remark}
  If $X$ is an arbitrary finite subset of $\AA$ such that $\DD\subseteq \conv(X)$, 
  then the statement of \Cref{lem:rootpolygoncomplement} can fail: there may 
  be infinitely many rescaled roots in the complement of $\conv(X)$. It is 
  important that the vertices of a root polygon are rescaled roots, because this 
  implies that the edges of the polygon that are tangent to $\DD$ are in fact 
  parabolic subsystems.
\end{remark}

\begin{definition}[Polar Dual Polygons]
  Let $\pgon$ be a convex subset of $\AA$. The \dfn{polar dual} of $\pgon$ is 
  the set
  \[ 
  \pgon^\diamond \coloneqq \{ x \in \AA \mid \langle x,p\rangle \leq 0 
  \text{ for all $p\in \pgon$} \}. 
  \]
\end{definition}

Let $C_0$ be the identity chamber. When $\pgon$ is a convex polygon and 
$\DD \subseteq \pgon \subseteq \conv(\widehat\alpha_1,\widehat\alpha_2,\widehat\alpha_3)$ 
(respectively, $C_0 \subseteq \pgon\subseteq \DD$), then $\polar$ is a convex polygon 
with $C_0 \subseteq \polar\subseteq \DD$ (respectively, $\DD \subseteq \polar 
\subseteq \conv(\widehat\alpha_1,\widehat\alpha_2,\widehat\alpha_3)$). Moreover, 
in these cases, $\pgon^{\diamond\diamond}=\pgon$. Hence, in these cases we may discuss 
the edges and corners of $\polar$. 

\begin{lemma}[Polar Dual Polygons]\label{lem:polar}
  Let $\pgon$ be a root polygon. Then $\polar$ is contained in $\DD$ and 
  contains the identity chamber. A rescaled root $\widehat\alpha$ is a vertex 
  of $\pgon$ if and only if $\widehat H_\alpha$ is a supporting hyperplane for 
  an edge of $\polar$. A rescaled root $\widehat\alpha$ is in $\partial \pgon$ 
  if and only if $\widehat H_\alpha$ intersects $\polar$ at a corner.
\end{lemma}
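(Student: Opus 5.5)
We will prove the three assertions of \cref{lem:polar} in order, using only the convexity set-up recorded just before the statement and basic facts about root polygons.

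\emph{Containment.} First we show $\polar\subseteq\DD$. Since $\DD\subseteq\pgon$, any $x\in\polar$ satisfies $\langle x,y\rangle\le 0$ for all $y\in\DD$. Suppose $Q(x)>0$. Then the polar line $L_x$ meets $\partial\DD$ in two tangency points, so $L_x^+\cap\DD$ is nonempty, which is a contradiction. (Points of $\AA$ have $\langle x,O\rangle=-1$, so $x=O$ causes no trouble.) Hence $Q(x)\le 0$, that is, $x\in\DD$.

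Next we show $C_0\subseteq\polar$, where $C_0$ is the image of the fundamental chamber. A point $c\in C_0$ satisfies $\langle c,\alpha\rangle\le 0$ for every positive root $\alpha$, since $\alpha$ is a nonnegative combination of the simple roots. The rescaled roots $\widehat\alpha\in\AA$ are positive multiples of positive roots, because they are of the form $\lambda\alpha$ with $\langle\lambda\alpha,O\rangle=-1$ and $\langle\alpha,O\rangle<0$. So $\langle c,v\rangle\le 0$ for every vertex $v$ of $\pgon$, and by convexity for every point of $\pgon$. Thus $c\in\polar$.

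\emph{Vertices and edges.} Because $\pgon=\conv(X)$ is a polygon, $\polar=\{x\in\AA:\langle x,v\rangle\le 0 \text{ for each vertex } v\}$, an intersection of finitely many half-planes $\overline{L_v^-}$. Here $\widehat H_\alpha$ is the line $L_{\widehat\alpha}$. We then use the standard polar duality between faces, justified by $\pgon^{\diamond\diamond}=\pgon$ in our range. If $\widehat\alpha$ is a vertex of $\pgon$, choose a point $y$ on a line strictly supporting $\pgon$ only at $\widehat\alpha$, scaled into $\AA$ so that $\langle y,\widehat\alpha\rangle=0$ and $\langle y,v\rangle<0$ for the other vertices. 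We can realize this because $O$ is interior to $\pgon$, which is what makes rescaling legitimate. Perturbing $y$ along $L_{\widehat\alpha}$ keeps it in $\polar$, so $L_{\widehat\alpha}\cap\polar$ is a segment of positive length, i.e. an edge. Conversely, each edge of $\polar$ lies on some $L_v$ with $v$ a vertex, because the constraints are exactly those; and distinct points of $\AA\setminus\{O\}$ give distinct polar lines. So if $L_{\widehat\alpha}$ supports an edge, then $\widehat\alpha=v$.

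\emph{Boundary and corners.} If $\widehat\alpha\in\partial\pgon$, it lies on an edge $[v,w]$ or is a vertex. Write $\widehat\alpha=tv+(1-t)w$ with $t\in[0,1]$; this is an affine combination, so it stays in $\AA$. The corner $x=L_v\cap L_w$ of $\polar$ satisfies $\langle x,\widehat\alpha\rangle=0$, so $\widehat H_\alpha$ passes through a corner. Conversely, suppose $\widehat H_\alpha$ meets $\polar$ at a corner $x=L_v\cap L_w$ with $v,w$ adjacent vertices. Then $\langle x,\widehat\alpha\rangle=0$, while $\langle x,\cdot\rangle\le 0$ on $\pgon$ with equality exactly on the edge $[v,w]$. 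We also need $\widehat\alpha\in\pgon$. This is the main obstacle: roots outside $\pgon$ exist by \cref{lem:rootpolygoncomplement}, so a root outside $\pgon$ could a priori lie on $L_x$ beyond the edge. We would resolve this by interpreting ``intersects $\polar$ at a corner'' as meaning that $\widehat H_\alpha$ is a supporting line of $\polar$ touching it at the corner. Then $\langle y,\widehat\alpha\rangle\le 0$ for all $y\in\polar$, which gives $\widehat\alpha\in\pgon^{\diamond\diamond}=\pgon$. Combined with $\langle x,\widehat\alpha\rangle=0$, this places $\widehat\alpha$ on the edge $[v,w]\subseteq\partial\pgon$.
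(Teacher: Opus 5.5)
Your proof is correct. The paper states this lemma without proof, relying on standard polar duality and the identity $P^{\diamond\diamond}=P$ recorded just before it, and your argument is exactly the routine verification it leaves implicit.

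Your supporting-line reading of the last clause is the right one, not merely a convenient one. The literal reading ``$\widehat H_\alpha\cap P^\diamond$ contains a corner'' fails whenever an edge $[v,w]$ of $P$ lies on a line carrying a further rescaled root $\widehat\alpha\notin[v,w]$; this happens, e.g., for a cusp edge not ending at the extreme roots of its rank-$2$ subsystem. Under your reading, two small points should be stated explicitly. In the forward direction, $\widehat H_\alpha$ is supporting simply because $\widehat\alpha\in P$. In the converse, the side containing $P^\diamond$ is fixed by $O\in P^\diamond$ and $\langle O,\widehat\alpha\rangle=-1$.
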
 

Let $\mathrm{int}(\polar)$ denote the interior of $\polar$. 

\begin{lemma}\label{cuts-vs-outside}
  Let $\pgon$ be a root polygon.  For every $\alpha\in \Phi^+$, we have 
  $\widehat\alpha\not\in \pgon$ if and only if $\widehat H_\alpha\cap 
  \mathrm{int}(\polar)\neq\emptyset$. 
\end{lemma}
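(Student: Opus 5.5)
We need to prove that for $\alpha\in\Phi^+$, $\widehat\alpha\notin\pgon$ holds exactly when $\widehat H_\alpha$ meets $\mathrm{int}(\polar)$. Here $\widehat H_\alpha = L_{\widehat\alpha}=\{y\in\AA : \langle \widehat\alpha, y\rangle = 0\}$. The plan is a direct polar-duality argument. It uses the bipolar identity $\pgon^{\diamond\diamond}=\pgon$, valid since $\DD\subseteq\pgon\subseteq\conv(\widehat\alpha_1,\widehat\alpha_2,\widehat\alpha_3)$, together with the facts from \cref{lem:polar} that $\polar$ is a convex polygon with $C_0\subseteq\polar\subseteq\DD$. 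In particular $\polar$ has nonempty interior, and $O\in\mathrm{int}(\polar)$ because $O$ lies in the interior of $C_0$.

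For the direction ``$\Leftarrow$'', suppose $\widehat\alpha\in\pgon$. By definition of $\polar$, every $x\in\polar$ satisfies $\langle x,\widehat\alpha\rangle\le 0$, so the linear functional $f(y)=\langle \widehat\alpha,y\rangle$ is $\le 0$ on $\polar$. Since $f$ is affine on $\AA$ and not constant (because $\widehat\alpha\neq O$), it cannot vanish at an interior point of a set on which it is $\le 0$. If it did, moving slightly in the direction of increasing $f$ would give a point of $\polar$ with $f>0$. Hence $\widehat H_\alpha\cap\mathrm{int}(\polar)=\emptyset$.

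For the direction ``$\Rightarrow$'', suppose $\widehat\alpha\notin\pgon$. By bipolarity, $\widehat\alpha\notin\pgon^{\diamond\diamond}$, so there is some $x\in\polar$ with $\langle x,\widehat\alpha\rangle>0$. We also need a point of $\polar$ where $f<0$. Take $O$: since $\widehat\alpha\in\AA$, we have $\langle O,\widehat\alpha\rangle=-1<0$. The open segment from $O$ to $x$ lies in $\mathrm{int}(\polar)$, because $O$ is interior and $\polar$ is convex. The function $f$ changes sign along this segment, so by the intermediate value theorem it vanishes at some point of the open segment. That point lies in $\widehat H_\alpha\cap\mathrm{int}(\polar)$.

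The only delicate point is justifying the bipolar identity and that $O\in\mathrm{int}(\polar)$. The paper asserts $\pgon^{\diamond\diamond}=\pgon$ in the stated range, and this is standard: $\pgon$ is a compact convex set containing $O$ in its interior, so it is an intersection of half-planes $\{y:\langle x,y\rangle\le0\}$, one for each supporting line $L_x$ not through $O$. Interiority of $O$ in $\polar$ follows from the containment of the identity chamber given by \cref{lem:polar}. Alternatively, one checks directly that $\langle O,p\rangle=-1<0$ for every $p\in\pgon$, and by compactness of $\pgon$ a neighborhood of $O$ still pairs nonpositively with all of $\pgon$.
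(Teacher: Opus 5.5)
Your proof is correct and follows essentially the paper's argument. The paper also uses the definition of $P^\diamond$ for one direction, and for the other uses bipolarity plus a sign change along a segment starting at an interior point of the identity chamber. Your choice of that point as $O$, where $\langle O,\widehat\alpha\rangle=-1$ for every rescaled root, is a slight simplification: it makes the sign immediate without appealing to positivity of $\alpha$.
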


\begin{proof}
  Since $\pgon=\pgon^{\diamond\diamond}$, we know that $\widehat\alpha\in \pgon$ 
  if and only if $\langle x,\widehat\alpha\rangle\leq 0$ for every $x\in \polar$. 

  Suppose first that $\widehat\alpha\in \pgon$.  Then the affine-linear function
  $x\mapsto \langle x,\widehat\alpha\rangle$ is nonpositive on $\polar$.  In fact,
  it is strictly negative on $\mathrm{int}(\polar)$. Hence, $\widehat H_\alpha$ 
  does not meet $\mathrm{int}(\polar)$. 

  Conversely, suppose that $\widehat\alpha\notin \pgon$.  Then there is some 
  point $x\in \polar$ such that $\langle x,\widehat\alpha\rangle>0$.  
  Choose a point $x_0$ in the interior of the identity chamber, which lies in 
  $\mathrm{int}(\polar)$ by \cref{lem:polar}.  Since $\alpha$ is a positive root,
  we have $\langle x_0,\widehat\alpha\rangle<0$.  The line segment from $x_0$ to
  $x$ lies in $\polar$, and the function $y\mapsto \langle y,\widehat\alpha\rangle$ 
  changes sign along this line segment. Therefore, the line segment meets 
  $\widehat H_\alpha$.  The intersection point lies in $\mathrm{int}(\polar)$ 
  because every point of the segment except possibly $x$ is in the interior 
  of $\polar$.  Thus, $\widehat H_\alpha\cap \mathrm{int}(\polar)\neq\emptyset$.
\end{proof}

\begin{corollary}\label{finite-regions}
  Let $\pgon$ be a root polygon.  Only finitely many hyperplanes
  $\widehat H_\alpha$ with $\alpha\in\Phi^+$ intersect
  $\mathrm{int}(\polar)$.  Consequently, the restriction of the
  Coxeter arrangement to $\polar$ has only finitely many regions meeting
  $\mathrm{int}(\polar)$.  Equivalently, only
  finitely many faces of $\mathbb D\setminus \Gamma$ intersect
  $\mathrm{int}(\polar)$.
\end{corollary}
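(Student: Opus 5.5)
The first assertion follows by combining \cref{cuts-vs-outside} with \cref{lem:rootpolygoncomplement}. By \cref{cuts-vs-outside}, a positive root $\alpha$ has $\widehat H_\alpha$ meeting $\mathrm{int}(\polar)$ exactly when $\widehat\alpha\notin\pgon$. By \cref{lem:rootpolygoncomplement}, only finitely many rescaled roots lie in $\AA\setminus\pgon$. Rescaled roots are in bijection with positive roots, so only finitely many $\alpha\in\Phi^+$ have $\widehat H_\alpha$ meeting $\mathrm{int}(\polar)$.

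For the second assertion, let $\widehat H_{\beta_1},\dots,\widehat H_{\beta_m}$ be the finitely many hyperplanes meeting $\mathrm{int}(\polar)$. A region of the Coxeter arrangement that meets $\mathrm{int}(\polar)$ is separated from any other such region only by these hyperplanes. The reason is that the two regions contain points $x,y\in\mathrm{int}(\polar)$, and by convexity the segment $[x,y]$ stays in $\mathrm{int}(\polar)$, so any hyperplane separating $x$ from $y$ must meet $\mathrm{int}(\polar)$. Hence such a region is determined by its sign vector on $\beta_1,\dots,\beta_m$, and there are at most $2^m$ of them. One should also note that the regions are the chambers $w\cdot\fund$ (or their interiors). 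The identity chamber lies in $\polar$ by \cref{lem:polar}, so this count is genuinely a count of chambers meeting $\mathrm{int}(\polar)$.

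For the equivalent formulation in terms of faces of $\DD\setminus\Gamma$, I would use the duality between the triangular tiling and the Cayley graph. Each face of $\DD\setminus\Gamma$ is the convex region bounded by the cycle or path of centers of a coset $w\langle s_i,s_j\rangle$. Such a face is contained in the union of the chambers of that coset, namely those chambers sharing the vertex (possibly ideal) of the tiling dual to the face. So if a face meets $\mathrm{int}(\polar)$, then some chamber of its coset meets $\mathrm{int}(\polar)$ in an open set. Each chamber lies in only three rank $2$ cosets, so finitely many chambers give finitely many faces. Conversely, each chamber meeting $\mathrm{int}(\polar)$ is covered by the closures of at most three faces together with pieces of the Cayley graph, so the two finiteness statements are equivalent.

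The one point needing care is an infinite face: its coset contains infinitely many chambers accumulating at an ideal point. This causes no difficulty, because we only need one chamber of the coset to meet $\mathrm{int}(\polar)$, and that already bounds the number of faces.
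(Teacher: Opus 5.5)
Your argument for the first assertion is the paper's own proof: combine \cref{cuts-vs-outside} with \cref{lem:rootpolygoncomplement}. Your sign-vector count and chamber-to-face duality correctly fill in the ``consequently'' and ``equivalently'' parts that the paper leaves implicit, and the chamber-to-face direction is the one actually used in \cref{get-to-source-eventually}. One small caveat: your ``conversely'' sentence does not show that finitely many faces force finitely many chambers, because the closure of an infinite face contains infinitely many chambers. That direction is not needed for the corollary, though.
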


\begin{proof}
  By \cref{cuts-vs-outside}, every hyperplane that intersects
  $\mathrm{int}(\polar)$ is indexed by a positive root
  $\alpha$ such that $\widehat\alpha\notin \pgon$. 
  By \Cref{lem:rootpolygoncomplement}, a root polygon contains all
  but finitely many rescaled positive roots, so there are only finitely many
  such $\alpha$.  Hence, only finitely many reflecting hyperplanes meet
  $\mathrm{int}(\polar)$.
\end{proof}

\section{The Polygon Lemma} \label{sec:polygonlemma}

In this section we will prove an important lemma about the structure of 
parabolic-biclosed sets, namely that their restriction to the boundary of 
any root polygon $\pgon$ is separable. We first reframe this result in terms 
of colors of roots on $\partial\pgon$. Fix a decomposition $\Phi^+ = B\sqcup R$ 
of the positive roots into a parabolic biclosed set and its complement. 
We refer to the roots in $B$ as \emph{blue} and the roots in $R$ as \emph{red}. 
We similarly attribute colors to the rescaled roots $\resroots$. 

Given a root polygon $\pgon$, we say that rescaled roots 
$\widehat\alpha,\widehat\beta\in \partial \pgon$ are \dfn{consecutive} in 
$\partial\pgon$ if the line segment between $\widehat\alpha$ and $\widehat\beta$ 
is contained in $\partial\pgon$ and contains no other root.
A \dfn{color change} in $\partial \pgon$ is either a pair of consecutive 
roots in $\partial \pgon$ that are different colors or a point in $\partial \pgon$ 
that is the limit point of both red rescaled roots in $\partial \pgon$ and blue
rescaled roots in $\partial \pgon$. Since each edge of $\pgon$ spans a line that 
supports a rank $2$ parabolic subsystem of $\Phi$, there is at most one color change 
along each edge of $\pgon$. We now state the main result of this section. 

\begin{lemma}[Polygon Lemma]\label{lem:polygon-lemma}
  For any root polygon $\pgon$, there are at most two color 
  changes in $\partial \pgon$. 
\end{lemma}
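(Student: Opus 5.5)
We plan to dualize the problem into the hyperbolic plane and argue with a path in the Cayley graph adapted to $\pgon$, a ``snake'' adapted to the polygon in the spirit of \cite{BDHMMS25}.

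By \cref{lem:polar}, the vertices of $\pgon$ correspond to the hyperplanes supporting edges of $\polar$, and the roots on $\partial\pgon$ correspond to hyperplanes meeting $\polar$ only at a corner. By \cref{finite-regions}, only finitely many chambers meet $\mathrm{int}(\polar)$. Let $K$ be the union of these chambers, together with the finitely many faces of the Cayley graph meeting $\mathrm{int}(\polar)$.

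\textbf{Step 1: reduce to the identity side.} Write $\widehat{\Inv}$ for the finitely many roots outside $\pgon$; these are exactly the hyperplanes cutting $\mathrm{int}(\polar)$. Each edge $E$ of $\pgon$ lies on a line $L_E$ supporting a rank~$2$ parabolic subsystem $w_E\Phi_J$. By \cref{lem:cusps} and the discussion in \cref{rem:face-lines}, this subsystem corresponds to a face $F_E$ of the Cayley graph. That face is a finite $2k$-gon, or an infinite face whose ideal point is the tangency point when $L_E$ is tangent to $\partial\DD$. The corners of $\polar$ are the centers of these faces, or their ideal points. The roots on $E$ are the edge labels of $\partial F_E$, read in the linear order of \cref{rem:face-lines}. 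Parabolic biclosedness, which is preserved under the $W$-action by \cref{lem:action}, means that along each $F_E$ the coloring is an initial/terminal section. Equivalently, there is at most one color change on each edge, and we can record it as a ``cut point'' on $\partial F_E$.

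\textbf{Step 2: the snake nest.} Consider the cyclic sequence of faces $F_{E_1},\dots,F_{E_m}$ around $\partial\polar$. Consecutive faces share a root at the common vertex of $\pgon$, that is, a common hyperplane and hence a common Cayley-graph edge direction. Going around $\pgon$, each color change sits between two consecutive roots and corresponds to a ``turning point'' on $\partial F_{E_i}$.

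The plan is to produce, for each such turning point, a path in the Cayley graph inside $K$. This path starts from the source of $F_{E_i}$ and goes to the vertex of $\partial F_{E_i}$ at the color change. Its inversion set is colored so that the relevant set $w\cdot B$ is still parabolic biclosed. We then translate by $w$, using \cref{lem:action}, to move the color change to a standard position. The paths for different edges must be glued into a nested family (a ``nest'') of snakes. The idea is an induction on the finite number of chambers in $K$ meeting $\mathrm{int}(\polar)$, peeling off one hyperplane of $\widehat{\Inv}$ at a time. Acting by a simple reflection $s$ adjacent to the current position replaces $\pgon$ by a root polygon $s\pgon$ (perhaps after adjoining $\widehat\alpha_s$ as a vertex). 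This move changes the coloring only at one root, so the number of color changes is controlled.

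\textbf{Step 3: base case and the contradiction.} In the base case $\polar$ meets no hyperplanes, so $\polar=C_0$ and $\pgon$ is the simple-root triangle. Along its three sides, parabolic biclosedness for each of the three rank~$2$ standard parabolics, together with the fact that the simple roots are shared vertices, gives at most two color changes. There are only three vertices, each with a color, and each side changes color at most once, so the count of changes is even and at most three, hence at most two.

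For the inductive step, assume at least three color changes occur. The snakes in the nest then give three disjoint bi-infinite-ish paths, each separating red inversions from blue ones. Two of them must cross, and at the crossing chamber $v$ the inversion set $\Inv(v)$ contains a rank~$2$ subsystem whose roots are colored red, blue, red along a line. This violates parabolic biclosedness.

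The main obstacle is Step 2: making the snakes for different edges of $\pgon$ compatible, so that they nest rather than cross arbitrarily, and controlling how a color change moves when a reflection is applied at a corner of $\polar$. This is especially delicate when a vertex of $\pgon$ lies on $\partial\DD$, at an infinite face, where a color change can be a limit point rather than a pair of consecutive roots. I would first treat the case where $p,q,r$ are finite, and then handle the ideal corners by truncating the infinite faces.
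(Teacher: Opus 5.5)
Your proposal has a genuine gap. Everything that would actually bound the number of color changes is deferred to Step~2, which you leave as ``the main obstacle.'' The mechanisms you sketch around it cannot work as stated.

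\textbf{The reflection move is neutral.} When $\widehat\alpha_s\notin\pgon$, so that $s\pgon$ is again a root polygon, $(s\pgon)^\diamond=s(\polar)$ meets exactly as many chambers as $\polar$, since $s$ is an isometry of $\hyp^2$. By \cref{lem:action}, the $s\cdot B$-coloring of $\partial(s\pgon)$ is just the transported $B$-coloring of $\partial\pgon$. So this move is a change of coordinates and cannot drive an induction.

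\textbf{The shrinking move is not controlled.} The move that really shrinks $\polar$ is adjoining a root $\widehat\gamma\notin\pgon$. It yields (changes on $\partial\pgon$) $\le$ (changes on $\partial\pgon'$) cleanly when $\widehat\gamma$ replaces a single edge of $\pgon$, i.e.\ when $\widehat H_\gamma$ cuts off exactly one corner of $\polar$. Such a $\gamma$ need not exist. In $\Cox(2,3,7)$, take $\polar$ to be the star of the order-$7$ vertex of the identity chamber; its dual is a root polygon. The star is a heptagon with corners at order-$3$ vertices. The only reflecting lines meeting its interior are the seven lines through that vertex, and each leaves three corners strictly on the side away from the identity chamber. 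So adjoining any root outside $\pgon$ replaces three edges by two. Two vertices of $\pgon$ then disappear into the interior, and the edge-by-edge parabolic information on $\partial\pgon'$ says nothing about their colors.

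\textbf{Step~3 does not rescue this.} The paths you describe are disjoint by your own description, so ``two of them must cross'' has no basis. Even at a crossing, nothing would force a red--blue--red pattern on a rank-$2$ line. Also, color changes around a closed boundary come in even number, so ``at least three'' really means at least four.

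What is missing is an \emph{orientation} argument that counts crossings of $\partial\polar$ directly, with no induction on $\polar$. In the paper, the snake nest is built face by face. In each bichromatic face of $\Gamma$, one draws segments joining color-change vertices, each oriented toward the source or sink of that face. \Cref{lem:identity} and the $W$-action then give \cref{lem:orientation}: every non-ideal color-change vertex has exactly two segments, one entering and one leaving. The exceptions are $v_{\id}$, where both enter, and the $v_w$ with $R=\Inv(w)$ or $B=\Inv(w)$, where both leave.

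Next, one chooses the segments in each face meeting $\partial\polar$ so that the nest crosses $\partial\polar$ only at corners. It crosses at a corner exactly when the two vertices of $\pgon$ dual to the edges there have different colors (\cref{lem:compatiblesnakes}, \cref{cor:strongcolorchange}). Because the source of such a face lies in $\polar$, the orientation rule shows a snake can enter $\polar$ but never leave it. Only finitely many faces meet $\mathrm{int}(\polar)$ (\cref{finite-regions}), so every entering snake must terminate. The only possible terminus is $v_{\id}$, which absorbs just two segment ends. Hence there are at most two crossings, i.e.\ at most two color changes.

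Your Step~1 (one change per edge, and corners of $\polar$ at centers or ideal points of faces) matches the paper's reduction. Everything after it needs to be replaced by this argument.
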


If there are two color changes in $\partial \pgon$, then a line 
through these color changes separates $B\cap\partial \pgon$ and 
$R\cap\partial \pgon$.  Otherwise, by \cref{lem:polygon-lemma}, 
the boundary of $\pgon$ is monochromatic.  We will see that this 
case only occurs when $B$ or $R$ is finite.

Our proof of \Cref{lem:polygon-lemma} will rely heavily on certain 
features of the Cayley graph $\Gamma$. We transfer the $2$-coloring 
of the roots to a coloring of the edges of $\Gamma$ by giving the edge
$e$ the same color as $\alpha_e$. As we did for root polygons, we define
a \dfn{color change} in $\Gamma$ to be a pair of edges with different 
colors that share a common vertex $v$ and face $F$.  We say the color 
change \dfn{occurs at $v$ in $F$}.

\begin{observation}[Opposite Edges]\label{opposite-edges}
  Let $F$ be a face of $\Gamma$.  When $F$ is an infinite face, the edges 
  in $\partial F$ correspond to distinct roots.  When $F$ is a finite face
  of $\Gamma$, it has an even number of edges (and it is regular in the 
  intrinsic hyperbolic metric).  The hyperplane that passes through any edge 
  $e$ also passes through the center of $F$ and through the opposite edge $e'$ 
  in the polygon $F$.  In particular, $e$ and $e'$ determine the same root 
  $\alpha_e=\alpha_{e'}$ and thus opposite edges of $F$ always have the same color.
\end{observation}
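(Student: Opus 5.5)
The approach is to reduce to a standard parabolic coset and then read everything off from the action of a dihedral group on its own Cayley graph. A face $F$ of $\Gamma$ has boundary vertices $\{g\cdot O : g\in wW_J\}$ for some coset $wW_J$, where $W_J=\langle s_i,s_j\rangle$. Left multiplication by $w^{-1}$ is an isometry of $\hyp^2$ that sends Cayley edges to Cayley edges, since the edges are $\{u,us\}$ and $w$ acts on the left. It also sends the reflecting hyperplane dual to an edge to the hyperplane dual to the image edge. So it suffices to treat $F$ with boundary $W_J\cdot O$.

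First I record which root belongs to each edge. The edge $\{g,gs\}$ with $g\in W_J$ and $s\in\{s_i,s_j\}$ is crossed by the hyperplane separating $g\cdot O$ from $gs\cdot O$. This is $H_{g\cdot\alpha_s}$, and its reflection is $gsg^{-1}=(gs)s(gs)^{-1}$, which does not depend on the chosen endpoint. Thus every boundary edge gives a reflection $t_e\in W_J$, and $\alpha_e$ is the positive root of $t_e$. In the dihedral Cayley graph (a $2k$-cycle or a bi-infinite path), $t_e$ acts by left multiplication. It swaps the two endpoints of $e$, so it acts on the cycle or path as the flip about the midpoint of $e$.

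In the infinite case ($k=\infty$), the boundary is a bi-infinite path. A nontrivial flip of a path fixes at most one edge setwise. If $t_e=t_{e'}$, then this reflection flips both $e$ and $e'$, which forces $e=e'$. Hence distinct edges give distinct reflections, and so distinct roots.

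In the finite case, the boundary is a cycle of length $2k$, so the number of edges is even. Regularity comes from the symmetry group. Let $c$ be the common fixed point of $W_J$, the intersection of its mirrors, which lies in $\hyp^2$. Then $W_J$ acts by isometries fixing $c$ and simply transitively on the vertices, so all edges have equal length. Its reflections realize the full dihedral symmetry of the vertex cycle, so all angles are equal and $c$ is the center. Each mirror $H_{t_e}$ passes through $c$ and is perpendicular to $e$ at its midpoint, since $t_e$ swaps the endpoints of $e$. Because $F$ is convex and contains $c$ in its interior, the line $H_{t_e}$ meets $\partial F$ in exactly two points. Equivalently, the flip $t_e$ of the $2k$-cycle fixes exactly two edges setwise, namely $e$ and the edge $e'$ at distance $k-1$ from it. Therefore $H_{t_e}$ also passes through $e'$, and $\alpha_e=\alpha_{e'}$. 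The equality of colors follows at once, because an edge is colored by its root.

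The only step that needs any care is matching the combinatorial statement ("the flip fixes exactly two opposite edges") with the geometric one (the mirror passes through the center and crosses the opposite edge). I would settle it by counting: there are $2k$ edges and only $k$ reflections in $W_J$. Each edge is sent to a reflection, and each reflection fixes exactly two edges of the cycle, namely the edge at its own midpoint and the antipodal one. So each reflection corresponds to exactly one antipodal pair of edges.
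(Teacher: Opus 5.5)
Your argument for the main content is correct, and it takes a different route from the paper. The paper gives no proof of this observation. It relies on the geometric picture from the definition of the Cayley graph and \Cref{rem:face-lines}: finite faces are regular $2k$-gons whose mirrors pass through the center. You instead obtain evenness, distinctness of roots on infinite faces, and $\alpha_e=\alpha_{e'}$ purely combinatorially. The reflection $t_e=gsg^{-1}$ acts on the $2k$-cycle (resp.\ bi-infinite path) as the flip about the midpoint of $e$. That flip fixes setwise exactly $e$ and its antipode (resp.\ only $e$). Finally, $t_e g'=g's$ forces $t_e=t_{e'}$ by simple transitivity. This part needs no metric input and is cleaner than the paper's picture.

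There is a genuine gap in the parenthetical regularity claim.
\begin{itemize}
\item Simple transitivity of $W_J$ on the vertices of $\partial F$ shows the vertices are equidistant from $c$ and all angles are equal. It does not show the edges have equal length. Left multiplication preserves edge labels, so $W_J$ has two orbits on the edges of $\partial F$: the $s_i$-edges and the $s_j$-edges.
\item Your claim that the reflections of $W_J$ realize the full dihedral symmetry of the vertex cycle is false. We have $|W_J|=2k$, which is index two in the order-$4k$ symmetry group of a $2k$-cycle. As you yourself observe, each reflection of $W_J$ acts as a flip through edge midpoints, never through a vertex.
\item Concretely, take $x\in\fund$ not equidistant from $H_{\alpha_i}$ and $H_{\alpha_j}$. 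Then $W_J\cdot x$ is an equiangular $2k$-gon whose side lengths alternate between $2d(x,H_{\alpha_i})$ and $2d(x,H_{\alpha_j})$; for $k=2$ it is a non-square ``rectangle''. So regularity cannot come from the group action alone.
\end{itemize}

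The missing input is that $O$ is the incenter of $\fund$. Since $\langle\alpha_i,O\rangle=\langle\alpha_j,O\rangle$ and $Q(\alpha_i)=Q(\alpha_j)=1$, we get $d(O,H_{\alpha_i})=\sinh^{-1}\lvert\langle O,\alpha_i\rangle\rvert=d(O,H_{\alpha_j})$. Hence $d(g\cdot O,gs\cdot O)=2\,d(O,H_{\alpha_s})$ is the same for every edge of $\Gamma$. Combined with vertex-transitivity, this gives regularity. Nothing else in your proof uses regularity, so the rest stands.
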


\begin{figure}
  \centering
  \includegraphics[height=4.012cm]{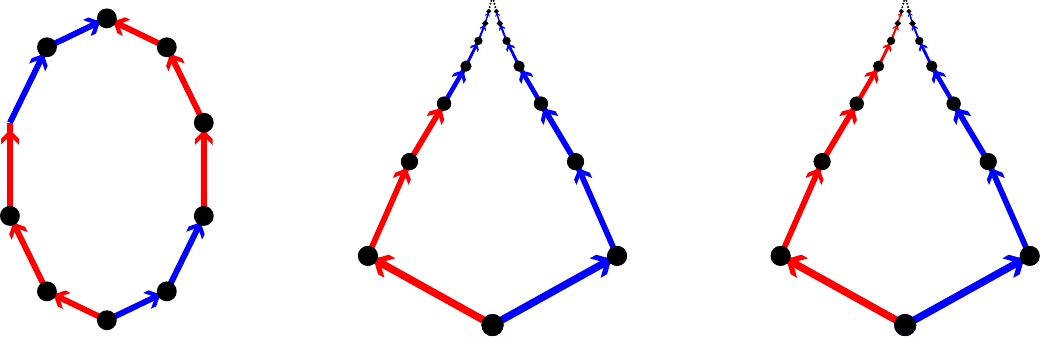}
  \caption{The different color patterns on a bichromatic face.}
  \label{fig:facecolors}
\end{figure}

If $F$ is a face of the Cayley graph whose edges all have the same color, 
then we say that $F$ is a \dfn{monochromatic face}; otherwise $F$ is a 
\dfn{bichromatic face}. Combining \Cref{opposite-edges,rem:face-lines}
leads to the following observation, illustrated in \Cref{fig:facecolors}.

\begin{observation}
  Let $F$ be a bichromatic face of the Cayley graph.
  
  If $F$ is finite, then there must be a color change at both the source 
  vertex of $F$ and the sink vertex of $F$.  Moreover, on each of the two 
  paths from the source of $F$ to the sink of $F$, there must be exactly 
  one color change. In particular, $F$ has exactly four vertices with color 
  changes. 
  
  If $F$ is instead an infinite bichromatic face, then there must be a color 
  change at the source vertex of $F$. Moreover, exactly one of the following 
  two possibilities occurs:
  \begin{itemize}
    \item Among the two unbounded paths from the source vertex of $F$, one 
    has a (unique) color change and the other does not.
    \item Among the two unbounded paths from the source vertex of $F$, one 
    is entirely red and the other is entirely blue.
  \end{itemize}
\end{observation}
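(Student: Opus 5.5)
The plan is to read everything off the rank~$2$ parabolic subsystem attached to the face $F$, using that $B$ is parabolic biclosed. The roots of the edges of $\partial F$ are the positive roots of a rank~$2$ parabolic subsystem $\Phi'$ supported on a line in $\AA$ (\Cref{rem:face-lines}). By \Cref{ex:rank-2}, these positive roots carry a total order $\prec$, and separability of $B\cap\Phi'$ in $\Phi'^+$ says exactly that $B\cap(\Phi')^+$ is an initial or a terminal section of $\prec$. If $F$ is bichromatic, this section is nonempty and proper, so the colors along $\prec$ form one block of one color followed by one block of the other. In particular, the $\prec$-minimal and $\prec$-maximal roots have different colors. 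Note also that each vertex of $\partial F$ lies on exactly two edges of $\partial F$. Hence a color change at $v$ in $F$ just means that the two edges of $\partial F$ at $v$ have different colors.

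\emph{Finite case.} Let $F$ be a $2k$-gon and let the roots of $\Phi'^+$ be $\beta_1\prec\cdots\prec\beta_k$.
\begin{enumerate}
\item By \Cref{rem:face-lines}, the two edges at the source carry the extreme roots $\beta_1$ and $\beta_k$, and the roots met along each source-to-sink path appear sequentially along the line. Each path has $k$ edges, and by \Cref{opposite-edges} it meets each root exactly once. So one path reads $\beta_1,\dots,\beta_k$ and the other reads $\beta_k,\dots,\beta_1$.
\item The two edges at the sink are therefore the last edges of the two paths, namely $\beta_k$ and $\beta_1$.
\item Since $\beta_1$ and $\beta_k$ have different colors, there is a color change at both the source and the sink.
\item Along each path the color sequence is a prefix of one color followed by the other color. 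So there is exactly one index $i$ at which consecutive edges differ, and this happens at an interior vertex of that path.
\item The two paths have disjoint interiors, so we get exactly four vertices with color changes: the source, the sink, and one interior vertex on each path.
\end{enumerate}

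\emph{Infinite case.} Let the two unbounded paths from the source carry $\beta_1,\beta_2,\dots$ and $\gamma_1,\gamma_2,\dots$. By \Cref{opposite-edges} these roots are distinct, and by \Cref{rem:face-lines,ex:rank-2} the total order is $\beta_1\prec\beta_2\prec\cdots\prec\cdots\prec\gamma_2\prec\gamma_1$. Both sequences converge to the ideal point, and they exhaust $\Phi'^+$. A nontrivial initial section of this order can be of three kinds:
\begin{itemize}
\item it ends strictly inside the $\beta$-sequence;
\item it contains all the $\beta_i$ and ends strictly inside the $\gamma$-sequence;
\item it is exactly $\{\beta_i\}$.
\end{itemize}
In every case $\beta_1$ and $\gamma_1$ have different colors, which gives a color change at the source. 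In the first two cases, one path has exactly one color change and the other path is monochromatic. In the third case, one path is entirely of one color and the other path is entirely of the other. These possibilities are mutually exclusive, since the first describes a path containing a change and the second describes two change-free paths.

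The only step needing care is the identification of the order in which the paths traverse the roots, especially in the finite case where each root appears twice. This is supplied by \Cref{rem:face-lines} together with \Cref{opposite-edges}.
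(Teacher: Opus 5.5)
Your proof is correct and takes essentially the same route as the paper, which obtains this observation by combining \Cref{opposite-edges} with \Cref{rem:face-lines}. Your main addition is to state explicitly the ingredient the paper leaves implicit: parabolic biclosedness forces each color class in $(\Phi')^+$ to be an initial or terminal section of the rank~$2$ order from \Cref{ex:rank-2}.
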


\begin{lemma}\label{lem:identity}
  If $R$ and $B$ are both nonempty, then there is a color change at $v_{\id}$.
\end{lemma}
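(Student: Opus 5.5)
We will show that if $R$ and $B$ are both nonempty, then the three edges at $v_{\id}$ do not all have the same color; these edges correspond to the simple roots $\alpha_1,\alpha_2,\alpha_3$. Two of them then differ in color, and any two edges at $v_{\id}$ lie on a common face, namely the face of the coset $\langle s_i,s_j\rangle$. Such a pair is by definition a color change at $v_{\id}$. So it suffices to show: if all three simple roots lie in $B$, then $B=\Phi^+$ (and symmetrically with $R$).

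We argue by induction on depth, using only the parabolic biclosed property. Suppose $\Delta\subseteq B$, and let $\beta\in\Phi^+\setminus\Delta$. Write $\beta=w\cdot\alpha_k$, where $w$ is chosen of minimal length among all such expressions. Take a reduced word $w=s_{i_1}\cdots s_{i_m}$. Each prefix gives a root $s_{i_1}\cdots s_{i_{j}}\cdot\alpha_{i_{j+1}}$ in $\Inv(w s_k)$.

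A cleaner way to organize the induction is to use faces. The edges of $\Gamma$ carrying root $\beta$ include an edge $e$ whose endpoint nearer $v_{\id}$ has minimal length, say $v_u$. Let $t$ be the label of $e$ and $u' = us$ a predecessor of $u$, with $\ell(u')<\ell(u)$. Consider the face $F$ of the coset $u\langle s,t\rangle$. Its source $v_x$ has length strictly less than $\ell(u)$. The edge $e$ is not incident to the source: otherwise $\beta$ would be a root on an edge at $v_x$, contradicting the minimality of $\ell(u)$.

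By \cref{rem:face-lines}, $\beta$ therefore lies strictly inside the rank~$2$ parabolic subsystem spanned by the two extreme roots $\gamma_1,\gamma_2$. These are the roots of the two edges at the source, so each has an edge at a vertex of smaller length. By induction on this minimal vertex length, $\gamma_1,\gamma_2\in B$; the base case is the length-$0$ vertex, whose edges carry the simple roots, all of which are in $B$. The subsystem is parabolic, so closedness of $B$ on it (\cref{ex:rank-2}, where biclosed means initial or terminal section) forces every root between $\gamma_1$ and $\gamma_2$, including $\beta$, to be in $B$. For infinite faces the same reasoning applies using the infinite version of \cref{rem:face-lines}.

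The main point to check is that the minimal-length endpoint of an edge carrying $\beta$ is never the source of the face used. This holds because the source has strictly smaller length than every other vertex of the face, and the edges at the source carry $\gamma_1,\gamma_2$. If $\beta$ were one of these, $\beta$ would have an edge at a shorter vertex, contradicting the choice of $e$.

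The induction shows $B=\Phi^+$, so $R=\emptyset$. Contrapositively, when both colors occur the simple roots are not monochromatic, which gives the color change at $v_{\id}$.
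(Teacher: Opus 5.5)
Your proof is correct, and it is essentially the paper's proof run in contrapositive form. The paper takes a minimal-length vertex with a color change and, via a face through a descent edge there, pushes that color change down to the face's strictly shorter source. You instead induct on the minimal length of a vertex carrying $\beta$ and pull the color up from the two source edges of the face $u\langle s,t\rangle$ using parabolic closedness. Both arguments rest on the same two facts: a face through a descent edge at $v_u$ has its source shorter than $u$, and a face whose two source edges share a color is monochromatic. The opening detour through reduced words and $\Inv(ws_k)$ is never used and can be deleted.
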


\begin{proof}
  Suppose $R$ and $B$ are both nonempty.
  Since $\Gamma$ is connected, there is a vertex with a color change. 
  Choose $w\in W$ of minimum length such that there is a color change at $v_w$.
  If $w$ is not the identity, choose a descent edge at $v_w$. At least one of the 
  two faces containing this edge has a color change at $v_w$; choose such a face $F$. 
  Then $v_w$ is not the source of $F$. But then there is a color change at 
  $v_{\yy(F)}$, contradicting the minimality of the length of $w$.
\end{proof}

To use these preliminary observations about the coloring of $\Gamma$, we 
transfer the vertex coloring of the root polygon $\pgon$ into an edge coloring 
of its polar dual $\polar$, which is inside $\DD$ by \cref{lem:polar}.
Our plan for the proof of the Polygon Lemma (\cref{lem:polygon-lemma}) 
is to construct objects called \emph{snakes} that will separate the red 
Cayley graph edges from the blue ones, doing so in such a way that either 
zero or two snakes cross the boundary of $\polar$.  This will imply that 
there are either zero or two color changes in the boundary of $\polar$, 
from which the Polygon Lemma will follow.  A similar argument was used in 
\cite{BDHMMS25} to deduce cleanliness for the rank 3 universal Coxeter group. 

\begin{figure}
  \centering
  \includegraphics[height=4.012cm]{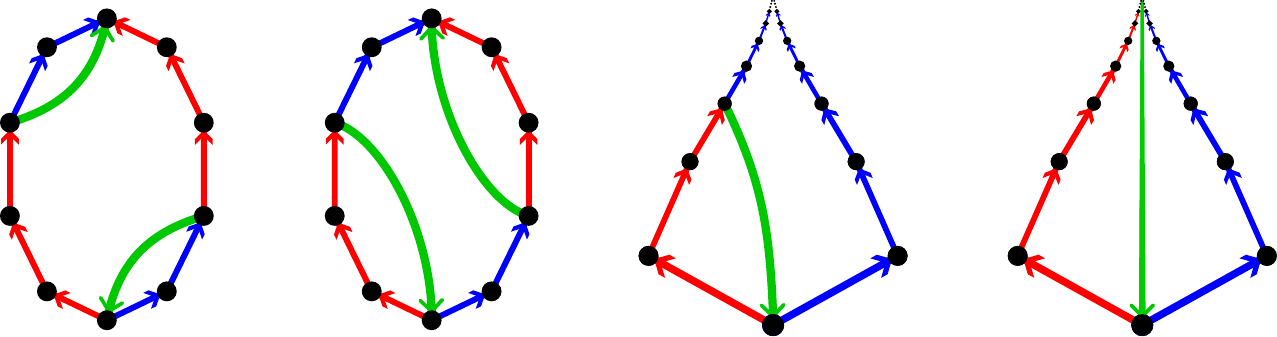}
  \caption{The possible snake segments in a bichromatic face. 
  A finite bichromatic face has two possible snake segment configurations, 
  while an infinite bichromatic face has only one.}
  \label{fig:snakesegments}
\end{figure}

We build snakes from certain components called \emph{snake segments}. The 
rules for building these pieces are illustrated in \Cref{fig:snakesegments}.
All together, these segments form a 1-manifold embedded in $\DD$ called a 
\emph{snake nest}, which decomposes into oriented components called \emph{snakes}.

\begin{definition}[Snake segment]\label{def:snakeseg}
  Let $V_{\cS}$ be the set of vertices of $\Gamma$ at which there 
  is a color change. We also include in $V_{\cS}$ any ideal points 
  that are part of an infinite bichromatic face with exactly one color 
  change.  A \dfn{snake segment} supported by a face $F$ of $\Gamma$ 
  is an oriented curve embedded in $F$ whose endpoints are color-change 
  vertices in $F$, or the ideal point of $F$ when $F$ is infinite with 
  exactly one color change, and whose interior is contained in the 
  interior of $F$.  We require that one endpoint of a snake segment is 
  either the source or sink of $F$, if it exists, and the other endpoint 
  is neither the source nor sink. A snake segment is always oriented 
  toward the source or sink of $F$.
\end{definition}

\begin{definition}[Snake nest]
  A \dfn{snake nest} is a directed graph $(V_{\cS},E_{\cS})$ embedded in 
  $\DD$ where $E_{\cS}$ is a maximal family of snake segments with the 
  property that any two snake segments supported by the same face do not 
  intersect, even at endpoints.
\end{definition}

Within a finite bichromatic face, there are two non-isotypic choices for 
the pair of snake segments supported by the face; see the left two diagrams 
in \Cref{fig:snakesegments}. In an infinite bichromatic face, there is a 
unique choice; see the right two diagrams in \Cref{fig:snakesegments}.
Typically, there are many possible snake nests one may associate to a given 
coloring of the edges of $\Gamma$.  We will use this flexibility later.

\begin{lemma}[Consistency of orientation]\label{lem:orientation}
  Assume both $R$ and $B$ are nonempty.
  Fix a snake nest $(V_{\cS},E_{\cS})$, and let $v\in V_{\cS}$ such that 
  $v$ is not an ideal point. Let $w\in W$ such that $v=v_w$.
  \begin{enumerate}
     \item There are precisely two snake segments incident to $v$.
     \item Both snake segments incident to $v_{\id}$ are oriented toward $v_{\id}$.
     \item If $R=\Inv(w)$ or $B=\Inv(w)$, then both snake segments incident to 
     $v_w$ are oriented away from $v_w$.
     \item If $w\neq\id,\ R\neq\Inv(w),$ and $B\neq\Inv(w)$, then one snake 
     segment incident to $v_w$ is oriented toward $v_w$ and the other snake 
     segment is oriented away from $v_w$.
  \end{enumerate}
\end{lemma}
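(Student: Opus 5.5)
The argument is local: everything happens at the vertex $v=v_w$, in the faces of $\Gamma$ around it. Since $\Gamma$ is the dual graph of the triangular tiling, $v_w$ has exactly three incident edges, with labels $s_1,s_2,s_3$, and lies in exactly three faces, one for each pair of generators. Two of the edges, say $e,e'$, bound a common face $F$, and there is a color change at $v_w$ in $F$ exactly when $e$ and $e'$ have different colors.

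Among three colored edges, the number of adjacent pairs with different colors is $0$ or $2$. If it were $0$, then $v\notin V_{\cS}$. So a color-change vertex $v_w$ has color changes in exactly two of its three faces. In each such face $F$ (bichromatic by definition), the snake-segment rules pick out exactly one segment of the nest ending at $v_w$. For a finite face, $F$ has four color-change vertices: its source, its sink, and one interior vertex on each source-to-sink path. The two segments pair these up, each segment joining the source or sink to an interior vertex. Disjointness forces a perfect matching, so each of the four vertices is the endpoint of exactly one segment. For an infinite face, the source is matched either to the unique other color-change vertex or to the ideal point, and again each endpoint is used once. Maximality of $E_{\cS}$ guarantees these segments are present. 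This proves (1).

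For the orientations, recall that segments in $F$ point toward the source or sink of $F$. So the segment in $F$ is oriented toward $v_w$ exactly when $v_w$ is the source or sink of $F$. At $v_{\id}$, every face has $v_{\id}$ as its source, since all edges are ascents. This gives (2), and \cref{lem:identity} guarantees that $v_{\id}\in V_{\cS}$.

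For general $w\ne\id$, let $D$ be the set of descent edges at $v_w$, so $|D|\in\{1,2\}$ (three descents would force a finite group). Then $v_w$ is the source of a face iff both of its edges there are ascents, and the sink iff both are descents; otherwise it is an interior vertex. This reduces (3) and (4) to analyzing which faces carry the color changes. The plan is to read the coloring at $v_w$ through $w^{-1}$: the edge with label $s$ has root $\pm w\alpha_s$. Applying \cref{lem:action}, the set $X=w^{-1}\cdot B$ is parabolic biclosed, and the colors at $v_w$ become the membership in $X$ of the simple roots, twisted by $\Inv(w)$.

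\textbf{Case $|D|=1$.} The descent edge $d$ has two ascent edges $a,a'$, and $v_w$ is the source of the face containing $a$ and $a'$. Segments are oriented away from $v_w$ in the two faces containing $d$ and toward it in the $\{a,a'\}$ face. One segment in and one out happens iff exactly one color change involves $d$. That fails iff $a,a'$ share a color and $d$ has the other color, which makes both color changes lie in the faces containing $d$. I must show this happens iff $B$ or $R$ equals $\Inv(w)$. The reverse direction is direct: $\Inv(w)$ consists of exactly the descent-edge roots together with roots below $v_w$, so if $B=\Inv(w)$ the descent edge is blue and the ascents are red. For the forward direction, I would use a minimality argument like that of \cref{lem:identity}. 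Suppose $d$ is blue and $a,a'$ are red. I want to show that every root of $\Inv(w)$ is blue and every other root is red. Consider the parabolic biclosed set $X=ws\cdot B$, where $s$ labels $d$. At $v_{ws}$ the colors become monochromatic, which leads me to show that $X$ is empty or full by connectivity, i.e. via \cref{lem:identity} applied at the identity. This step is the main obstacle, and I would check it carefully via the action formula.

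\textbf{Case $|D|=2$.} Symmetrically, $v_w$ is the sink of the face spanned by the two descents. The bad configuration is again the odd edge, here the ascent, having the color opposite to the two descents. I would treat it the same way, using $w s$ for a descent label $s$, or by the symmetric argument with roles reversed. Together the two cases give (3) and (4).
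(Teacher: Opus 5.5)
Your analysis of parts (1)--(3) is sound and follows the paper's local analysis. You use the source/sink criterion in terms of ascents and descents, and the fact that a segment points toward $v_w$ only in the face where $v_w$ is the source or sink. You also correctly reduce (4) to the claim that the ``odd edge'' pattern at $v_w$ (descent edges one color, ascent edges the other) forces $B$ or $R$ to equal $\Inv(w)$.

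That forward implication is the whole content of (4), and you leave it unproved. Worse, the route you sketch for it does not work. The set $ws\cdot B$ transports the coloring along $v_u\mapsto v_{wsu}$, with flips on $\Inv(ws)$. The pattern at $v_w$ tells you nothing about the two edges at $v_{ws}$ other than $d$, since their roots $\pm ws\alpha_t,\pm ws\alpha_u$ are not edge roots at $v_w$. Concretely, take $w=s$ and $B=\Inv(s)=\{\alpha_s\}$. The bad pattern occurs at $v_s$, yet $ws\cdot B=B$ colors the three edges at $v_{ws}=v_{\id}$ blue, red, red, which is not monochromatic.

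The fix is the translation you set up in your preamble and then abandoned. Put $X=w^{-1}\cdot B$; by \cref{lem:action} it is parabolic biclosed with complement $w^{-1}\cdot R$. The $s$-edge at $v_w$ has root $\pm w\alpha_s$, and $\alpha_s\in\Inv(w^{-1})$ exactly when $s$ is a descent of $w$. So by the definition of the action, $\alpha_s\in X$ if and only if exactly one of the following holds: ``the $s$-edge at $v_w$ is blue'' or ``$s$ is a descent of $w$''.

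Hence, if the descent edges at $v_w$ are blue and the ascent edges red, no simple root lies in $X$. The translated coloring then has no color change at $v_{\id}$. Apply \cref{lem:identity} to the parabolic biclosed partition $\Phi^+=X\sqcup w^{-1}\cdot R$; since $w^{-1}\cdot R$ contains every simple root, this forces $X=\emptyset$. Therefore $B=w\cdot\emptyset=\Inv(w)$, and the opposite coloring gives $R=\Inv(w)$. This handles $|D|=1$ and $|D|=2$ at once, so no separate argument through $ws$ is needed.
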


\begin{proof}
  (1) An edge $e$ of $\Gamma$ incident to $v$ has a different color from the 
  other two incident edges. Consequently, there is a color change at $v$ in 
  the two faces containing $e$. By the maximality of the snake nest, each of 
  those faces supports a snake segment with endpoint $v$.
  
  (2) The vertex $v_{\id}$ is the source of each of its incident faces, so 
  its snake segments are oriented towards $v_{\id}$. 

  (3) For the two remaining parts, we may assume $w\neq \id$.
  Either $w$ has two descents and one ascent, or $w$ has one descent and two ascents.
  In the former case, $v_w$ is the sink of one of its incident faces and is 
  neither source nor sink of the other two faces. In the latter case, 
  $v_w$ is the source of one of its incident faces and is neither source nor 
  sink of the other two faces. In either case, we see that there is at most 
  one snake segment incident to $v_w$ that is oriented toward $v_w$.
  
  Now assume $R=\Inv(w)$.
  If $w$ has two descents, then the face of $\Gamma$ in which $v_w$ is the sink is all red.
  If $w$ has two ascents, then the face in which $v_w$ is the source is all blue.
  In either case, the color changes at $v_w$ occur in the two faces in which $v_w$ 
  is neither source nor sink. Hence, both snake segments are oriented away from $v_w$.
  The case $B=\Inv(w)$ is similar.

  (4) Finally, we assume $w\neq\id$ and both snake segments are oriented 
  away from $v_w$. By symmetry, we may assume the roots labeling descents 
  of $w$ are red and roots labeling ascents of $w$ are blue.
  By applying $w^{-1}$ to the coloring of $\Gamma$, all edges incident to 
  $v_{\id}$ become blue. But $w^{-1}\cdot R$ is a parabolic biclosed set 
  by \cref{lem:action}, so by \cref{lem:identity}, $w^{-1}\cdot R=\emptyset$.
  Therefore, $R=w\cdot\emptyset = \Inv(w)$.
\end{proof}

\begin{corollary}[Manifold]
  A snake nest is a $1$-manifold embedded in $\DD$, possibly with boundary 
  on $\partial\DD$.
\end{corollary}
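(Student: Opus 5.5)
We need to show that a snake nest is a $1$-manifold embedded in $\DD$, possibly with boundary on $\partial\DD$. The plan is to check the local structure at each type of point: interior points of segments, finite vertices in $V_{\cS}$, and ideal points. We also need to check that distinct segments can meet only at their endpoints.

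\emph{Disjointness of segments.} Each snake segment supported by a face $F$ has its interior in the interior of $F$. Faces of $\DD\setminus\Gamma$ are disjoint open convex regions, so segments supported by different faces have disjoint interiors. By the definition of a snake nest, two segments supported by the same face do not intersect at all, not even at endpoints. Hence the union of the segments is an embedded graph whose edges meet only at points of $V_{\cS}$. Since each segment is an embedded arc, every interior point of a segment has a neighborhood homeomorphic to an open interval.

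\emph{Finite vertices.} Let $v=v_w\in V_{\cS}$ be a vertex of $\Gamma$. By \cref{lem:orientation}(1), exactly two snake segments are incident to $v$. These are supported by two distinct faces: the two faces containing the edge $e$ whose color differs from the other two edges at $v$. Near $v$ these faces are disjoint sectors, so the two segments together form an arc through $v$, and a neighborhood of $v$ in the nest is an open interval. (The orientation statements in the lemma are not needed for the manifold property. They only show that the arcs glue into consistently oriented snakes, except at $v_{\id}$ and at vertices with $R$ or $B$ equal to $\Inv(w)$.) Local finiteness is also required, and it holds because $\Gamma$ is locally finite: each face meets only finitely many segments.

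\emph{Ideal points.} An ideal point $x\in V_{\cS}$ is the ideal point of an infinite bichromatic face $F$ with exactly one color change. By the uniqueness of the snake configuration in an infinite face, exactly one segment is supported in $F$ that ends at $x$. Distinct infinite faces have distinct ideal points; this is the step requiring care. If two infinite faces shared an ideal point $x$, then the roots of both would lie on the tangent line $L_x$. By \cref{lem:cusps}, $\widehat\Phi\cap L_x$ consists of the roots of a single parabolic subsystem, which belongs to a single face. So $x$ is the endpoint of exactly one segment and is a boundary point of the $1$-manifold lying in $\partial\DD$. Points of $\partial\DD$ outside $V_{\cS}$ are not in the nest.

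The main obstacle is this ideal-point uniqueness, together with making sure that segments accumulating near $\partial\DD$ do not spoil the manifold property. Since the claim is local, only the segments containing a given point matter, and so local finiteness at the points of the nest suffices.
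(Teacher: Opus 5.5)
Your argument is the paper's argument with more detail. Within a face, the segments are disjoint arcs whose interiors lie in the open face. By \cref{lem:orientation}(1), each finite vertex of $V_{\cS}$ has exactly two segments, lying in the two faces that contain the odd-colored edge. An ideal point is the endpoint of a single segment. One small repair: \cref{lem:cusps} only says that $\widehat\Phi\cap L_x$ is the root set of \emph{some} infinite face, so ``which belongs to a single face'' needs its own reason. For instance, every infinite face with ideal point $x$ contains a horoball at $x$, two horoballs at $x$ intersect, and faces are disjoint.

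The real problem is your last paragraph, which is false for the topology that $\DD$ induces on the nest. A neighborhood of an ideal point $x$ in $\DD$ can meet infinitely many segments that do not contain $x$, and $\Gamma$ is locally finite only in $\DD^\circ$. Here is an example. In $\Cox(2,3,\infty)$, let $B$ be the inversion set of $s_2s_3s_2s_3\cdots$, which is biclosed. The face $F=\langle s_2,s_3\rangle$ has one boundary path entirely blue and the other entirely red, so its ideal point $x$ lies in $V_{\cS}$.

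Let $w_k=s_2s_3s_2\cdots$ have $k$ letters. At $v_{w_k}$ the $s_1$-edge has root $w_k\alpha_1\notin\Phi_{\{2,3\}}$, so it is red. Hence the finite face sharing the edge $v_{w_k}v_{w_{k+1}}$ with $F$ is bichromatic and carries two segments. These faces have bounded hyperbolic size and converge to $x$ in $\AA$.

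For even $k$ this face is the square $w_k\langle s_1,s_2\rangle$. Pair its source with $v_{w_{k+1}}$. Then the other segment, from $v_{w_ks_1}$ to the sink $v_{w_ks_1s_2}$, closes up into a small closed snake with the unique segment of the infinite face $w_ks_1\langle s_2,s_3\rangle$, whose only blue edge is $v_{w_ks_1}v_{w_ks_1s_2}$. These closed snakes converge to $x$, so no neighborhood of $x$ in the nest (with the topology from $\DD$) is an arc.

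So the corollary, and the paper's two-line proof, must be read intrinsically. The nest is the union of the closed segments glued at shared endpoints, mapped injectively and continuously into $\DD$. That is all the paper uses later, since snakes are treated as maximal paths and cycles traversed segment by segment. In that reading your proof is complete once you delete the local-finiteness paragraph. Its claim that accumulating segments do not matter is not needed, and in the subspace topology it is not true.
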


\begin{proof}
  Within a given face $F$ of $\Gamma$, the restriction of a snake nest to $F$ 
  is either empty or a $1$-manifold with boundary on the boundary of $F$.
  Using \cref{lem:orientation}(1), the segments glue together to give a 
  $1$-manifold embedded in $\DD$ that either has no boundary if there are no faces 
  with exactly one color change, or it has boundary contained in $\partial\DD$. 
\end{proof}

\begin{definition}[Snake]
  A \dfn{snake} is a maximal oriented path or cycle of a given snake nest.
  For any snake $\cS$ and points $u,v\in\cS$, if $\cS$ is oriented from $u$ 
  to $v$, then we say $\cS$ \dfn{travels} from $u$ to $v$.
  In particular, if $\cS$ is an oriented cycle, then $\cS$ travels from $u$ 
  to $v$ for any two of its points $u,v$.
\end{definition}

Using \Cref{lem:orientation}, we may determine all possible endpoints of snakes.

\begin{corollary}[Endpoints of snakes]\label{cor:endpoints}
  If $v$ is an endpoint of a snake, then either $v=v_{\id}$, $v$ is an ideal point, 
  or $v=v_w$ for some $w\in W$ such that either $R=\Inv(w)$ or $B=\Inv(w)$.
\end{corollary}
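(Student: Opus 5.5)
The plan is to read the corollary directly off \cref{lem:orientation}, treating ideal points and non-ideal vertices of $V_{\cS}$ separately. Throughout we assume $R$ and $B$ are both nonempty, as in \cref{lem:orientation}. If either is empty there are no color changes, the snake nest is empty, and the statement is vacuous.

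First I will make precise what an endpoint means. A snake is a maximal oriented path or cycle in the snake nest, which is a $1$-manifold by the Manifold corollary. An endpoint is therefore a vertex $v \in V_{\cS}$ at which the path stops. At such a point, either only one snake segment is incident to $v$, or the two incident segments do not concatenate into a consistently oriented path. The second situation happens when both segments point toward $v$ or both point away from it, so that the maximal oriented path must terminate at $v$.

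Next I take $v \in V_{\cS}$ not an ideal point, so $v = v_w$ for some $w\in W$. By \cref{lem:orientation}(1) there are exactly two snake segments at $v$. If $w \neq \id$, $R \neq \Inv(w)$ and $B \neq \Inv(w)$, then \cref{lem:orientation}(4) says one segment enters $v$ and the other leaves it. The snake therefore passes through $v$ as an interior point of an oriented path, and $v$ is not an endpoint. By contraposition, a non-ideal endpoint must have $w = \id$, or $R = \Inv(w)$, or $B = \Inv(w)$. Parts (2) and (3) of \cref{lem:orientation} confirm that these cases really can be endpoints, with both segments incoming or both outgoing, though this is not needed for the corollary.

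The remaining possibility is that $v$ is an ideal point, which is allowed by the statement. The only mildly delicate step is the bookkeeping about what ``endpoint'' means when two oppositely oriented segments meet, so that a point with two segments can still be an endpoint of both snakes. Once that convention is fixed, the argument is immediate from \cref{lem:orientation}.
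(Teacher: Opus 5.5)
Your proposal is correct and is essentially the paper's argument: the paper gives no separate proof and simply reads the corollary off \cref{lem:orientation}. It does so exactly as you do, noting that a non-ideal vertex $v_w$ with $w\neq\id$, $R\neq\Inv(w)$ and $B\neq\Inv(w)$ has one incoming and one outgoing segment, so it cannot be the endpoint of a maximal oriented path.
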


\begin{remark}[Comparison to snakes from previous work]
  We introduced snakes in \cite{BDHMMS25} in the case where $(W,S)$ is the 
  universal Coxeter system of rank $3$. In that case, we referred to a snake 
  nest as a pair of snakes since there were exactly two snakes for any given
  parabolic biclosed set, each of which had one endpoint at $v_{\id}$.
  In the more general setting of this paper, a snake nest may contain many snakes.
  A snake may be an oriented cycle or a path with $0$, $1$, or $2$ endpoints 
  as described by \cref{cor:endpoints}.
\end{remark}

\begin{definition}[Compatibility]
  Let $\pgon$ be a root polygon. A snake nest is \dfn{compatible} with $\pgon$ 
  if all intersection points of the snake nest with $\partial \polar$ are corners 
  of $\polar$. 
\end{definition}

\begin{figure}
  \centering
  \includegraphics[height=3.869cm]{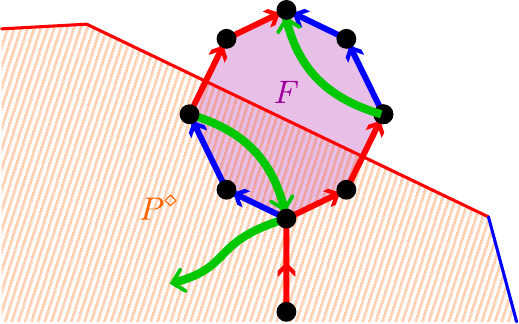}\qquad\qquad
  \includegraphics[height=5.5cm]{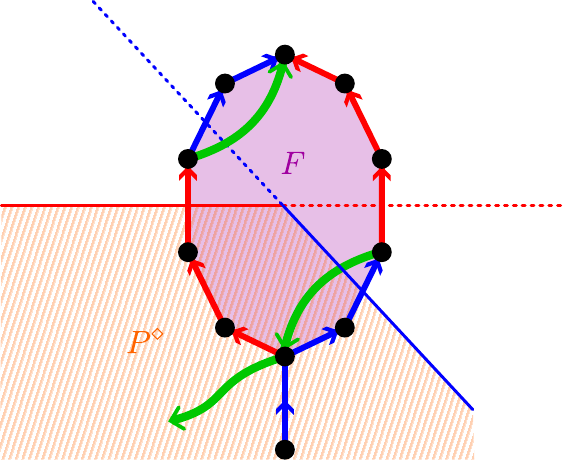}
  \caption{On the left is a face $F$ that intersects the polygon $\polar$ so that 
  $F\cap\partial\polar$ is a line segment. On the right is a face $F$ that 
  intersects the polygon $\polar$ so that $F\cap\partial\polar$ is a union of two 
  line segments.} \label{fig:proofcompatiblesnakes1}
\end{figure}

\begin{lemma}[Compatible Snakes]\label{lem:compatiblesnakes}
  For any root polygon $\pgon$, there is a choice of snake nest that is compatible 
  with $\pgon$.
\end{lemma}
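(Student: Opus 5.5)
The plan is to build the snake nest one face at a time. The only freedom in a snake nest is local. In each finite bichromatic face $F$ we must choose one of the two isotopy classes of segment pairs from \Cref{fig:snakesegments}, and inside the chosen class we may isotope the segments freely within $F$, keeping their endpoints fixed. In infinite bichromatic faces the class is forced, but isotopy is still available. Compatibility only constrains faces that meet $\partial\polar$. By \cref{finite-regions}, only finitely many faces of $\DD\setminus\Gamma$ meet $\mathrm{int}(\polar)$, so at most finitely many faces meet $\partial\polar$ in a nontrivial way. Faces disjoint from $\partial\polar$ can carry an arbitrary choice. It therefore suffices to show that, for each face $F$ meeting $\partial\polar$, the segments supported by $F$ can be drawn so that they meet $\partial\polar$ only at corners of $\polar$. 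Since segments in different faces interact only at their endpoints (vertices of $\Gamma$), these local choices glue to a global snake nest.

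Next I would describe $F\cap\polar$ for a face $F$ meeting $\partial\polar$. Both $F$ and $\polar$ are convex, so $F\cap\partial\polar$ consists of arcs of $\partial\polar$ crossing $F$. The edges of $\polar$ lie on reflecting hyperplanes $\widehat H_\alpha$ with $\widehat\alpha$ a vertex of $\pgon$, by \cref{lem:polar}. A reflecting hyperplane never enters the interior of a face of the Cayley graph except along a ``diagonal'' through the face. For a finite face, the diagonals are exactly the lines through the center joining midpoints of opposite edges (\Cref{opposite-edges}). For an infinite face, they are lines through the ideal point. So each edge of $\polar$ crossing $F$ cuts $F$ along a diagonal, and $F\cap\partial\polar$ is a union of such chords, meeting at corners of $\polar$. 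This gives the two pictures in \Cref{fig:proofcompatiblesnakes1}: one chord, or two chords meeting at a corner, which for a finite face must be the center of $F$.

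Now consider one chord. It splits $F$ into two pieces, each containing a contiguous run of boundary edges. The chord crosses the boundary of $F$ at the midpoints of two opposite edges, which have the same color. Each snake segment joins two color-change vertices of $F$, one of which is the source or sink. I would argue that the endpoints of each segment lie on the same side of the chord, so the segment can be isotoped to avoid the chord, except possibly by passing through a corner where the chord ends. In a finite face, the four color-change vertices come in two pairs: one color change on each source-to-sink path, plus the source and the sink. The opposite edges $e,e'$ cut by the chord split the boundary into two halves, and by the opposite-edge symmetry each half contains an even number of color changes. Choosing the isotopy class appropriately, I would pair vertices on the same side. When both sides contain two color-change vertices, one side holds the source and its partner, and the other holds the sink and its partner, so this pairing is one of the allowed configurations.

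In the two-chord case, both chords pass through the center of $F$, which is a corner of $\polar$. I would route a segment through the center if necessary. Compatibility allows meeting $\partial\polar$ at corners, though the two segments in a face must stay disjoint, so at most one segment can use the center. The infinite-face case is handled similarly: the chords run to the ideal point, and the segment ending at the ideal point can follow the chord up to that ideal point. I expect this parity and case analysis to be the main obstacle, particularly the two-chord configuration and the infinite faces with one color change. There I would try first to show that the color pattern forced by the opposite-edge symmetry always leaves at most one segment needing to cross, so that it can be routed through the single available corner.
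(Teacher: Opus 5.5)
Your strategy is the paper's. You choose segments face by face, note that $F\cap\partial\polar$ is either one chord through the center or two mirror segments meeting at a corner, and argue by parity. But there is a genuine gap: the proposal stops exactly where the lemma has content. The corner configuration is only flagged as ``the main obstacle.'' Even the one-chord case assumes, without proof, that the source and sink lie on opposite sides of the chord.

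The missing idea is a single dichotomy that handles both shapes at once. Let $p_1,p_2$ be the two points of $\partial F\cap\partial\polar$, and ask whether the edges of $\partial F$ containing them have the same color.
\begin{itemize}
\item If they do, every component of $F\setminus\partial\polar$ contains an even number of points of $C$. Here $C$ is the set of color-change vertices, plus the ideal point if $F$ is infinite with a single color change. For a finite face, the source chamber lies on the $O$-side of every mirror through the center, so it lies in $\polar$. The sink chamber lies on the far side of all those mirrors, so it does not. Hence the split is $2+2$ with source and sink separated, and you pair within each side.
\item If the colors differ, the configuration cannot be a single chord, because a chord meets opposite edges, which have the same color. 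So it is the corner configuration, and each side contains an odd number of points of $C$. Since $|C|\in\{2,4\}$, every admissible non-crossing matching has exactly one pair separated by $\partial\polar$. Only that segment needs to be pushed through the corner.
\end{itemize}
Opposite-edge symmetry does not control the corner case. The two crossed edges lie on different mirrors, and the only question is whether their colors agree.

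For infinite faces you only treat the case of a single color change. There the segment should run inside the wedge to the ideal corner, not along the chord, since running along it would put non-corner points of $\partial\polar$ on the snake.

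The case you omit is an infinite face with two color-change vertices whose crossed edges have different colors. There the second color change can lie beyond a chord. For instance, suppose $s_is_j$ has infinite order. Take $R=\{\alpha_i\}$ and $\pgon=\conv(\widehat\alpha_1,\widehat\alpha_2,\widehat\alpha_3)$, so that $\polar$ is the closure of the identity chamber $C_0$. Let $F$ be the infinite face at the ideal vertex $x$ where $\widehat H_{\alpha_i}$ and $\widehat H_{\alpha_j}$ meet. Then:
\begin{itemize}
\item the color changes of $F$ are at $v_{\id}$ and $v_{s_i}$, which lie on opposite sides of $\widehat H_{\alpha_i}$;
\item every point of $\widehat H_{\alpha_i}\cap F$ other than $x$ is a non-corner point of $\partial\polar$.
\end{itemize}
So the crossing segment must pass through the ideal point $x\in\partial\DD$. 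That is what the paper's ``deform this segment so that it passes through the corner'' amounts to for infinite faces. It requires allowing a snake segment to touch $\partial\DD$ at $x$, rather than keeping its interior in the interior of $F$. A complete proof along your lines has to state this explicitly.
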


\begin{proof}
  Let $\pgon$ be a root polygon, and let $V_{\cS}$ be the set of vertices defined 
  in \cref{def:snakeseg}. For the purpose of constructing a snake nest 
  compatible with $\pgon$, we choose snake segments supported by each face $F$ 
  of $\Gamma$ as follows. We first dispense with two trivial cases.
  If $F$ is monochromatic, then it does not support any snake segments.
  If $F$ does not meet $\partial \polar$, we may arbitrarily pick a maximal 
  collection of nonintersecting snake segments in that face.
    
  Hence, we assume $F$ is bichromatic and $F\cap \partial\polar$ is nonempty.
  There are two possible shapes for this intersection, illustrated in 
  \Cref{fig:proofcompatiblesnakes1,fig:proofcompatiblesnakes2}: either 
  $F\cap \partial \polar$ is a line segment, or $F\cap \partial \polar$ is 
  the union of two line segments, meeting at a corner of $\polar$. 
  In both cases, there are two points in $\partial F\cap \partial \polar$ 
  and there are two components of $F\setminus \partial \polar$. If $F$ is 
  finite, then the source and sink are in different components.

  Let $C$ be the set of vertices at which a color change occurs in $F$, 
  together with the ideal point of $F$ if $F$ is infinite with exactly one 
  color change. The size of $C$ is $2$ or $4$ depending on whether $F$ is 
  infinite or finite, respectively. To determine how to link points of $C$ 
  with snake segments, we separately consider two cases based on the colors 
  of the two points in $\partial F\cap \partial \polar$.

  \noindent\textbf{Case 1.} The two points in $\partial F\cap \partial \polar$ 
  have the same color.
    
  In this case, there is an even number of color changes in $\partial F$ on 
  either side of $\partial \polar$. In other words, $C$ is partitioned into 
  two sets of even size. If $F$ is infinite, then $|C|=2$, and we may select 
  a snake segment supported by $F$ that does not cross $\partial\polar$.

  If $F$ is finite, then $|C|=4$ and the source and sink vertices are on 
  different sides of $\partial\polar$.  Since $C$ is partitioned into two 
  sets of even size by $\partial \polar$, it follows that there are a pair 
  of snake segments supported by $F$ that do not cross $\partial\polar$.

  \noindent\textbf{Case 2.} The two points in $\partial F\cap \partial \polar$ 
  have different colors.

  In this case, $F\cap \partial \polar$ must be the union of two line segments 
  meeting at a corner of $\polar$. Moreover, there is an odd number of color 
  changes in $\partial F$ on either side of $\partial \polar$. Hence, $C$ is 
  partitioned into two sets of odd size. Thus, for any choice of 
  nonintersecting snake segments, exactly one segment has endpoints on 
  opposite sides of $\partial\polar$.  We may deform this segment so that 
  it passes through the corner of $\polar$ in $F$. 
\end{proof}

We note a corollary of the proof of \Cref{lem:compatiblesnakes}.

\begin{corollary}\label{cor:strongcolorchange}
  Pick a root polygon $\pgon$ and a compatible snake nest. Let 
  $\widehat H_\alpha,\widehat H_\beta$ be two (supporting hyperplanes 
  of) edges of $\polar$ that meet at a corner $c$ of $\polar$. Then the 
  snake nest crosses $\partial \polar $ at $c$ if and only if $\alpha$ 
  and $\beta$ have different colors. 
\end{corollary}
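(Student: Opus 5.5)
We will read this off from the case analysis in the proof of \cref{lem:compatiblesnakes}, making explicit how the colors of the two edges $\widehat H_\alpha$ and $\widehat H_\beta$ meeting at $c$ determine what happens at $c$.

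\textbf{Step 1: locate the face containing $c$.} The corner $c$ lies in $\DD$ by \cref{lem:polar}. We first argue that $c$ lies in the interior of a unique face $F$ of $\Gamma$, or else reduce to that situation. The snake nest meets $\partial\polar$ only at corners, and within each face it was chosen in the proof of \cref{lem:compatiblesnakes} relative to $F\cap\partial\polar$. The relevant configuration is the one where $F\cap\partial\polar$ is a union of two segments meeting at $c$: one segment on $\widehat H_\alpha$ and one on $\widehat H_\beta$. If $c$ lies on a vertex or edge of $\Gamma$, we perturb the picture slightly; alternatively, we note that the snake segments may be isotoped within faces so that this degenerate situation does not affect crossings. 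We flag this as a small technical point.

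\textbf{Step 2: identify the two points of $\partial F\cap\partial\polar$.} Each of the two segments exits $F$ through an edge of $\partial F$ dual to the corresponding hyperplane. Since the hyperplane $\widehat H_\alpha$ crosses $\partial F$ in an edge $e$ with $\alpha_e=\alpha$, the exit point on the $\widehat H_\alpha$ segment lies on an edge colored by $\alpha$. This uses \cref{opposite-edges}: a reflecting hyperplane through $F$ crosses exactly the edges labeled by its root. Likewise, the exit point on the $\widehat H_\beta$ segment lies on an edge colored by $\beta$. So the two points of $\partial F\cap\partial\polar$ carry the colors of $\alpha$ and $\beta$.

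\textbf{Step 3: count crossings by parity.} Going around $\partial F$, the two exit points split $\partial F$ into two arcs, one on each side of $\partial\polar$. The number of color changes on each arc (counting the ideal point where relevant) is even exactly when the two endpoint colors agree. This is Case~1 versus Case~2 of the proof of \cref{lem:compatiblesnakes}. In Case~1, the chosen segments in $F$ do not cross $\partial\polar$. In Case~2, the partition of $C$ into odd parts forces exactly one segment to cross, and it was routed through $c$.

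\textbf{Step 4: rule out other crossings at $c$.} Any crossing at $c$ must come from segments supported by faces containing $c$. The choices in those faces are exactly the ones analyzed above, so no other segment passes through $c$.

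The main obstacle is Step~1: handling corners $c$ that lie on $\Gamma$ itself, or faces $F$ meeting $\partial\polar$ near $c$ in a way not literally covered by the two shapes pictured. We would first try to show that such corners can be avoided by perturbing, or treat them by the same parity count applied to each adjacent face.
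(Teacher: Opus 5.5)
Your Steps 2 and 3 are the reasoning the paper intends; it presents this result simply as a corollary of the Case 1/Case 2 analysis in the proof of \cref{lem:compatiblesnakes}. The gap is Step 1, which you flag as the main obstacle and leave unresolved. Your proposed fix, perturbing, is not a legitimate move. $\polar$ is determined by $\pgon$, and its edges must lie on the reflecting lines $\widehat H_\alpha,\widehat H_\beta$ for the two points of $\partial F\cap\partial\polar$ to carry the colors of $\alpha$ and $\beta$. Also, the snake nest is fixed by hypothesis.

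The missing observation is that the feared situation never occurs. The corner $c$ is the intersection point of two distinct reflecting lines $\widehat H_\alpha$ and $\widehat H_\beta$.
\begin{itemize}
  \item If $c\in\DD^\circ$: a point lying on two distinct walls of the triangular tiling is a vertex of the tiling, i.e.\ the hyperbolic center of a finite face $F$ of $\Gamma$.
  \item If $c\in\partial\DD$: the edge $[\widehat\alpha,\widehat\beta]$ of $\pgon$ is tangent to $\partial\DD$ at $c$, so by \cref{lem:cusps} $c$ is the ideal point of an infinite face $F$.
\end{itemize}
This is the same identification the paper uses in the proof of \cref{lem:polygon-lemma}.

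Since $\Gamma$ meets walls only in the relative interiors of sides of chambers, $c$ never lies on $\Gamma$, and $c$ belongs to exactly one face (or is its ideal point). Hence only segments supported by $F$ can pass through $c$, which is what your Step 4 needs. Moreover, the only reflecting lines meeting the interior of $F$ are those through its center. So $F\cap\partial\polar$ is exactly the union of the two pieces of $\widehat H_\alpha$ and $\widehat H_\beta$ issuing from $c$. Each piece leaves $F$ through an edge labeled $\alpha$, respectively $\beta$: by \cref{opposite-edges} for finite $F$, and because the edges of an infinite face carry distinct roots otherwise. This justifies Step 2, and no separate treatment of degenerate corners is needed.

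A smaller point: in Steps 3 and 4 you appeal to the segments \emph{chosen} in the construction of \cref{lem:compatiblesnakes}. The statement, however, concerns an arbitrary compatible snake nest. Closing this takes two short observations.
\begin{itemize}
  \item Case 2: every admissible pair of segments in $F$ contains exactly one segment joining the two sides of $\partial\polar$. Compatibility forces it through $c$, the only corner of $\polar$ meeting $F$.
  \item Case 1 with $F$ finite: each side of $\partial\polar$ contains exactly one of the two non-source, non-sink color changes. So the other pairing would have both segments joining opposite sides, forcing two disjoint segments through the single point $c$, which is impossible.
\end{itemize}
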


\begin{figure}
  \centering 
  \includegraphics[height=5.5cm]{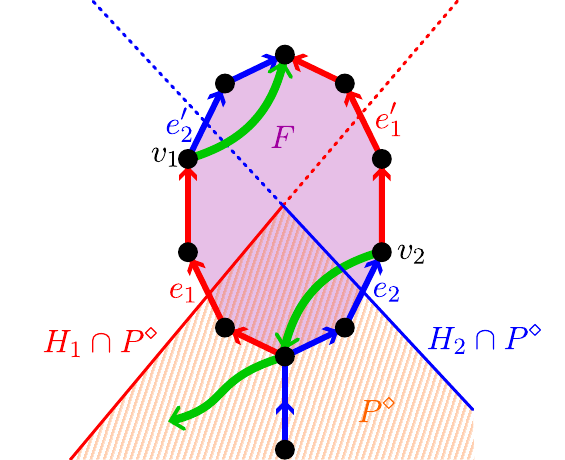}
  \caption{An illustration of how snake segments passing through 
  $\partial \polar $ must proceed from the exterior to the interior}
  \label{fig:proofcompatiblesnakes2}
\end{figure}

Given a snake nest compatible with a root polygon $\pgon$, we show 
in the following lemmas that a snake cannot travel from inside $\polar$ 
to outside $\polar$, and any snake that crosses $\partial \polar$ must 
terminate at $v_{\id}$. We then complete the proof of \cref{lem:polygon-lemma} 
by translating it into a statement about color changes on $\partial \polar$.

\begin{lemma}[Snakes Are Trapped]\label{how-snakes-can-cross-boundary}
  Choose a snake nest compatible with a root polygon $\pgon$. Then a 
  snake never travels from the interior of $\polar$ to the exterior of $\polar$. 
\end{lemma}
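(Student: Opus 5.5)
Since the snake nest is compatible with $\pgon$, a snake can only meet $\partial\polar$ at corners of $\polar$. So the plan is purely local: show that at every corner $c$ where a snake crosses, it passes from the exterior of $\polar$ to the interior, as in \cref{fig:proofcompatiblesnakes2}. Fix such a corner $c$, where the supporting lines $\widehat H_\alpha$ and $\widehat H_\beta$ of two edges of $\polar$ meet. By \cref{cor:strongcolorchange}, the snake crosses at $c$ exactly when $\alpha$ and $\beta$ have different colors. The corner $c$ lies in some face $F$ of $\Gamma$; I first argue that $c$ is interior to $F$, after a small perturbation of the snake segments if necessary. By construction in Case~2 of \cref{lem:compatiblesnakes}, exactly one snake segment of $F$ passes through $c$.

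First I identify the edges. The two hyperplanes $\widehat H_\alpha,\widehat H_\beta$ pass through $F$ and meet at $c$. Since reflecting hyperplanes in the interior of a face all pass through the center (or ideal point) of $F$, the point $c$ should be the center of a finite face $F$. Then $\alpha,\beta$ are roots of the rank~$2$ parabolic subsystem of $F$, and $\widehat H_\alpha$, $\widehat H_\beta$ cross $\partial F$ at pairs of opposite edges (\cref{opposite-edges}). Near $c$, $\polar$ is the wedge cut out by $\langle x,\alpha\rangle\le 0$ and $\langle x,\beta\rangle\le 0$. It therefore contains the part of $F$ on the negative side of both hyperplanes, and that part contains the source $v_{\yy(F)}$, because $O\in\polar$ and the source is the vertex of $F$ closest to $O$. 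Consequently $\partial F\cap\polar$ is the arc of $\partial F$ from the edge $e_\alpha$ to the edge $e_\beta$ that passes through the source, and the sink lies outside $\polar$.

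Next I use the color pattern of \cref{fig:facecolors}. The color changes in $F$ sit at the source, at the sink, and at one vertex on each source-to-sink path. Because $\alpha$ and $\beta$ differ in color, Case~2 applies: the inside arc contains an odd number of color changes, namely the source together with either zero or two side changes. The segment crossing $c$ is the one joining the two sides. Snake segments have one endpoint at the source or sink and are oriented toward it, so there are two possibilities. If the crossing segment ends at the source, which is inside, it travels from outside to inside, as desired. If it ends at the sink, which is outside, it would travel from inside to outside. I must rule out this second configuration. In that configuration the inside arc contains both side color changes together with the source, and the segment from the inside side change runs to the sink. I would exclude it using the ordering of roots along the face (\cref{rem:face-lines}): the inside arc consists of edges whose roots lie between $\alpha$ and $\beta$ on the initial segments from the source, and separability of $B\cap P$ on this line should force the side changes onto a single side relative to $\alpha,\beta$. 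This parity/ordering argument is the main obstacle, and I would check it carefully against the two pictures in \cref{fig:snakesegments}. If it fails, I would instead use the freedom in the choice of segments in Case~2, selecting the configuration in which the crossing segment ends at the source.

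Finally I treat infinite faces. Here $c$ would be the ideal point, which lies on $\partial\DD$ and so cannot be a corner of $\polar$ in the interior. A corner of $\polar$ on $\partial\DD$ would be an ideal point of an infinite face with one color change; the crossing segment there is oriented toward the source, which is inside $\polar$, so it again travels from outside to inside. Combining these cases, no snake exits $\polar$.
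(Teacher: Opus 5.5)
\textbf{There is a genuine gap at the step you flag as ``the main obstacle,'' and that step is the entire content of the lemma.} Your reduction is sound up to that point. In a finite face $F$ whose center is the corner $c$, the inside arc of $\partial F$ (from $e_\alpha$ through the source to $e_\beta$) contains an odd number of color-change vertices. What remains is to exclude the case where it contains the source together with both side changes $v_1,v_2$. Neither of your remedies does this.

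The ordering sketch is not carried out, and as stated it describes the inside arc backwards. Write the face's roots as $\beta_1\prec\cdots\prec\beta_k$ with $\alpha=\beta_a$, $\beta=\beta_b$, $a<b$. Then the inside arc carries $\beta_1,\ldots,\beta_a$ and $\beta_b,\ldots,\beta_k$, which are exactly the roots \emph{not} strictly between $\widehat\alpha$ and $\widehat\beta$. Done correctly, separability puts the single color switch at an index $j$ with $a\le j<b$, and that keeps both side changes off the inside arc.

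The fallback cannot work. First, the lemma is asserted for every compatible nest. More decisively, suppose the source, $v_1$ and $v_2$ were all inside. Then in \emph{both} admissible non-crossing pairings of the four points of $C$, the segment through $c$ is the one ending at the sink. So no choice of segments can repair that configuration.

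The missing idea is the color consequence of \cref{opposite-edges}. You cite it only to locate the crossings. Since $e_\alpha'$ has the color of $e_\alpha$ and $e_\beta'$ has the color of $e_\beta$, the four edges $e_\alpha,e_\beta,e_\alpha',e_\beta'$ alternate in color; they occur in this cyclic order around $\partial F$ because the two lines cross at the center. Hence each of the four arcs of $\partial F$ between consecutive ones contains an odd number of color-change vertices. Since $F$ has exactly four such vertices, each arc contains exactly one. The inside arc contains the source, so $v_1$, $v_2$ and the sink all lie outside $\polar$. The source is then the only color change in its component of $F\setminus\partial\polar$, so the segment through $c$ must end at the source and is oriented inward. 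This is the paper's argument.

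\textbf{A smaller point on infinite faces.} You justify this case by saying the face has only one color change, but it may have a second one on one of its unbounded paths. The correct reason is that the unique snake segment of a bichromatic infinite face always has the source as an endpoint and is oriented toward it.
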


\begin{proof}
  See \cref{fig:proofcompatiblesnakes2}  for an illustration of this proof.  
  By definition of compatibility, a snake traveling from the interior of 
  $\polar$ to the exterior of $\polar$ would have to pass through a corner 
  of $\polar$ in some face $F$. A snake segment leaving $\polar$ must not 
  contain the source of $F$, since the snake segment containing the source 
  of $F$ is directed towards the source of $F$, which is inside $\polar$. 
  For there to be two snake segments in $F$, $F$ must be finite with color 
  changes at the non-source non-sink vertices $v_1,v_2$. Let $e_1,e_2$ be 
  the edges of $F$ that intersect $\partial \polar$.  The edges  $e_1$ and 
  $e_2$ must be distinct colors, since otherwise compatibility would imply 
  that neither snake segment in $F$ intersects $\partial \polar $. Let 
  $e_1',e_2'$ be the opposite edges of $F$. By \Cref{opposite-edges}, $e_1$ 
  and $e_1'$ (respectively, $e_2$ and $e_2'$) have the same color due to 
  both intersecting the same hyperplane $H_1$ (respectively $H_2$). It 
  follows that, without loss of generality, $v_1$ is between $e_1$ and 
  $e_2'$, and $v_2$ is between $e_2$ and $e_1'$. Convexity of $\polar$ then 
  implies that  $v_1$ and $v_2$ are not in $\polar$. This implies that the 
  source vertex is the only color change in its component of $F\setminus 
  \partial \polar$. Hence, the snake segment passing through the corner 
  must be directed toward the source. We conclude that the snake can never 
  travel from within $\polar$ to its exterior.
\end{proof}

\begin{lemma}[Return to the Source]\label{get-to-source-eventually}
  Choose a snake nest compatible with a root polygon $\pgon$. If a snake 
  crosses $\partial \polar$, traveling from outside $\polar$ to inside 
  $\polar$ within a face  $F$, then henceforth this snake stays inside 
  $\polar$ and terminates at the source vertex of $\Gamma$.
\end{lemma}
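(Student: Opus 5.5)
We combine the trapping lemma with finiteness inside $\polar$ and then rule out every possible endpoint other than $v_{\id}$. Suppose a snake $\cS$ enters $\polar$ at a corner $c$ of $\polar$ inside a face $F$. First, by \cref{how-snakes-can-cross-boundary}, $\cS$ never again travels from the interior of $\polar$ to its exterior. Compatibility forces every later meeting of $\cS$ with $\partial\polar$ to happen at a corner, and the argument in the proof of \cref{how-snakes-can-cross-boundary} shows that a segment through a corner points toward the source, which lies on the interior side. So from $c$ onward the snake stays in $\polar$; by the same reasoning it cannot merely touch $\partial\polar$ and go back out.

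Next, the snake cannot continue forever or close up into a cycle. By \cref{finite-regions}, only finitely many faces of $\Gamma$ meet $\mathrm{int}(\polar)$, so only finitely many snake segments lie in $\polar$. Hence the part of $\cS$ after $c$ is a finite union of segments and either ends at a point or returns to an earlier point. It cannot be a cycle: a cycle through $c$ would have to travel from inside $\polar$ back to outside in order to come around and re-enter at $c$, which contradicts \cref{how-snakes-can-cross-boundary}. So $\cS$ terminates at some $v\in\polar$, and by \cref{cor:endpoints} we have $v=v_{\id}$, or $v$ is an ideal point, or $v=v_w$ with $R=\Inv(w)$ or $B=\Inv(w)$.

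It remains to rule out the last two cases.
\begin{itemize}
\item \emph{Ideal points.} A snake segment is oriented toward the source of its face, so a segment ending at an ideal point is oriented away from it. An ideal point can therefore be a starting point of a snake but not a terminal one. (Also, ideal points lie in $\polar$ only at corners on $\partial\DD$, and the snake is moving inward.)
\item \emph{Vertices $v_w$ with $R=\Inv(w)$ or $B=\Inv(w)$.} By \cref{lem:orientation}(3), both segments at such a vertex point away from it, so $v_w$ is a starting point and never a terminal one.
\end{itemize}
The only remaining terminal endpoint is $v_{\id}$, where by \cref{lem:orientation}(2) both segments point inward. So $\cS$ terminates at $v_{\id}$, the source vertex of $\Gamma$, which lies in $\polar$ because $\polar$ contains the identity chamber.

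The step I expect to be the main obstacle is ruling out cycles and infinite continuation. This needs a careful check that the snake really leaves $\polar$ only by crossing at a corner in the outward direction, including the degenerate case where $\cS$ passes through a corner tangentially. I would handle this with the case analysis from the proof of \cref{lem:compatiblesnakes}. In the same-color case the segments do not cross $\partial\polar$ at all. In the opposite-color case exactly one segment crosses, and it is directed toward the source, so it always goes inward.
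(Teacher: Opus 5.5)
Your proposal is correct and follows essentially the same route as the paper: the trapping lemma (\cref{how-snakes-can-cross-boundary}), finiteness of the faces meeting $\mathrm{int}(\polar)$ (\cref{finite-regions}) to force termination, and then \cref{lem:orientation} together with \cref{cor:endpoints} to identify $v_{\id}$ as the only possible terminal endpoint. Your version is slightly more careful than the paper's. You explicitly rule out the snake being a cycle, and you explain why ideal points and vertices $v_w$ with $R=\Inv(w)$ or $B=\Inv(w)$ can only be initial endpoints, never terminal ones; the paper leaves both steps implicit.
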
 

\begin{proof}
  We proved in \cref{how-snakes-can-cross-boundary} that a snake can 
  never proceed across $\partial \polar$ from a point inside $\polar$ 
  to a point outside $\polar$, so all that remains is to show that a 
  snake that crosses $\partial\polar$ eventually reaches the source vertex 
  of $\Gamma$. By \cref{finite-regions}, there are finitely many chambers 
  contained in $\polar$, and hence by graph theoretic duality there are 
  also only finitely many faces in $\DD \setminus \Gamma $ that intersect 
  the interior of $\polar$.  Thus, after entering $\polar$ our snake may 
  only have finitely many segments within $\polar$.  It follows that the 
  snake must eventually terminate its travel due to only having finitely 
  many possible segments. By \cref{lem:orientation} and \cref{cor:endpoints}, 
  the only place the snake may terminate is at the source of $\Gamma$. 
\end{proof}

We are now ready to prove the Polygon Lemma.

\begin{proof}[Proof of \cref{lem:polygon-lemma}]
  Let $\pgon$ be a root polygon. We show that there are at most two color 
  changes in $\partial \pgon$. First we observe the following consequence 
  of the parabolic biclosed condition: if $\widehat \alpha$, $\widehat \beta$ 
  are vertices of $\pgon$ that are the endpoints of an edge $e$ of $\pgon$, 
  then there is a color change in $e$ if and only if $\widehat \alpha$ and 
  $\widehat\beta$ are different colors.  

  The dual of the line $L$ through $\widehat \alpha$ and $\widehat\beta$ is 
  the point $x$ where $L_\alpha$ and $L_\beta$ meet. When $L$ is disjoint 
  from $\DD$, the point $x$ is at the hyperbolic center of a finite face $F$ 
  inside $\DD$.  When $L$ is tangent to $\partial \DD$, the point $x$ is the 
  point of tangency and the ideal point of an infinite-face $F$ 
  (\Cref{lem:cusps}).
  Moreover, this sets up a bijection between color changes in $\partial \pgon$ 
  and color changes in the vertices of $\pgon$ (read in cyclic order). Hence, 
  to prove there are at most two color changes in $\partial \pgon$, it is 
  enough to show that there are at most two color changes among the vertices 
  of $\pgon$ read in cyclic order. Equivalently, it is enough to show that 
  the edges of the polar dual $\polar$ change colors at most two times when 
  read cyclically. By \Cref{cor:strongcolorchange}, this occurs if and only 
  if (fixing a compatible choice of snake nest for $\pgon$) the snake nest 
  crosses $\partial \polar$ at most twice. 

  By \Cref{get-to-source-eventually}, any snake that crosses 
  $\partial\polar$ eventually reaches the source of $\Gamma$. 
  However, by \cref{lem:orientation}, only two snake segments terminate at 
  the source.  This proves the snake nest crosses $\partial\polar$ at most 
  twice, as desired.
\end{proof}

\section{Coloring the Boundary}\label{sec:boundarycolor}
The purpose of this section is to pass from the Polygon Lemma 
(\cref{lem:polygon-lemma}), which concerns the vertices of a root polygon, 
to a statement about the boundary $\partial\mathbb D$. Specifically, this 
section identifies the separating line we will use later to prove biclosed 
sets are weakly separable. As before, we fix complementary parabolic-biclosed 
sets $B$ and $R$ with $B\sqcup R=\Phi^+$. We color a rescaled root 
$\widehat\beta$ blue (resp.\ red) if $\beta\in B$ (resp.\ $\beta\in R$). 
We now color a point $p\in\partial\mathbb D$ blue (resp.\ red) if it is an
accumulation point of blue (resp.\ red) rescaled roots. By \cref{cor:red-blue}
every point in $\partial\DD$ is either blue or red. We say a point in 
$\partial\DD$ is \dfn{purple} if it is both blue and red.  

Fix once and for all a clockwise orientation of the boundary circle 
$\partial\mathbb D$. Our goal is to show that the boundary cannot contain an 
alternating pattern of four points colored red, blue, red, blue in cyclic order.  

\begin{theorem}\label{boundary-coloring-main} 
  There do not exist four points $x_1,x_2,x_3,x_4\in\partial\mathbb D$ 
  that occur in this clockwise cyclic order and have colors 
  red, blue, red, blue, respectively. Moreover, there are at most two 
  purple points in $\partial\DD$. Each connected component of the
  complement of the purple points in $\partial \DD$ is monochromatic.     
\end{theorem}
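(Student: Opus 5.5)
The plan is to derive everything from the Polygon Lemma (\cref{lem:polygon-lemma}) by approximating $\partial\DD$ with root polygons. Suppose, for contradiction, that $x_1,x_2,x_3,x_4$ occur in clockwise order on $\partial\DD$ with colors red, blue, red, blue. For each $i$, I choose small pairwise disjoint open neighbourhoods $U_i$ of $x_i$ in $\AA$. Since $x_i$ is an accumulation point of rescaled roots of its color, each $U_i$ contains a rescaled root $\gamma_i$ of the required color, and it can be taken arbitrarily close to $x_i$.

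The key step is to build a root polygon $\pgon$ that has each $\gamma_i$ on its boundary and whose boundary passes through the four regions $U_i$ in the same cyclic order. First I choose a root polygon $\pgon_0$ containing $\DD$; for instance $\conv(\widehat\alpha_1,\widehat\alpha_2,\widehat\alpha_3)$ works. Then I set $\pgon=\conv(X)$, where $X$ consists of the $\gamma_i$ together with many rescaled roots close to $\partial\DD$. These extra roots are chosen so that:
\begin{itemize}
\item $\DD\subseteq\pgon$;
\item each $\gamma_i$ is a vertex of $\pgon$;
\item the vertices of $\pgon$ all lie within a thin annular neighbourhood of $\partial\DD$.
\end{itemize}
To guarantee that $\gamma_i$ is extremal, I choose the $\gamma_i$ after the other vertices. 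Concretely, I take a fine polygon $Q$ whose vertices are rescaled roots near $\partial\DD$ and which contains $\DD$; such a $Q$ exists because roots accumulate densely on $\partial\DD$ (\cref{accumulationpoints}). The remaining issue is whether a root of the right color can be found outside $Q$ near $x_i$. Since $x_i$ lies in the interior of $Q$, I instead shrink: I first pick $\gamma_i$, then pick the other vertices of $Q$ in the region cut off by the tangent-ish line near $\gamma_i$. Roots accumulate on all of $\partial\DD$, so I can choose a convex polygon through the $\gamma_i$ and near-boundary roots enclosing $\DD$. The step where care is needed is ensuring that $\DD\subseteq\pgon$ while the $\gamma_i$ stay extremal. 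If $\gamma_i$ is very close to $x_i$ and the other vertices are farther out near $\partial\DD$ but away from $x_i$, then convexity makes the $\gamma_i$ vertices.

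Once $\pgon$ is built, its boundary, read clockwise, visits the red $\gamma_1$, the blue $\gamma_2$, the red $\gamma_3$ and the blue $\gamma_4$. This forces at least four color changes, contradicting the Polygon Lemma.

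For the purple points, suppose there were three purple points $y_1,y_2,y_3$. Each is both red and blue, so choosing colors red at $y_1$, blue at $y_2$ and red at $y_3$ already gives three of the pattern. I would then need a fourth blue point in the arc from $y_3$ back to $y_1$. I handle this by perturbing: a purple point is a limit of both colors, so near $y_3$ there are roots of both colors. I select two points near $y_3$ that approximate it from either side, and this yields a red–blue alternation of length four. Alternatively, I run the same polygon argument with six roots near three purple points, one red and one blue near each, arranged alternately. This gives six color changes. The cleanest version is therefore to put a red and a blue root near each purple point in the order red, blue, red, blue, red, blue, contradicting the Polygon Lemma again.

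Finally, for the last assertion, take any component $A$ of $\partial\DD$ minus the purple points. If $A$ contained both a red point and a blue point, then $A$ would be partitioned into the red points and the blue points, none of them purple. Both sets are closed in $A$, since the set of accumulation points of a set is closed. Connectedness of $A$ then gives a contradiction. The main obstacle I expect is the polygon construction with prescribed extremal vertices near prescribed boundary points. This might need the fact that roots accumulate at $x_i$ from outside arbitrarily close along $\partial\DD$ in every direction.
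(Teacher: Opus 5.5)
Your reduction of the alternating-quadruple claim to the Polygon Lemma is the paper's strategy. However, there is a genuine gap: the step you flag as the main obstacle is where all the work lies, and your sketch does not supply it. That step is producing a root polygon $\pgon\supseteq\DD$ that has $\widehat\gamma_1,\dots,\widehat\gamma_4$ as vertices in the right cyclic order.

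First, density of limit roots (\cref{accumulationpoints}) does not give a root polygon with vertices near prescribed boundary points. For finite $X\subseteq\resroots$ one has $\DD\subseteq\conv(X)$ if and only if every $y\in\partial\DD$ lies in the closed tangency arc $[p_\beta,q_\beta]_{\cw}$ of some $\widehat\beta\in X$. Equivalently, some point of $X$ lies on or beyond the tangent line $L_y$. Roots approaching $y$ from the region between $L_y$ and $\partial\DD$ are useless for covering $y$. What you need is \cref{prop:finite_visibility}:
\begin{itemize}
\item If $L_y$ contains at most one rescaled root, then $\resroots\cap L_y^+$ is infinite and so accumulates at $y$. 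This gives roots arbitrarily close to $y$ whose \emph{open} arc contains $y$.
\item If $L_y$ contains two rescaled roots (possible once one of $p,q,r$ is $\infty$, by \cref{lem:cusps}), only finitely many roots lie strictly beyond $L_y$, all at positive distance from $y$. So the only roots close to $y$ whose arcs contain $y$ are those of the parabolic subsystem on $L_y$, and each has $y$ as an endpoint of its arc. Covering a neighbourhood of $y$ with such roots forces two of them on opposite sides of $y$; this is the double party hat of \cref{lem:Fy}.
\end{itemize}
So the fact you allude to at the end is needed at every point of $\partial\DD$, not just at the $x_i$, and it fails at cusps.

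Second, your extremality heuristic (other vertices far from $x_i$) conflicts with covering. The points of $\partial\DD$ just outside the small arc of $\gamma_i$ must be covered by other vertices, which then lie near $\widehat\gamma_i$ and can swallow it. The criterion that works (\cref{lem:party-hat-cover-convex-hull}) is a private point. If some point of $(p_{\gamma_i},q_{\gamma_i})_{\cw}$ lies in no other vertex's closed arc, the tangent line there separates $\widehat\gamma_i$ from the remaining vertices. The cyclic order of these private points is then the order of the $\widehat\gamma_i$ on $\partial\pgon$. The construction is:
\begin{itemize}
\item choose each $\gamma_i$ so close to $x_i$ that the four closed arcs lie in disjoint neighbourhoods;
\item fix private points in their open arcs;
\item cover every other point of $\partial\DD$ by the open arc of a (double) party hat avoiding the private points;
\item take a finite subcover by compactness (\cref{lem:extend-prescribed-party-hats}).
\end{itemize}
Openness of the arcs is exactly why cusps need double party hats.

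For the purple points, your perturbation argument does not work, because points of $\partial\DD$ near $y_3$ have uncontrolled colours. In the six-root version you cannot choose the order within each red/blue pair. Parity still forces at least four changes, but you would also need disjoint arcs for two roots near the same point. The paper avoids both issues: it picks any $z_4\in(z_3,z_1)_{\cw}$ and gives the purple points whichever colours complete an alternating quadruple.

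Your closedness-and-connectedness argument for the last assertion is correct and more explicit than the paper's.
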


Our strategy for proving \cref{boundary-coloring-main} is to assume 
$\partial\DD$ does contain an alternating pattern of four points colored 
red, blue, red, blue in cyclic order and then replace that pattern by a 
finite root polygon whose vertices include nearby rescaled roots of the 
same four colors.  The Polygon Lemma will then give a contradiction.  We 
next introduce a geometric device that lets us build such root polygons. 
For distinct points $p,q\in\partial\mathbb D$, write $[p,q]_{\mathrm{cw}}$ 
for the closed arc obtained by moving clockwise from $p$ to $q$, and write 
$(p,q)_{\mathrm{cw}}$ for its relative interior. 

\begin{definition}[Party Hat]\label{def:party-hats}
  Let $\beta\in\Phi^+$.  Since the rescaled root $\widehat\beta$ lies
  outside $\mathbb D$, there are exactly two tangent lines from
  $\widehat\beta$ to $\partial\mathbb D$.  Let $p_\beta$ and $q_\beta$ 
  denote the two points of tangency; we name these points so that $\widehat\beta$ 
  lies on the same side of $\widehat H_\beta$ as $(p_\beta,q_\beta)_{\mathrm{cw}}$.
  The \dfn{party hat} of $\beta$, denoted $\mathrm{PH}(\beta)$, is the
  closed region in $\mathbb A\setminus \mathbb D^\circ$ bounded by the two 
  tangent segments $[\widehat\beta,p_\beta]$ and $[\widehat\beta,q_\beta]$ and 
  by the closed boundary arc $[p_\beta,q_\beta]_{\mathrm{cw}}$. If we write
  $\Delta=\PH(\beta)$, then we denote the closed arc $[p_\beta,q_\beta]_{\mathrm{cw}}$ 
  and the open arc $(p_\beta,q_\beta)_{\mathrm{cw}}$ by $\Delta_{\arcc}$ and 
  $\Delta_{\arcc}^\circ$, respectively. A \dfn{party-hat cover} is a collection
  of positive roots whose party hats cover $\partial \DD $. 
\end{definition}

\cref{def:party-hats} is illustrated in \cref{fig:party_hat1}. 

\begin{figure}
  \centering
  \includegraphics[height=7.618cm]{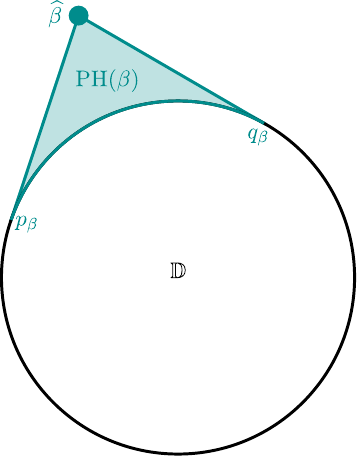}
  \caption{A party hat. } \label{fig:party_hat1}
\end{figure}

\begin{figure}
  \centering
  \includegraphics[height=8.785cm]{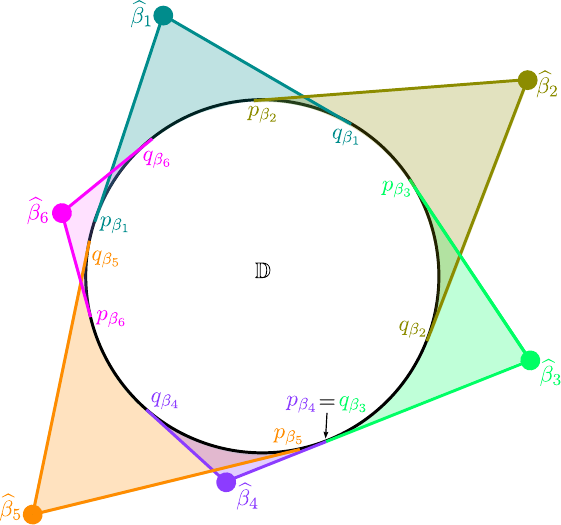}
  \caption{The party hats of the roots in a party-hat cover.  Since 
  $\widehat\beta_3$ and $\widehat\beta_4$ lie on a line tangent to 
  $\partial\DD$, the union $\PH(\beta_3,\beta_4)=\PH(\beta_3)\cup\PH(\beta_4)$ 
  is a double party hat. } \label{fig:party_hat_cover}
\end{figure}

\begin{remark}[Dual perspective]
  For a finite collection of positive roots, the corresponding open arcs 
  cover $\partial\DD$ if and only if the intersection of the closed half-spaces 
  containing the fundamental chamber is a polygon contained in $\DD^\circ$, 
  hence bounded in the hyperbolic metric. The closed arcs cover $\partial\DD$ 
  if and only if this intersection is a finite-area hyperbolic polygon, possibly 
  with ideal vertices.
\end{remark}

Given a point $p\in\partial\DD$, we would like to replace $p$ with a rescaled 
root $\widehat\beta$ such that $p$ lies in the open arc $(p_\beta,q_\beta)_{\cw}$; 
the openness is crucial here because we will want to apply a compactness argument. 
There is one potential problem: such a root $\beta$ might not exist. Indeed, this 
can be the case if $p$ is the accumulation point of the rescaled positive roots 
in a rank 2 parabolic subsystem of $\Phi$. The following definition will allow us 
to overcome this hurdle. 

\begin{definition}[Double Party Hat] \label{def:doublepartyhat}
  Suppose $\beta_1$ and $\beta_2$ are two roots such that the line through 
  $\widehat \beta_1$ and $\widehat\beta_2$ is tangent to $\partial\DD$. Let
  $\PH(\beta_1,\beta_2)=\PH(\beta_1)\cup\PH(\beta_2)$; we call $\PH(\beta_1,\beta_2)$ 
  a \dfn{double party hat}. If we write $\Delta=\PH(\beta_1,\beta_2)$, then 
  we denote the closed arc $[p_{\beta_1},q_{\beta_1}]_{\cw} 
  \cup[p_{\beta_2},q_{\beta_2}]_{\cw}$ by $\Delta_{\arcc}$, and we let 
  $\Delta_{\arcc}^\circ$ denote the relative interior of $\Delta_{\arcc}$. 
\end{definition}

The following simple observation about party hats is fundamental to several of 
our arguments. 

\begin{observation}\label{hat-containment}
  For $y\in\partial\DD$ and $\beta\in\Phi^+$, we have $y\in\PH(\beta)$ if 
  and only if $\widehat\beta$ is in the closed half-space bounded by the tangent
  line $L_y$ that is disjoint from $\DD^\circ$.  
\end{observation}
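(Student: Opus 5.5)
This is essentially a matter of unwinding the definitions in the affine patch $\AA$ with the standard Euclidean structure, where $\partial\DD$ is the unit circle centered at $O$. I would work with the affine-linear function $f_y(z)=\langle y,z\rangle$ on $\AA$ for fixed $y\in\partial\DD$. Its zero set is the tangent line $L_y$. Since $f_y(y)=Q(y)=0$ and $f_y$ is negative on $\DD^\circ$ (for $z\in\DD^\circ$ and $y$ isotropic, both lie in the closure of the same nappe of the light cone, as $\langle z,O\rangle=\langle y,O\rangle=-1<0$, so $\langle y,z\rangle<0$), the closed half-space disjoint from $\DD^\circ$ is exactly $\{f_y\ge 0\}=\overline{L_y^+}$. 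So the claim becomes: $y\in\PH(\beta)$ iff $\langle y,\widehat\beta\rangle\ge 0$.

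Next I would identify the arc $[p_\beta,q_\beta]_{\cw}$. The points of tangency $p_\beta,q_\beta$ are exactly $L_\beta\cap\partial\DD$ (point-line duality, \cref{def:pt-line}), and $\widehat H_\beta\cap\AA=L_\beta$. Thus $\partial\DD\setminus\{p_\beta,q_\beta\}$ splits into two open arcs, one in $L_\beta^+$ and one in $L_\beta^-$. For $y\in\partial\DD$, the symmetry of the form gives $\langle y,\widehat\beta\rangle=\langle\widehat\beta,y\rangle$, so $y\in\overline{L_y^+}\ni\widehat\beta$ iff $y\in\overline{L_\beta^+}$. Now $\widehat\beta\in L_\beta^+$ since $Q(\widehat\beta)>0$. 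By the naming convention in \cref{def:party-hats}, $(p_\beta,q_\beta)_{\cw}$ lies on the same side of $\widehat H_\beta$ as $\widehat\beta$, i.e.\ in $L_\beta^+$. Hence $\{y\in\partial\DD:\langle y,\widehat\beta\rangle\ge0\}=[p_\beta,q_\beta]_{\cw}$.

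Finally I would check that $\PH(\beta)\cap\partial\DD=[p_\beta,q_\beta]_{\cw}$. The region bounded by the two tangent segments and the closed arc lies in $\AA\setminus\DD^\circ$. Its intersection with the circle is that arc together with the tangency points, since the open tangent segments miss $\DD$. No other point of the circle is enclosed: the complementary arc lies on the other side of the chord $L_\beta$, while the party hat lies in $\overline{L_\beta^+}$, being the convex-hull-like region cut off by the tangents, which lie on the $\widehat\beta$ side of the chord.

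Combining the three steps gives the equivalence. The only mildly delicate point is the sign claim $f_y<0$ on $\DD^\circ$, equivalently that $\widehat\beta$ and the correct arc share a side; I would settle it by the reverse Cauchy–Schwarz inequality for timelike and null vectors in the same nappe. Everything else is routine plane geometry.
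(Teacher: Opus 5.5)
Your proposal is correct. The paper states this as an observation without proof, and your verification is exactly the definitional unwinding the authors leave implicit: you identify the far closed half-plane of $L_y$ as $\{z:\langle y,z\rangle\ge 0\}$, use symmetry of the form to turn $\langle y,\widehat\beta\rangle\ge 0$ into $y\in\overline{L_\beta^+}\cap\partial\DD=[p_\beta,q_\beta]_{\cw}$, and check that $\PH(\beta)\cap\partial\DD=[p_\beta,q_\beta]_{\cw}$. Two small points: the phrase ``$y\in\overline{L_y^+}\ni\widehat\beta$'' should read ``$\widehat\beta\in\overline{L_y^+}$''; and the sign claim on $\DD^\circ$ follows more simply from $\langle y,O\rangle=-1$ together with $L_y\cap\DD^\circ=\emptyset$, so reverse Cauchy--Schwarz is not needed.
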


\begin{lemma}\label{lem:party-hat-cover-convex-hull}
  Let $\Lambda\subseteq\Phi^+$ be finite, and set
  $\pgon=\operatorname{Conv}\widehat\Lambda$. Then $\Lambda$ is a party-hat 
  cover if and only if $\mathbb D\subseteq \pgon$. Suppose in addition that 
  $\Omega\subseteq\Lambda$ and that for every $\beta\in\Omega$, there is 
  a point $x_\beta\in[p_\beta,q_\beta]_{\cw}$ that belongs to no other
  arc $[p_\gamma,q_\gamma]_{\cw}$ with $\gamma\in\Lambda\setminus\{\beta\}$. 
  Then $\widehat\Omega$ is contained in the set of vertices of $\pgon$. The 
  cyclic order of these vertices agrees with the cyclic order of the points 
  $x_\beta$. 
\end{lemma}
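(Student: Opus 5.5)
The whole argument will run through \cref{hat-containment}. For $y\in\partial\DD$, let $\overline{L_y^{+}}$ be the closed half-plane bounded by the tangent line $L_y$ and disjoint from $\DD^\circ$. Then $y\in\PH(\beta)$ holds exactly when $\widehat\beta\in\overline{L_y^{+}}$.

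\textbf{First statement.} Suppose $\DD\subseteq\pgon$ and take $y\in\partial\DD$. Since $y\in\pgon=\conv\widehat\Lambda$, the affine functional that is $\ge 0$ on $\overline{L_y^+}$ and vanishes on $L_y$ takes the value $0$ at $y$. It is negative on $\DD^\circ$, and its maximum over $\pgon$ is attained at a vertex. That maximum is at least $0$, so some $\widehat\beta$ with $\beta\in\Lambda$ lies in $\overline{L_y^+}$, and hence $y\in\PH(\beta)$. (Note that $y\in\PH(\beta)\cap\partial\DD$ means exactly that $y$ lies in the arc $[p_\beta,q_\beta]_{\cw}$.) Conversely, suppose $\DD\not\subseteq\pgon$. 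Since $\DD$ is convex with smooth boundary, some $y\in\partial\DD$ lies outside $\pgon$. Separating $y$ from the compact convex set $\pgon$, I expect to be able to choose the point so that $\pgon$ lies strictly on the $\DD$-side of a tangent line. Concretely, pick the point $y'\in\partial\DD$ of $\DD$ that is farthest in the direction normal to a separating line; its tangent line $L_{y'}$ is parallel to that line and has $\pgon$ strictly on the interior side. Then no $\widehat\beta$ lies in $\overline{L_{y'}^+}$, so $y'$ is uncovered.

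\textbf{Second statement.} Fix $\beta\in\Omega$ with its private point $x_\beta$. Then $\widehat\beta\in\overline{L_{x_\beta}^+}$, while every other $\widehat\gamma$ with $\gamma\in\Lambda$ lies strictly on the $\DD$-side of $L_{x_\beta}$. So $L_{x_\beta}$ weakly separates $\widehat\beta$ from the rest of $\widehat\Lambda$, with all other points strictly on one side. The points of $\pgon$ in the closed half-plane $\overline{L_{x_\beta}^+}$ are then convex combinations in which only $\widehat\beta$ can carry weight. Hence $\pgon\cap\overline{L_{x_\beta}^+}\subseteq\{\widehat\beta\}$, and $\widehat\beta$ is an exposed point of $\pgon$, therefore a vertex.

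\textbf{Cyclic order.} This is the step needing most care. The functional $\ell_y$ attached to $L_y$, viewed through its outward normal direction $n(y)$, rotates monotonically as $y$ moves clockwise around $\partial\DD$. For each $\beta\in\Omega$, the vertex $\widehat\beta$ is the unique maximizer over $\pgon$ of the linear functional in direction $n(x_\beta)$. This holds because every point of $\pgon$ other than $\widehat\beta$ has value below the level of $L_{x_\beta}$, while $\widehat\beta$ has value at or above it. For a convex polygon, the vertex maximizing a linear functional in direction $n$ moves monotonically around $\partial\pgon$ as $n$ rotates: the normal fan of $\pgon$ consists of cones arranged cyclically in the same order as the vertices. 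Since $n(y)$ is the outward normal of the circle at $y$ and sweeps the full circle of directions monotonically, the map $x_\beta\mapsto\widehat\beta$ preserves cyclic order, with clockwise going to clockwise. Distinct $\beta$ give distinct $x_\beta$ by privacy, so these points lie in distinct, cyclically ordered normal cones.

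The main obstacle I expect is the converse half of the first statement when $\partial\pgon$ meets $\partial\DD$. Specifically, I must justify that the tangency point chosen from the separating direction really lies outside $\pgon$ with $\pgon$ strictly inside. I will handle this by choosing the separating line strictly apart from $\pgon$ and then moving it parallel until it is tangent to $\DD$.
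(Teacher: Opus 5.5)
Your route is the paper's. The first equivalence goes through \cref{hat-containment} and tangent lines, and the vertex claim uses the tangent line at the private point $x_\beta$. Your separation argument for the converse of the first equivalence is correct, and your normal-fan argument supplies the cyclic-order step that the paper calls straightforward. However, one step in the second part is false as written.

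You assert that points of $\pgon$ in $\overline{L_{x_\beta}^+}$ are convex combinations putting weight only on $\widehat\beta$, so that $\pgon\cap\overline{L_{x_\beta}^+}\subseteq\{\widehat\beta\}$. This holds only when $\widehat\beta$ lies on $L_{x_\beta}$, i.e.\ when $x_\beta\in\{p_\beta,q_\beta\}$. When $x_\beta\in(p_\beta,q_\beta)_{\cw}$, the point $\widehat\beta$ lies strictly beyond $L_{x_\beta}$. This is exactly the case produced by \cref{lem:extend-prescribed-party-hats} and used in \cref{lem:prescribed-party-hats-give-polygon}. In that case, for any $\gamma\in\Lambda\setminus\{\beta\}$ and small $t>0$, the point $(1-t)\widehat\beta+t\widehat\gamma$ lies in $\pgon\cap\overline{L_{x_\beta}^+}$ but carries weight on $\widehat\gamma$. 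For the same reason, $L_{x_\beta}$ does not pass through $\widehat\beta$, so it cannot exhibit $\widehat\beta$ as an exposed point. The same error recurs in your cyclic-order paragraph, where you say every point of $\pgon$ other than $\widehat\beta$ has value below the level of $L_{x_\beta}$.

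The fix uses only what you already have. Let $\ell$ be the affine functional that vanishes on $L_{x_\beta}$ and is nonnegative on $\overline{L_{x_\beta}^+}$. Privacy gives $\ell(\widehat\gamma)<0\le\ell(\widehat\beta)$ for all $\gamma\in\Lambda\setminus\{\beta\}$. This strict inequality is a statement about the finite set $\widehat\Lambda$, not about $\pgon$. A convex combination of points of $\widehat\Lambda$ attains $\max_{\widehat\Lambda}\ell$ only if all its weight sits on maximizers. Hence $\widehat\beta$ is the unique maximizer of $\ell$ over $\pgon$, and it is exposed by the parallel line $\{\ell=\ell(\widehat\beta)\}$, so it is a vertex. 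Moreover, $n(x_\beta)$ then lies in the interior of the normal cone of $\widehat\beta$, which is exactly what your normal-fan argument needs. With this correction your proof is complete.
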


\begin{proof}
  The polygon $\pgon$ contains $\mathbb D$ if and only if for every
  tangent line $L$ to $\partial\mathbb D$, the set $\pgon$ intersects the closed
  half-plane bounded by $L$ that is disjoint from $\mathbb D^\circ$.
  This occurs if and only if each point in $\partial\DD$ belongs to one of the arcs
  $[p_\beta,q_\beta]_{\cw}$ with $\beta\in \Lambda$.

  Now choose $\beta\in\Omega$, and let $T_{x_\beta}$ be the line tangent to 
  $\partial\DD$ at $x_{\beta}$. By hypothesis, $x_\beta$ does not belong to any 
  of the party hats of the roots in $\Lambda\setminus\{\beta\}$. This implies 
  that the rescaled roots in $\widehat\Lambda\setminus\{\widehat\beta\}$ all lie 
  on the opposite side of $T_{x_\beta}$ from $\widehat\beta$. Hence, $\widehat\beta$ 
  is a vertex of $\pgon$. The assertion about the cyclic order is straightforward.
\end{proof}

\begin{lemma}\label{lem:Fy}
  Suppose $y\in\partial\DD$ and $U$ is an open subset of $\mathbb A$ containing $y$. 
  Then there exists a party hat or double party hat $\Delta$ contained in $U$ such 
  that $y$ belongs to the open arc $\Delta_{\arcc}^\circ$.  
\end{lemma}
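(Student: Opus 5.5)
Fix $y\in\partial\DD$ and an open set $U\ni y$. The idea is that a party hat $\PH(\beta)$ is small exactly when $\widehat\beta$ is close to $\partial\DD$: its arc $[p_\beta,q_\beta]_{\cw}$ is the set of tangency points of lines through $\widehat\beta$, and $\PH(\beta)$ lies in a neighborhood of the segment from $\widehat\beta$ to its nearest boundary point whose size tends to $0$ as $\widehat\beta\to\partial\DD$. So I will find rescaled roots near $y$, and split into two cases according to \Cref{prop:finite_visibility}: whether or not $L_y^+\cap\resroots$ is finite.

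\textbf{Case 1: $L_y^+\cap\resroots$ is infinite.} Here $L_y^+$ is the open half-plane beyond the tangent line, disjoint from $\DD$. Since $\widehat\Phi$ is discrete in $\AA\setminus\DD$ and lies in the compact triangle $\conv(\widehat\alpha_1,\widehat\alpha_2,\widehat\alpha_3)$, by \cref{accumulationpoints} the infinitely many roots in $L_y^+$ accumulate only on $\partial\DD\cap\overline{L_y^+}=\{y\}$. So there is a sequence $\widehat\beta_n\to y$ with $\widehat\beta_n\in L_y^+$ strictly.
\begin{itemize}
\item Strict positivity $\langle y,\widehat\beta_n\rangle>0$ means $y$ lies strictly between the two tangency points, i.e.\ $y\in(p_{\beta_n},q_{\beta_n})_{\cw}$. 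This uses \cref{hat-containment} together with its strict version, which comes from comparing the sign of $\langle\cdot,\widehat\beta_n\rangle$ along the arc.
\item Since $\widehat\beta_n\to y$, the tangency points $p_{\beta_n},q_{\beta_n}$ (the intersection of $L_{\beta_n}$ with $\partial\DD$) tend to $y$, by continuity of the polar map at $y$.
\item Hence the whole party hat, contained in the triangle with vertices $\widehat\beta_n,p_{\beta_n},q_{\beta_n}$ union a small cap of the disk near the arc, shrinks to $y$. For large $n$ it lies in $U$.
\end{itemize}

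\textbf{Case 2: $L_y^+\cap\resroots$ is finite.} By \cref{prop:finite_visibility}, $L_y$ contains the rescaled roots of a rank~$2$ parabolic subsystem with ideal point $y$. By \cref{rem:face-lines}, these roots form two sequences $\widehat\beta_n$ and $\widehat\gamma_n$ on $L_y$ converging to $y$ from opposite sides. For $\beta$ on $L_y$, one tangency point of $\widehat\beta$ is $y$ itself, so $y$ is an endpoint of the arc of $\PH(\beta_n)$ and of $\PH(\gamma_n)$. Because the roots lie on opposite sides of $y$, the two arcs lie on opposite sides of $y$. I would verify this orientation carefully using the naming convention for $p_\beta,q_\beta$. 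Their union is then an arc containing $y$ in its relative interior. Taking $n$ large, both hats shrink to $y$ as in Case 1, so the double party hat $\PH(\beta_n,\gamma_n)$ lies in $U$, with $y\in\Delta_{\arcc}^\circ$.

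\textbf{Main obstacle.} The delicate point is the quantitative shrinking claim: that $\PH(\beta)\subseteq U$ once $\widehat\beta$ is close to $y$. This is a Euclidean computation in the standard affine patch, with $\DD$ the unit disk. If $\widehat\beta$ lies at distance $\rho>1$ from $O$, the tangency points subtend half-angle $\arccos(1/\rho)\to0$ around the direction of $\widehat\beta$. So the arc and the tangent segments lie within $O(\sqrt{\rho-1})+|\widehat\beta-y|$ of $y$. A secondary check is orientation bookkeeping in Case 2, to confirm that the two hats really sit on opposite sides of $y$ rather than overlapping on one side.
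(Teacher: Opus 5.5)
Your proposal is correct and takes essentially the same route as the paper. Both split according to whether the tangent line $L_y$ carries a rank~$2$ parabolic subsystem (your finiteness criterion for $L_y^+\cap\resroots$ is equivalent, via \cref{prop:finite_visibility}, to the paper's ``at least two roots on $L_y$''), then use a double party hat from roots on $L_y$ on either side of $y$ in that case and a single small party hat of a root in $L_y^+$ near $y$ otherwise; you additionally spell out the shrinking estimate and why $y$ lies in the open arc, which the paper leaves implicit.
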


\begin{proof}
  Let $L=L_y$ be the line tangent to $\partial\DD$ at $y$.
  If $\resroots'=\resroots\cap L$ has at least two roots, 
  then $\Phi'$ is a parabolic rank $2$ subsystem by \Cref{lem:cusps}.
  In this case, we may choose rescaled roots 
  $\widehat{\gamma}_1,\widehat{\gamma}_2$ in $L$ on either 
  side of $y$ such that 
  $[p_{\gamma_1},q_{\gamma_1}]_{\cw}\cup 
  [p_{\gamma_2},q_{\gamma_2}]_{\cw}\subseteq U$ and 
  $q_{\gamma_1}=y=p_{\gamma_2}$.
  Moreover, we can assume $\PH(\gamma_1)$ and $\PH(\gamma_2)$ 
  are contained in $U$. Then $y$ is contained in the open arc 
  $\Delta_{\arcc}^{\circ}$ of the double party hat $\Delta = 
  \PH(\gamma_1,\gamma_2)$.

  Now assume $L$ has at most one rescaled root. By \cref{prop:finite_visibility}, 
  the open half plane $L^+$ on the side of $L$ not containing $\DD$ includes an 
  infinite set of rescaled positive roots. These roots have an accumulation 
  point at $y$, so we may choose a rescaled root $\widehat{\gamma}\in L^+$ 
  sufficiently close to $y$ to ensure $\PH(\gamma)\subseteq U$. In this case, 
  we take $\Delta=\PH(\gamma)$.
\end{proof}

\begin{lemma}\label{lem:extend-prescribed-party-hats}
  Let $\Omega\subseteq\Phi^+$ be a finite set of positive roots, and assume 
  that the arcs $[p_\beta,q_\beta]_{\cw}$ for $\beta\in\Omega$ are pairwise 
  disjoint. Then there is a finite party-hat cover $\Lambda\subseteq\Phi^+$ 
  containing $\Omega$ such that for every $\beta\in\Omega$, there is a point
  $x_\beta\in(p_\beta,q_\beta)_{\cw}$ that belongs to no arc 
  $[p_\gamma,q_\gamma]_{\cw}$ with $\gamma\in\Lambda\setminus\{\beta\}$. 
\end{lemma}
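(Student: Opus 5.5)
The plan is a compactness argument on the circle, using \cref{lem:Fy} to produce small (double) party hats around every point not already reserved by $\Omega$.

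\textbf{Step 1: reserve points and a safety margin.} For each $\beta\in\Omega$, pick $x_\beta\in(p_\beta,q_\beta)_{\cw}$; the midpoint of the arc will do. The closed arcs $A_\beta=[p_\beta,q_\beta]_{\cw}$ are pairwise disjoint and compact, so the finite set $X=\{x_\beta:\beta\in\Omega\}$ has positive distance from $\bigcup_{\gamma\in\Omega\setminus\{\beta\}}A_\gamma$ for each $\beta$. Choose $\varepsilon>0$ smaller than every distance from a point $x_\beta$ to $A_\gamma$ with $\gamma\neq\beta$. Also take $\varepsilon$ smaller than the distance from $x_\beta$ to the endpoints $p_\beta,q_\beta$.

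\textbf{Step 2: cover the complement of $\bigcup_\beta A_\beta^\circ$.} Let $K=\partial\DD\setminus\bigcup_{\beta\in\Omega}(p_\beta,q_\beta)_{\cw}$, which is compact. For each $y\in K$, let $U_y$ be the open ball in $\AA$ of radius $\varepsilon/2$ around $y$. This ball avoids $X$, because $y$ lies at distance at least $\varepsilon$ from every $x_\beta$: the point $y$ lies outside $(p_\beta,q_\beta)_{\cw}$, and $x_\beta$ is at distance more than $\varepsilon$ from $p_\beta$ and $q_\beta$. Some care is needed here, since chord distance versus arc position must be set up so that this holds. The cleanest approach is to pick an open arc neighborhood $N_\beta\ni x_\beta$ with closure inside $(p_\beta,q_\beta)_{\cw}$, and require $U_y\cap N_\beta=\emptyset$ for all $y\in K$.

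By \cref{lem:Fy}, there is a party hat or double party hat $\Delta_y\subseteq U_y$ with $y\in(\Delta_y)^\circ_{\arcc}$. Since $\Delta_y\subseteq U_y$, its closed arc(s) miss every $x_\beta$. The open arcs $(\Delta_y)^\circ_{\arcc}$ cover the compact set $K$, so finitely many of them, say for $y_1,\dots,y_m$, suffice.

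\textbf{Step 3: define $\Lambda$ and check the conclusion.} Let $\Lambda$ be $\Omega$ together with the one or two roots defining each $\Delta_{y_i}$. Every point of $\partial\DD$ lies either in some $(p_\beta,q_\beta)_{\cw}$ or in $K$, hence in some arc of a root of $\Lambda$, so $\Lambda$ is a party-hat cover. For the isolation property, take $\gamma\in\Lambda\setminus\{\beta\}$.
\begin{itemize}
\item If $\gamma\in\Omega$, then $x_\beta\notin A_\gamma$ by disjointness.
\item Otherwise, the arc of $\gamma$ is contained in the arc of some $\Delta_{y_i}$, which lies in $U_{y_i}$ and so misses $x_\beta$.
\end{itemize}
There is one subtlety: $\gamma$ from some $\Delta_{y_i}$ might coincide with $\beta$ itself. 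That is harmless, since it does not enlarge $\Lambda\setminus\{\beta\}$.

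\textbf{Main obstacle.} The only delicate point is the metric bookkeeping in Step 2. We need the neighborhoods $U_y$, for $y\in K$, to avoid all the reserved points $x_\beta$ uniformly. This is ensured by choosing $\varepsilon$ less than the distance from the finite set $X$ to the compact set $K$, which is positive since $X\cap K=\emptyset$. With that choice the argument reduces to \cref{lem:Fy} plus compactness.
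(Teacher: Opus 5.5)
Your proof is correct and follows essentially the same route as the paper. You reserve points $x_\beta$, use \cref{lem:Fy} to place small (double) party hats that avoid the reserved points around the remaining boundary points, and extract a finite cover by compactness. The only difference is cosmetic: you apply compactness to $K=\partial\DD\setminus\bigcup_{\beta}(p_\beta,q_\beta)_{\cw}$ with a uniform radius below $\operatorname{dist}(X,K)$, whereas the paper covers all of $\partial\DD$ by the open $\Omega$-arcs together with hats around every $y\notin X$. Your Step 1 $\varepsilon$-bookkeeping can simply be dropped in favor of that uniform radius.
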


\begin{proof}
  For each $\beta\in \Omega$, choose a point $x_\beta\in (p_\beta,q_\beta)_{\cw}$. 
  Note that $x_\beta\not\in[p_{\beta'},q_{\beta'}]_{\cw}$ for all distinct 
  $\beta,\beta'\in \Omega$. Let $X=\{x_\beta:\beta\in\Omega\}$.

  For each $y\in\partial\mathbb D\setminus X$, choose an open boundary arc
  $U_y$ that contains $y$ and is disjoint from $X$. Choose an open neighborhood 
  $V_y$ of $y$ in $\AA$ such that $V_y\cap\partial\DD\subseteq U_y$. Applying 
  \cref{lem:Fy} to $V_y$ gives a party hat or double party hat $\Delta_y\subseteq
  V_y$ such that 
  \[
  y\in(\Delta_y)_{\arcc}^\circ\subseteq(\Delta_y)_{\arcc}\subseteq U_y.
  \]
  By construction, $(\Delta_y)_{\arcc}$ is disjoint from $X$. The collection 
  \[
  \mathcal C=\{(p_\beta,q_\beta)_{\cw}:\beta\in\Omega\}\cup 
  \{(\Delta_y)_{\arcc}^\circ:y\in\partial\DD\setminus X\}
  \] 
  is an open cover of $\partial\DD$. Every subcover of $\mathcal C$ must contain 
  $\{(p_\beta,q_\beta)_{\cw}:\beta\in\Omega\}$ because for each $\beta\in\Omega$, 
  the only set in $\mathcal C$ containing $x_\beta$ is $(p_\beta,q_\beta)_{\cw}$. 
  Since $\partial\DD$ is compact, $\mathcal C$ has a finite subcover. This finite 
  subcover has the form \[\{(p_\beta,q_\beta)_{\cw}:\beta\in\Omega\}\cup 
  \{(\Delta_{y_1})_{\arcc}^\circ,\ldots,(\Delta_{y_m})_{\arcc}^\circ\}\] for 
  some $y_1,\ldots,y_m\in\partial\DD\setminus X$. For $1\leq i\leq m$, note that 
  $\Delta_{y_i}$ is either a party hat $\PH(\beta)$ or a double party hat 
  $\PH(\beta_1,\beta_2)$; let 
  \[
  F_i=
  \begin{cases}
    \{\beta\} & \text{if } \Delta_{y_i}=\PH(\beta); \\
    \{\beta_1,\beta_2\} & \text{if } \Delta_{y_i}=\PH(\beta_1,\beta_2).
  \end{cases}
  \]
  The set
  \[
  \Lambda=\Omega \cup \bigcup_{i=1}^m F_{i}
  \]
  is our desired finite party-hat cover. 
\end{proof}

\begin{lemma}\label{lem:prescribed-party-hats-give-polygon}
  Let $\alpha_1,\ldots,\alpha_k\in\Phi^+$. Suppose that the closed arcs
  $[p_{\alpha_1},q_{\alpha_1}]_{\cw},\ldots,
  [p_{\alpha_k},q_{\alpha_k}]_{\cw}$
  are pairwise disjoint and occur in this clockwise cyclic order on
  $\partial\mathbb D$. Then there is a root polygon $\pgon$ such that
  $\widehat{\alpha}_1,\ldots,\widehat{\alpha}_k$ are vertices of $\pgon$ and
  occur in this clockwise cyclic order along $\partial \pgon$.
\end{lemma}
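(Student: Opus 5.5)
This is a direct assembly of the preceding lemmas. Set $\Omega=\{\alpha_1,\ldots,\alpha_k\}$. By hypothesis the closed arcs $[p_{\alpha_i},q_{\alpha_i}]_{\cw}$ are pairwise disjoint, which is exactly the hypothesis of \cref{lem:extend-prescribed-party-hats}. That lemma produces a finite party-hat cover $\Lambda\supseteq\Omega$ and, for each $i$, a point $x_{\alpha_i}\in(p_{\alpha_i},q_{\alpha_i})_{\cw}$ lying on no arc $[p_\gamma,q_\gamma]_{\cw}$ with $\gamma\in\Lambda\setminus\{\alpha_i\}$.

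Set $\pgon=\conv(\widehat\Lambda)$. Since $\Lambda$ is a finite party-hat cover, the first part of \cref{lem:party-hat-cover-convex-hull} gives $\DD\subseteq\pgon$. Its vertices are rescaled roots, so $\pgon$ is a root polygon in the sense of \cref{root-polygon-def}. The second part of \cref{lem:party-hat-cover-convex-hull}, applied with $\Omega\subseteq\Lambda$ and the points $x_{\alpha_i}$, shows that each $\widehat\alpha_i$ is a vertex of $\pgon$. It also shows that these vertices appear along $\partial\pgon$ in the same cyclic order as the points $x_{\alpha_i}$ on $\partial\DD$.

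It remains to check that this order is the prescribed clockwise order. The arcs are pairwise disjoint and occur clockwise in the order $\alpha_1,\ldots,\alpha_k$, and each $x_{\alpha_i}$ lies in the interior of the $i$-th arc. Hence $x_{\alpha_1},\ldots,x_{\alpha_k}$ occur in clockwise order on $\partial\DD$.

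The only step needing real care is the orientation claim inside \cref{lem:party-hat-cover-convex-hull}, which the paper calls straightforward. It says that radial-type matching between $\partial\DD$ and $\partial\pgon$ preserves orientation. To justify it, note that $\widehat\alpha_i$ is the unique point of $\pgon$ on the far side of the tangent line $T_{x_{\alpha_i}}$. Moving the tangency point clockwise around $\partial\DD$, the vertex of $\pgon$ maximizing the corresponding outward linear functional also moves clockwise around the convex polygon $\pgon$. This is because $\pgon$ and $\DD$ are both convex bodies and the outward normals of their supporting lines at these points coincide. Everything else is bookkeeping.
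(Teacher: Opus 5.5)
Your proof is correct and is exactly the paper's argument. \cref{lem:extend-prescribed-party-hats} supplies the cover $\Lambda$ and the private points $x_{\alpha_i}$, and \cref{lem:party-hat-cover-convex-hull} applied to $\pgon=\conv(\widehat\Lambda)$ shows that $\pgon$ is a root polygon whose vertices include the $\widehat\alpha_i$, in the clockwise order of the $x_{\alpha_i}$. Your added normal-fan justification of the orientation claim is sound, with one small correction: $\widehat\alpha_i$ is the unique \emph{vertex} of $\pgon$ strictly beyond $T_{x_{\alpha_i}}$, not the unique point, and this is what makes it the unique maximizer of the outward functional.
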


\begin{proof}
  By \cref{lem:extend-prescribed-party-hats}, there is a finite
  party-hat cover $\Lambda$ containing $\alpha_1,\ldots,\alpha_k$ 
  such that for each $i\in[k]$, there is a point 
  $x_i\in(p_{\alpha_i},q_{\alpha_i})_{\cw}$ that does not belong to 
  any of the arcs $[p_{\gamma},q_{\gamma}]_{\cw}$ for 
  $\gamma\in\Lambda\setminus\{\alpha_i\}$. Set 
  $\pgon=\conv\{\widehat\beta:\beta\in\Lambda\}$. By 
  \cref{lem:party-hat-cover-convex-hull}, the polygon $\pgon$ 
  contains $\mathbb D$, and each rescaled root $\widehat{\alpha}_i$ 
  is a vertex of $\pgon$. Furthermore, these vertices occur in the same 
  cyclic order as the points $x_1,\ldots,x_k$.
\end{proof}

We can now prove the main result of this section. 

\begin{proof}[Proof of \cref{boundary-coloring-main}]
  We claim that there do not exist distinct points $x_1,x_2,x_3,x_4$ 
  appearing in this clockwise cyclic order along $\partial\DD$ such that 
  $x_1,x_3$ are red and $x_2,x_4$ are blue. Suppose by way of contradiction 
  that such points $x_1,x_2,x_3,x_4$ exist. Choose $\varepsilon>0$ sufficiently 
  small, and let $U_i$ be the open disc in $\mathbb A$ of radius $\varepsilon$ 
  with center $x_i$. We may assume $\varepsilon$ is small enough that
  $U_1,U_2,U_3,U_4$ are pairwise disjoint. As a rescaled root approaches a 
  point of $\partial\DD$, both of its tangent points approach that point.
  Since each $x_i$ is an accumulation point of roots of its prescribed color, 
  we may therefore choose red rescaled roots $\widehat\alpha_1\in U_1$ and 
  $\widehat\alpha_3\in U_3$ and blue rescaled roots $\widehat\alpha_2\in U_2$ 
  and $\widehat\alpha_4\in U_4$ such that $[p_{\alpha_i},q_{\alpha_i}]_{\cw}
  \subseteq U_i$ for each $i\in\{1,2,3,4\}$. This implies that the closed 
  party-hat arcs of $\alpha_1,\alpha_2,\alpha_3,\alpha_4\in\Phi^+$ are pairwise 
  disjoint. \cref{lem:prescribed-party-hats-give-polygon} now produces a root
  polygon $\pgon$ in which $\widehat{\alpha}_1,\widehat{\alpha}_2,
  \widehat{\alpha}_3,\widehat{\alpha}_4$ occur as vertices in this cyclic order. 
  Each of the four portions of $\partial \pgon$ between consecutive prescribed 
  vertices contains a color change, contradicting the Polygon Lemma 
  (\cref{lem:polygon-lemma}). This proves the first statement of the theorem. 

  The preceding argument readily implies that there are at most two purple points 
  in $\partial\DD$. Indeed, if there are three purple points $z_1,z_2,z_3$ with 
  $z_2\in(z_1,z_3)_{\cw}$, then we can choose a fourth point $z_4\in(z_3,z_1)_{\cw}$. 
  If $z_4$ is red, then viewing $z_1,z_2,z_3$ as blue, red, blue yields a contradiction; 
  if $z_4$ is blue, then viewing $z_1,z_2,z_3$ as red, blue, red yields a contradiction. 
  It is similarly straightforward to check that each connected component of the 
  complement of the purple points in $\partial\DD$ must be monochromatic. 
\end{proof}

\section{Biclosed implies weakly separable} \label{sec:FinishProof}
In this section we will make use of the induced coloring 
of $\partial \DD$ to construct separating lines for biclosed 
sets. In particular, here is where we first begin using the 
full power of the biclosed condition, rather than just the 
weaker parabolic biclosed condition used for studying the coloring.

\begin{figure}
  \centering
  \includegraphics[height=10.574cm]{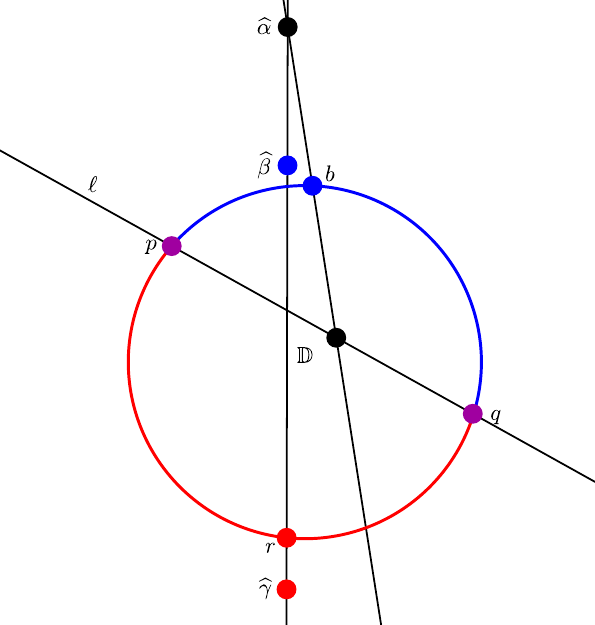}
  \caption{Illustrating the proof of \Cref{lem:two-purple}.}
  \label{fig:twopurple}
\end{figure}

The following lemma is proven similarly to \cite[Lemma 5.4]{BDHMMS25}, 
which covers the special case where $W=U_3$. 

\begin{lemma}\label{lem:two-purple}
  If $B$ is a biclosed set so that $\partial \DD $ has exactly two purple 
  points, then $B$ is weakly separable with the line through these two 
  points serving as the separating hyperplane.
\end{lemma}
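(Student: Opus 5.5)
Let $z_1,z_2$ be the two purple points and $L$ the line in $\AA$ through them, which is a chord of $\partial\DD$. By \cref{boundary-coloring-main}, the two open arcs of $\partial\DD\setminus\{z_1,z_2\}$ are monochromatic. They must have different colors, since otherwise only that color would accumulate at $z_1$ and $z_1$ would not be purple. Say the arc on side $L^{b}$ is blue and the arc on side $L^{r}$ is red. Let $H=\RR L$ be the corresponding plane in $V$. We need three things: every blue root lies in the closed half-space on the $L^b$ side, every red root lies in the closed half-space on the $L^r$ side, and $B\cap H$ is weakly separable in $\Phi^+\cap H$.

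\textbf{Step 1 (strict sides).} Suppose a blue rescaled root $\widehat\beta$ lies strictly on the $L^r$ side. Draw the tangent segments from $\widehat\beta$ to $\partial\DD$. Choose red roots $\widehat\gamma_1,\widehat\gamma_2$ accumulating at points of the red arc placed so that $\widehat\beta$ lies in the open segment (or nonnegative span) between them. This is possible because the red arc together with $\widehat\beta$ are all strictly on one side of $L$, and the two tangency points of $\widehat\beta$ lie in the red arc or close to it. Concretely, take a red point $y$ in the red arc with $\widehat\beta$ on the far side of the tangent line $L_y$ (\cref{hat-containment}). Then pick red roots near points $y_\pm$ on either side of $y$ so that the line through them separates $\widehat\beta$ from $\DD$ in the appropriate way, with $\widehat\beta$ in their convex hull.

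This gives $\beta\in\RR_{\ge0}\gamma_1+\RR_{\ge0}\gamma_2$, which contradicts closedness of $R$. This is exactly where full biclosedness, not just parabolic biclosedness, is used. The precise choice is the delicate geometric point, and I would copy the argument of \cite[Lemma 5.4]{BDHMMS25}. For a chord whose rank-2 subsystem is of hyperbolic type, the roots on it accumulate at both endpoints. So I would choose a red root near $y_+$ and use the line through it and another red root near $y_-$. The line through two points of $\AA\setminus\DD$ near $\partial\DD$ on the red side, with $\widehat\beta$ between, works when $\widehat\beta$ is far enough from $\partial\DD$. If $\widehat\beta$ is close to $\partial\DD$, shrink the neighborhoods. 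Red roots exist in every neighborhood of each point of the open red arc. The symmetric argument handles red roots on the blue side.

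\textbf{Step 2 (roots on $L$).} The roots with $\widehat\alpha\in L$ are those of the rank-2 subsystem $\Phi\cap H$. Since $L$ meets $\partial\DD$ in two points, this is a hyperbolic-type dihedral system; if it contains fewer than two roots, there is nothing to do. $B\cap H$ is biclosed in this rank-2 system, so by \cref{ex:rank-2} it is an initial or terminal section of the total order. Such a section is weakly separable in $H\cong\RR^2$: either it is finite or cofinite and hence separable, or it consists of all roots accumulating toward one endpoint. In the latter case the separating line in $H$ is the isotropic line through the limit point, and the residual set on that line is empty, hence weakly separable. Combining Steps 1 and 2 gives weak separability with $H$ as the first separating hyperplane.

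\textbf{Main obstacle.} The main obstacle is Step 1 in the boundary cases, where $\widehat\beta$ is near $z_1$ or $z_2$, or where the red arc near $\widehat\beta$'s tangency points is short. There I would approximate using red roots near the purple endpoints together with red roots in the interior of the arc, with $\widehat\beta$ in their positive span. I would take care that the supporting rank-2 planes contain roots on both sides of $\widehat\beta$.
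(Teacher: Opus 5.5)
Your Step 1 has a genuine gap: the configuration you aim for does not exist in general. Positive roots rescale by positive factors, so $\beta\in\RR_{\ge0}\gamma_1+\RR_{\ge0}\gamma_2$ means exactly that $\widehat\beta$ lies on the segment $[\widehat\gamma_1,\widehat\gamma_2]$. You therefore need the offending root to sit \emph{between} two red roots on a common line, and this fails in three ways.
\begin{enumerate}
  \item Two red roots chosen independently near points $y_\pm$ of the red arc will not be collinear with $\widehat\beta$. Your requirement that ``the line through them separates $\widehat\beta$ from $\DD$'' flatly contradicts $\widehat\beta$ lying on that line.
  \item A segment joining two points close to $\partial\DD$ stays in a small neighborhood of $\DD$. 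It can never reach a root at definite distance from $\DD$, so your remark that the construction works ``when $\widehat\beta$ is far enough from $\partial\DD$'' is backwards.
  \item The simple roots are vertices of $\conv(\widehat\alpha_1,\widehat\alpha_2,\widehat\alpha_3)\supseteq\widehat\Phi^+$. Hence a simple root lies in no open segment between two rescaled roots at all, and a blue simple root strictly on the red side can never be excluded by your method.
\end{enumerate}
Passing to the positive span of several red roots does not help, since closedness only concerns pairs. Also, the tangency points of $\widehat\beta$ need not lie in or near the red arc.

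The missing idea is to put the questioned root at an \emph{end} of the segment, work in the rank-$2$ subsystem on a line through it, and use closedness of the opposite color. This is how the paper argues.
\begin{itemize}
  \item Suppose $\widehat\beta$ is blue and strictly on the red side, and let $m$ be the midpoint of $\overline{z_1z_2}$. The line through $\widehat\beta$ and $m$ enters $\DD$ at a point of the open red arc and leaves at a point of the open blue arc.
  \item Choose a red root $\widehat\gamma$ so close to the entry point that the line $\widehat\beta\widehat\gamma$ still leaves $\DD$ at a point $y$ of the open blue arc. Then $\widehat\gamma$ lies between $\widehat\beta$ and $\DD$ on this line.
  \item The rescaled roots on this line form a rank-$2$ subsystem accumulating at $y$ (\cref{rem:rank-2}). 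Because $y$ is blue and \emph{not purple}, infinitely many of these roots near $y$ are blue; pick one, $\widehat\delta$.
  \item Then $\widehat\gamma\in[\widehat\beta,\widehat\delta]$. So the red root $\gamma$ lies in the cone of the blue roots $\beta$ and $\delta$, contradicting closedness of $B$.
\end{itemize}
Non-purpleness of the exit point is what produces a root of the right color \emph{on the prescribed line}. Knowing only that red roots exist near every point of the red arc is not enough.

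Separately, your reason why the two arcs have different colors is a non sequitur, since red roots could accumulate at $z_1$ alone. Use the first assertion of \cref{boundary-coloring-main} instead: if both arcs were blue, a point in each arc together with $z_1,z_2$ viewed as red would give an alternating quadruple. Your Step 2 is fine.
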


\begin{proof}
  Let $\Phi^+ = B\sqcup R$ be the decomposition of the positive roots 
  into two biclosed sets so that the induced coloring of $\partial\DD$ 
  has exactly two purple points, $p$ and $q$.  Let $\ell$ be the line 
  through those two points. By \cref{boundary-coloring-main}, the two 
  components of $\partial\DD\setminus\{p,q\}$ are monochromatic and have 
  different colors: otherwise, a point in each component, together with 
  the two purple points, would give an alternating four-point pattern. 
  Hence the two open half-spaces bounded by $\ell$ can be labeled either 
  blue or red according to which component of $\partial\DD\setminus\{p,q\}$ 
  it contains. We will show that if $\widehat\alpha$ is a rescaled root in 
  the blue half-space, then $\widehat\alpha$ is blue (i.e., in $B$). The 
  proof for the red half-space is similar; hence this will show that $\widehat B$ 
  is weakly separated by $\ell$. See \Cref{fig:twopurple}. 
  
  Let $\widehat\alpha$ be in the blue half-space. Let $m$ be the midpoint 
  of the line segment $\overline{pq}$. Then the line 
  $\overleftrightarrow{\widehat\alpha m}$ intersects $\partial\DD$ in a blue 
  point and a red point. Call the blue point $b$. Since it is blue, there 
  exist blue rescaled roots arbitrarily close to $b$. Hence we may find a 
  blue rescaled root $\widehat\beta$ so that the line connecting 
  $\widehat\alpha$ and $\widehat\beta$ is very close 
  to $\overleftrightarrow{\widehat\alpha m}$; in particular, we choose 
  $\widehat\beta$ so that the line connecting $\widehat \alpha$ and $\widehat\beta$ 
  intersects $\partial \DD$ at a red point $r$. The roots on the line connecting 
  $\widehat\alpha$ and $\widehat\beta$ form a rank 2 root system $\resroots'$. 
  By \Cref{rem:rank-2}, the point $r$ is an accumulation point for $\resroots'$. 
  Since $r$ is red and not purple, there must be (infinitely many) rescaled roots 
  in $\resroots'$ that are red. Choose one of these red rescaled roots $\widehat\gamma$ 
  sufficiently close to $r$ that $\widehat\beta$ lies between $\widehat\alpha$ and 
  $\widehat\gamma$. Then $\beta$ is a non-negative combination of $\alpha$ and $\gamma$, 
  so the fact that $B$ is biclosed, $\beta$ is blue, and $\gamma$ is red implies that 
  $\alpha$ is blue. This proves the claim. The restriction of $B$ to the two-dimensional 
  linear hyperplane corresponding to $\ell$ is separable by the rank-two characterization 
  of biclosedness. Hence, $B$ is weakly separable. 
\end{proof}

\begin{lemma}\label{lem:chainbelowinfinite}
  If $B$ is an infinite biclosed set, then there exists
  \[ B_0 \subset B_1 \subset \cdots \subset B \]
  so that each $B_i$ is biclosed with $|B_i|=i$.
\end{lemma}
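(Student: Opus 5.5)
We build the chain inductively, taking each $B_i$ to be an inversion set $\Inv(w_i)$ with $\ell(w_i)=i$ and $w_{i+1}=w_i s$ for a simple reflection $s$. We start with $B_0=\emptyset=\Inv(\id)$. Suppose $B_i=\Inv(w_i)\subseteq B$ has been built with $\ell(w_i)=i$. Since $B$ is infinite and $B_i$ is finite, $B\neq B_i$. The task is to find an ascent $s$ of $w_i$, i.e.\ $\ell(w_is)>\ell(w_i)$, whose associated root $\beta=w_i\cdot\alpha_s$ lies in $B$. Then $\Inv(w_is)=\Inv(w_i)\sqcup\{\beta\}\subseteq B$. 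This set is biclosed by \cref{lem:inversion-sets}, and it has exactly $i+1$ elements.

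To find such an ascent we translate by $w_i^{-1}$ using the action of \cref{lem:action}. Put $B'=w_i^{-1}\cdot B=\Inv(w_i^{-1})\triangle\{\alpha\in\Phi^+: w_i\alpha\text{ or }-w_i\alpha\in B\}$. By \cref{lem:action}(2), $B'$ is biclosed.

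\begin{enumerate}
\item Because $\Inv(w_i)\subseteq B$, the symmetric difference removes exactly the translates of $\Inv(w_i)$. So $B'=\{\alpha\in\Phi^+:w_i\alpha\in B\setminus\Inv(w_i)\}$.
\item Hence $B'$ is infinite, in particular nonempty, because $w_i$ maps $B'$ bijectively onto $B\setminus\Inv(w_i)$.
\item A nonempty biclosed set contains a simple root. Every positive root is a nonnegative combination of simple roots. If a biclosed $X$ contained no simple root, its complement would be closed and contain $\Delta$. One then argues inductively on depth that the complement contains all of $\Phi^+$: any non-simple positive root $\gamma$ has some $s$ with $\gamma$ in the nonnegative span of $\alpha_s$ and $s\gamma$, and $s\gamma$ has smaller depth. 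So $X=\emptyset$. Equivalently, one can apply \cref{lem:identity}: with $R'=\Phi^+\setminus B'$, if all edges at $v_{\id}$ were red there would be no color change at $v_{\id}$, forcing $B'=\emptyset$.
\end{enumerate}

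Thus some simple root $\alpha_s$ lies in $B'$. By step (1), $w_i\alpha_s$ is a positive root lying in $B\setminus\Inv(w_i)$. Positivity of $w_i\alpha_s$ is exactly the statement that $s$ is an ascent of $w_i$, and $\beta=w_i\alpha_s$ is the new inversion of $w_is$. This completes the induction.

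The main obstacle is the bookkeeping in step (1): checking that the paper's twisted action really sends $B$ to a set of positive roots, and that when $\Inv(w_i)\subseteq B$ it agrees with the naive preimage of $B\setminus\Inv(w_i)$. For this I would expand the definition of $w\cdot X$ with $w=w_i^{-1}$. One uses that $w_i^{-1}$ maps $\Inv(w_i)$ onto $-\Inv(w_i^{-1})$, and that it maps the remaining positive roots to positive roots. The rest of the argument is routine.
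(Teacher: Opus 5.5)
Your proposal is correct and is essentially the paper's proof: the paper likewise picks simple roots with $\alpha_{i_{k+1}}\in s_{i_k}\cdots s_{i_1}\cdot B$ (that is, $\alpha_{i_{k+1}}\in w_k^{-1}\cdot B$) and sets $B_k=\Inv(s_{i_1}\cdots s_{i_k})$. Your write-up also supplies the details the paper leaves implicit: the symmetric-difference computation showing that $w_i^{-1}\cdot B$ is the preimage of $B\setminus\Inv(w_i)$, which makes it nonempty and each new root a positive ascent root, and the depth-induction argument that a nonempty biclosed set contains a simple root.
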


\begin{proof}
  Each nonempty biclosed set contains a simple root $\alpha_i$. 
  Inductively construct a sequence of simple roots so that 
  $\alpha_{i_1}\in B$, and $\alpha_{i_{k+1}} \in s_{i_k}\cdots 
  s_{i_1}\cdot B$. Then define $B_k = \Inv(s_{i_1}\cdots s_{i_k})$. 
  We have $|B_k|=k$ and $B_k\subset B_{k+1} \subset B$, so the claim 
  follows.
\end{proof}

We now show that the biclosed sets that are unions of an increasing 
sequence of finite inversion sets are separated by a tangent line to $\DD$.

\begin{lemma}
  Let $B = \bigcup_{i\in \NN} B_i$, where 
  \[ B_0 \subset \cdots \subset B_k \subset \cdots \]
  is a filtration by biclosed sets with $|B_i|=i$. Then $B$ is weakly 
  separated by a tangent line to $\DD$.
\end{lemma}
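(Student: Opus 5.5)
Each $B_i$ is finite and biclosed, so by \cref{lem:inversion-sets} we have $B_i=\Inv(w_i)$ with $\ell(w_i)=i$. The inclusions give $w_0<w_1<\cdots$ in weak order, so $w_{i+1}=w_is$ for some $s\in S$ and the vertices $v_{w_i}$ form an infinite geodesic ray in the Cayley graph. By \cref{rem:inversion}, $\Inv(w_i)$ is the set of positive roots $\alpha$ with $\langle \widehat{w_i\cdot O},\alpha\rangle>0$. The plan is to let $x$ be a limit of the points $\widehat{w_i\cdot O}\in\DD$, show $x\in\partial\DD$, and take $L_x$ as the separating line.

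\textbf{Step 1: the limit point.} Pass to a subsequence with $\widehat{w_i\cdot O}\to x\in\DD$. Since the $w_i$ are distinct, only finitely many lie in any compact part of $\DD^\circ$, so $x\in\partial\DD$. Continuity of $\langle\cdot,\alpha\rangle$ gives the two inclusions. If $\alpha\in B$, then $\alpha\in B_i$ for all large $i$, so $\langle\widehat{w_i O},\alpha\rangle>0$ and hence $\langle x,\alpha\rangle\ge 0$. If $\alpha\notin B$, the same pairing is negative for every $i$, so $\langle x,\alpha\rangle\le 0$. Thus $\widehat B\subseteq\overline{L_x^+}$ and $\widehat R\subseteq\overline{L_x^-}$, which is the closed-half-space part of weak separability.

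\textbf{Step 2: the roots on $L_x$.} Weak separability also requires $B\cap H_x$ to be weakly separable inside $\Phi^+\cap H_x$. If $L_x$ contains at most one rescaled root, this is trivial: a single positive root or the empty set in a line is weakly separable. If $L_x$ contains two or more rescaled roots, then \cref{lem:cusps} says $\Phi^+\cap H_x$ is an affine rank $2$ parabolic subsystem. Its positive roots are totally ordered as in \cref{ex:rank-2}, and biclosedness of $B$ restricted to this plane makes $B\cap H_x$ an initial or terminal section. Such a section is separable in the $2$-dimensional space $H_x$, using the rank-two characterization of biclosedness, since the limit point $x$ of both ends lies outside the roots themselves. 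The recursive condition then bottoms out in dimension $1$, where it is immediate.

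\textbf{Main obstacle.} The delicate point is Step 1 when the subsequential limit is not unique or the ray approaches an ideal vertex. I expect this to be harmless, because any subsequential limit works: the inclusions in Step 1 hold for every such $x$. For the same reason it is not necessary to show that the whole ray converges.

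One more degenerate case needs ruling out: all positive roots lying on $L_x$, which would make $L_x$ useless. This cannot happen, because the roots span $V$.

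Finally, I would double-check the sign convention: \cref{rem:inversion} says $\alpha\in\Inv(w)$ exactly when $\langle \widehat{w\cdot O},\alpha\rangle>0$. This matches the choice of $L_x^+$ for $B$, with $\widehat{\alpha}$ on the side of the tangent line away from $\DD$.
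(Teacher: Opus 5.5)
Your proof is correct, but it is organized differently from the paper's.

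\textbf{The paper's route.} The paper does not construct the line. It cites \cite[Proposition 4.2]{HohlwegLabbe} to get \emph{some} weakly separating hyperplane $H$ for $B$, and then shows that $\widehat H$ must be tangent to $\DD$. It rules out two alternatives:
\begin{itemize}
\item $\widehat H$ cannot miss $\DD$. Otherwise $B$ would be finite or cofinite, and cofinite is excluded because finite biclosed sets satisfy $\conv\widehat B_i\cap\DD=\varnothing$.
\item $\widehat H$ cannot meet $\partial\DD$ in two points $p_1\neq p_2$. Otherwise blue roots near $p_1$ and near $p_2$ would lie in a single $B_i$, whose convex hull would then meet $\DD$.
\end{itemize}

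\textbf{Your route.} You produce the tangent line explicitly as $L_x$, where $x\in\partial\DD$ is a subsequential limit of the chamber centers $\widehat{w_i\cdot O}$. The closed half-plane condition then follows from continuity of $\langle\cdot,\alpha\rangle$. The restriction to $H_x$ is handled by the rank-two characterization of biclosedness.

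\textbf{What each buys.} Your version avoids the appeal to Hohlweg--Labb\'e. It also locates the point of tangency as the limit of the chamber path of the infinite reduced word. In fact the whole ray converges: $B$ is infinite, and a set contained in $\overline{L_x^+}$ can only accumulate at $x$, so two different subsequential limits are impossible. The paper's version instead shows that \emph{every} weakly separating hyperplane of such a $B$ is tangent.

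\textbf{Two small points to tidy.}
\begin{itemize}
\item You use that $B$ is biclosed, and you should justify it. A directed union of biclosed sets is biclosed: closedness is checked inside a single $B_i$, and coclosedness inside all of them.
\item The case split in Step 2 is unnecessary. The rank-two characterization gives separability of $B\cap H_x$ in $\Phi^+\cap H_x$ for any $2$-dimensional subspace, whatever the number of roots on $L_x$.
\end{itemize}
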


\begin{proof}
  By \cite[Proposition 4.2]{HohlwegLabbe}, $B$ is weakly separable. 
  Let $H$ be a separating hyperplane. It must be the case that $\widehat H$ 
  intersects $\DD$, for otherwise $B$ would be finite (clearly impossible) or 
  cofinite. The latter case is also impossible, since any cofinite set contains
  two roots $\alpha,\beta$ such that the line segment connecting 
  $\widehat\alpha,\widehat\beta$ intersects $\DD$, but no finite biclosed set 
  has that property.
    
  Color the roots in $B$ blue and the remaining roots red. If $\widehat H$ 
  intersects $\DD$ in points $p_1\neq p_2$, then there exist blue rescaled 
  roots $\widehat\alpha,\widehat\beta$ with $\widehat\alpha$ very close to 
  $p_1$ and $\widehat\beta$ very close to $p_2$. In particular, we may pick 
  $\widehat\alpha$ and $\widehat\beta$ so that the line spanned by $\widehat\alpha$ 
  and $\widehat\beta$ intersects $\DD$. But there is some finite $B_i\subset B$ 
  so that $\alpha,\beta\in B_i$. Since $B_i$ is a finite biclosed set, we have 
  $\mathrm{conv} \widehat B_i \cap \DD =\varnothing$. This is a contradiction, 
  so we must have $p_1=p_2$.
\end{proof}

\begin{figure}
  \centering
  \includegraphics[height=8.508cm]{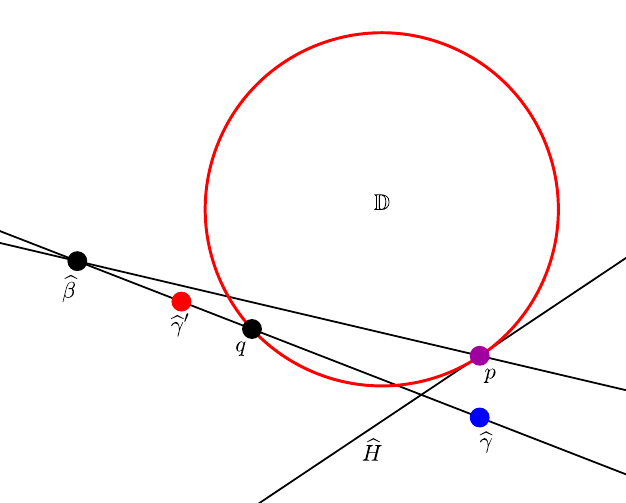}
  \caption{Illustrating the proof of \Cref{lem:sep-plane-if-one-purple}. }
  \label{fig:onepurple}
\end{figure}

\begin{lemma}\label{lem:sep-plane-if-one-purple}
  If $\Phi^+ = B\sqcup R$ are biclosed sets so that 
  $\partial\DD $ has all points red except for a single point $p$  that is purple, then 
  $B$ is weakly separable with the tangent line at $p$ as the separating hyperplane.
\end{lemma}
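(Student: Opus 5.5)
Let $T=L_p$ be the tangent line to $\partial\DD$ at $p$, and let $T^+$ be the open half-plane of $\AA$ on the side of $T$ not containing $\DD$. I will show that every rescaled root in $T^+$ is blue and every rescaled root in the other open half-plane $T^-$ (which contains $\DD^\circ$) is red. The roots on $T$ itself form at most a rank $2$ parabolic subsystem (\cref{lem:cusps}), or a single root. On that line $B$ is automatically separable by the rank-two characterization of biclosedness. Together these give weak separability with $T$ as the separating hyperplane.

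\textbf{Roots in $T^-$ are red.} This step copies the argument of \cref{lem:two-purple}. Let $\widehat\alpha\in T^-$ lie outside $\DD$. Then some line through $\widehat\alpha$ meets $\DD^\circ$ in two points, neither equal to $p$; both endpoints on $\partial\DD$ are red and not purple. Suppose $\alpha$ were blue. Since every point of $\partial\DD\setminus\{p\}$ is red but not blue, pick a point $r\in\partial\DD\setminus\{p\}$ with $\widehat\alpha$ on the tangent side, and choose a red rescaled root $\widehat\beta$ very close to $r$. Then the line $\overleftrightarrow{\widehat\alpha\widehat\beta}$ crosses $\DD$ and exits at a point $r'\neq p$ beyond $\widehat\beta$. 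By \cref{rem:rank-2}, $r'$ is an accumulation point of the rank-$2$ subsystem on this line. Because $r'$ is not blue, only finitely many roots of that subsystem are blue, so we can choose a red $\widehat\gamma$ near $r'$ with $\widehat\beta$ lying between $\widehat\alpha$ and $\widehat\gamma$. I have to be careful with the ordering here: the cleanest arrangement is $\widehat\alpha,\widehat\beta$ on one side and $\widehat\gamma$ chosen so that $\widehat\beta\in[\widehat\alpha,\widehat\gamma]$. Then $\beta$ is a nonnegative combination of $\alpha$ and $\gamma$. Closedness of $B$ would then require the blue-red-red pattern $\alpha,\beta,\gamma$ to be consistent, so instead I will rearrange the configuration to place a red root between two blue ones or a blue root between two reds. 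Concretely, I expect to run the \cref{lem:two-purple} argument with the roles swapped: take the red root $\widehat\beta$ near a red boundary point so that the line through $\widehat\alpha$ and $\widehat\beta$ passes through $\DD$ and exits at a red point $r'$, then pick a red $\widehat\gamma$ near $r'$ with $\widehat\alpha$ between $\widehat\beta$ and $\widehat\gamma$. This is possible because $\widehat\alpha$ and the chord can be arranged so that $\widehat\alpha$ lies between the two intersection regions. Coclosedness of $R$ then forces $\alpha\in R$.

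\textbf{Roots in $T^+$ are blue.} This is the main obstacle, since $p$ is the only blue point and there is no blue arc to draw from. I expect \cref{prop:finite_visibility} to help. If $T$ contains two rescaled roots, then $T^+$ contains only finitely many rescaled roots. Otherwise $T^+$ contains infinitely many, accumulating only at $p$. For $\widehat\alpha\in T^+$, consider lines through $\widehat\alpha$ close to $T$ that cut $\DD$ in a tiny chord near $p$. Since $p$ is blue, there are blue roots $\widehat\beta$ arbitrarily close to $p$, and these must lie in $T^+$ or on $T$ by the previous step. My plan is to choose such a blue $\widehat\beta$ so that the line $\overleftrightarrow{\widehat\alpha\widehat\beta}$ meets $\DD$ at two red points near $p$. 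Then find a red root $\widehat\gamma$ on that line with $\widehat\beta$ between $\widehat\alpha$ and $\widehat\gamma$; closedness of $R$ forces $\alpha$ blue, as in \cref{lem:two-purple}.

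If this geometry fails, I will use a fallback. If $B$ is finite, \cref{cor:finitecofinite} applies, and indeed all of $B$ lies in $T^+$, with separating line near $T$. If $B$ is infinite, \cref{lem:chainbelowinfinite} and the subsequent lemma show that $B$ is weakly separated by some tangent line $L_{p'}$. Since $B$ accumulates at $p'$, the point $p'$ is blue, so $p'=p$, and the separating line is the tangent line at $p$. This fallback in fact seems to be the cleanest route for the whole statement, with the first step serving only to dispose of the finite case.
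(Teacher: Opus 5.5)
Your treatment of the $T^-$ side has a genuine gap. Step~1 never uses the purple point $p$: it works only with red roots near red boundary points, and the configuration you settle on cannot occur. Suppose a line through $\widehat\alpha$ meets $\partial\DD$ at $r$ and $r'$. Since $\widehat\alpha\notin\DD$, it lies outside the chord $[r,r']$, and roots near $r$ and $r'$ lie just outside that chord. So along the line the order is always $\widehat\alpha,\widehat\beta,r,r',\widehat\gamma$, and $\widehat\alpha$ is never between $\widehat\beta$ and $\widehat\gamma$. The pattern blue, red, red is an initial section of the rank-$2$ order, so it is consistent with biclosedness and gives no contradiction. As a sanity check, your argument would apply verbatim to $\widehat\alpha\in T^+$ and ``prove'' those roots red as well.

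The missing idea, which is the heart of the paper's proof, is to use blue roots near $p$. For $\widehat\alpha\in T^-$, the line through $\widehat\alpha$ and $p$ is not tangent. It therefore meets $\partial\DD$ again at some $q\neq p$ lying between $\widehat\alpha$ and $p$. Take a blue $\widehat\gamma$ close enough to $p$. The segment $[\widehat\alpha,\widehat\gamma]$ still crosses $\DD$, entering at a point $q'$ near $q$, and $q'$ is red and not purple. Infinitely many roots of the rank-$2$ subsystem on this line accumulate at $q'$ from the $\widehat\alpha$ side, so some red $\widehat\delta$ lies strictly between $\widehat\alpha$ and $\widehat\gamma$. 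Closedness of $B$ then forces $\alpha\in R$.

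Your fallback does not close the gap. The lemma following \cref{lem:chainbelowinfinite} says that the union $N=\bigcup_i B_i$ of the chain is weakly separated by a tangent line. It says nothing about $B$ itself, and $N$ can be a proper subset of $B$; it is whenever $B$ has two purple points. So the fallback yields only $p'=p$ and that every root in the open half-plane $T^+$ lies in $N\subseteq B$. Showing that the roots in $T^-$ are red is precisely what remains. The paper's proof is exactly this fallback for $T^+$, combined with the blue-near-$p$ argument above for $T^-$.

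Your direct Step~2 is a valid, more elementary substitute for the filtration on the $T^+$ side. It needs neither Step~1 nor the filtration lemmas, but the geometry should be corrected. Since $\widehat\alpha\in T^+$, the point $p$ lies in the open arc of $\PH(\alpha)$. Hence any blue $\widehat\beta$ sufficiently near $p$, on either side of $T$, lies between $\widehat\alpha$ and $\DD$ on a secant. The far endpoint of that secant is near the second intersection of the line through $\widehat\alpha$ and $p$ with $\partial\DD$, not near $p$. It is therefore red and not purple, which is all you need.

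Finally, the finite case is vacuous: a purple point forces $B$ to be infinite.
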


\begin{proof}
  Let $B_0\subset B_1 \subset \cdots \subset B$ be the filtration from
  \Cref{lem:chainbelowinfinite}, and set $N= \bigcup_{k\in \NN} B_k$. 
  Let $H$ be the weakly separating hyperplane of $N$ so that $\widehat H$ 
  is tangent to $\partial \DD$. Every neighborhood of the point of tangency 
  for $\widehat H$ has infinitely many roots from $N$. In particular, that 
  point in $\partial\DD$ is blue or purple, so that point must in fact be $p$. 
  See \Cref{fig:onepurple}. 
    
  We have $N\subseteq B$. So let $\beta \in \Phi^+\setminus (N \cup H)$. 
  We will show $\beta \in R$, implying that $H$ is a weakly separating 
  hyperplane for $B$. Since $\beta$ is not on $H$, the line connecting 
  $\widehat\beta$ with $p$ must intersect $\partial\DD$ twice. There are
  infinitely many blue rescaled roots in a neighborhood of $p$; let 
  $\widehat\gamma$ be blue and close enough to $p$ so that the line 
  segment connecting $\widehat \beta$ to $\widehat \gamma$ intersects 
  $\partial\DD$ twice. Let $q$ be an intersection point distinct from $p$. 
  Then in the rank two subsystem $\Phi'$ spanned by $\beta$ and $\gamma$, 
  there are infinitely many rescaled roots in any neighborhood of $q$. 
  Since $q$ is colored red, it follows from \Cref{rem:rank-2}
  that there is some root $\gamma'$ in $\Phi'$ that is red; by choosing 
  a rescaled root close enough to $q$, we can guarantee that $\gamma'$ 
  is in the positive span of $\beta$ and $\gamma$. Hence, by the biclosed 
  condition, $\beta$ is red. The restriction of $B$ to $H$ is separable by 
  the rank-two characterization of biclosedness, completing the proof of 
  weak separability.
\end{proof}

\begin{remark}
  In the universal group, our proof for this case worked for parabolic 
  biclosed sets, but here we are using the full biclosed condition. We 
  do not know if there can be a parabolic biclosed set with a unique 
  purple point that is not weakly separable.
\end{remark} 

\begin{remark}
  \Cref{lem:sep-plane-if-one-purple} also handles the case where all 
  points in $\partial\DD$ are blue except one purple point.
\end{remark}

\begin{lemma}\label{lem:nopurple}
  If $B$ is a biclosed set so that $\partial\DD$ has no purple points, 
  then $B$ is either finite or cofinite.
\end{lemma}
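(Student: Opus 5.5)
If $\partial\DD$ has no purple points, then by \cref{cor:red-blue} every point of $\partial\DD$ is red or blue but not both. My plan is to show first that the whole circle then has a single color, and then that a monochromatic circle forces one side to be finite.

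\textbf{Step 1: the coloring is constant.} Let $\mathcal R\subseteq\partial\DD$ be the red points and $\mathcal B$ the blue points. Each is closed, because a limit of accumulation points of red rescaled roots is again such an accumulation point. By hypothesis $\mathcal R\cap\mathcal B=\emptyset$, and by \cref{cor:red-blue} $\mathcal R\cup\mathcal B=\partial\DD$. Since $\partial\DD$ is connected, one of the two sets is empty. By symmetry (swap $B$ and $R$, both biclosed), assume every point of $\partial\DD$ is red and none is blue.

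\textbf{Step 2: only finitely many blue roots.} Suppose $B$ were infinite. The rescaled roots lie in the compact triangle $\conv(\widehat\alpha_1,\widehat\alpha_2,\widehat\alpha_3)$, so infinitely many blue rescaled roots have an accumulation point there. \Cref{accumulationpoints} says $\widehat\Phi$ is discrete in $\AA\setminus\DD$, and no roots lie in $\DD^\circ$. So the accumulation point lies on $\partial\DD$, and that point is blue. This contradicts Step 1, so $B$ is finite. In the symmetric case $R$ is finite, so $B$ is cofinite.

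\textbf{Main obstacle.} I expect the only delicate point to be the closedness of $\mathcal R$ and $\mathcal B$ together with the claim that accumulation points of roots lie on $\partial\DD$. Both follow directly from \cref{accumulationpoints} and the definition of accumulation point. A diagonal argument handles closedness: pick roots of the given color converging to points that themselves converge. No biclosedness is needed for this lemma beyond the fact that $B$ and $R$ partition $\Phi^+$, and \cref{cor:finitecofinite} then makes $B$ separable.
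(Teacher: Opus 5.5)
Your proof is correct and follows the paper's argument. The paper reduces to the case where every point of $\partial\DD$ is red, then observes that an infinite $B$ would have an accumulation point on $\partial\DD$, which would be blue and therefore purple. The only difference is that the paper simply asserts ``without loss of generality'' for the reduction, while your Step 1 justifies it with a self-contained connectedness argument: the red and blue point sets are closed (as derived sets), disjoint, and together cover $\partial\DD$. This avoids needing the monochromatic-components clause of \cref{boundary-coloring-main}.
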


\begin{proof}
  Without loss of generality, every point of $\partial \DD $ is red. 
  If $B$ were infinite, then it would have an accumulation point on 
  $\partial\DD$, which must then be purple. This is a contradiction, 
  so $B$ is finite.
\end{proof}

\begin{theorem}[Weakly Separable]\label{thm:separable}
  Let $W$ be a rank~$3$ Coxeter group with root system $\Phi$.  
  Every biclosed subset of $\Phi^+$ is weakly separable.
\end{theorem}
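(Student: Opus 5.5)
The plan is to split into cases according to the type of the Coxeter group and then according to the number of purple points on $\partial\DD$. If $W$ is reducible, spherical, or Euclidean (affine), the result is already known: in finite type every biclosed set is an inversion set, hence separable by \cref{lem:inversion-sets}, and the affine rank~$3$ case is due to Wang and to Barkley--Speyer. The reducible case $\Cox(2,2,r)$ reduces to rank~$2$ together with a rank~$1$ factor. There, biclosed sets are products of initial or terminal sections in the dihedral factor (\cref{ex:rank-2}) with a choice for the isolated root, and such sets are visibly weakly separable. Since \cref{independence2} says biclosedness and, in rank $3$, weak separability do not depend on the choice of root system, we may also work with the normalization $\ip{\alpha_i}{\alpha_j}=-1$ fixed in \cref{def:roots} whenever $s_is_j$ has infinite order. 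So the main case is $W=\Cox(p,q,r)$ hyperbolic with that form.

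In the hyperbolic case, let $B$ be biclosed and $R=\Phi^+\setminus B$. Biclosed sets are parabolic biclosed, so the boundary coloring of \cref{sec:boundarycolor} applies. By \cref{boundary-coloring-main} there are at most two purple points, and the complementary arcs are monochromatic. We then argue by the number of purple points.
\begin{enumerate}
\item If there are no purple points, \cref{lem:nopurple} shows that $B$ is finite or cofinite, so \cref{cor:finitecofinite} makes it separable, hence weakly separable.
\item If there are exactly two purple points, \cref{lem:two-purple} gives weak separability directly.
\item If there is exactly one purple point $p$, then $\partial\DD\setminus\{p\}$ is connected and hence monochromatic. If it is red, we apply \cref{lem:sep-plane-if-one-purple}. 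If it is blue, we swap the roles of $B$ and $R$ (both are biclosed) and apply the same lemma.
\end{enumerate}

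In the blue one-purple case, weak separability of $R$ yields weak separability of $B$ via the same hyperplane. The set $B\cap H$ is the complement of $R\cap H$ within the rank-two subsystem $\Phi^+\cap H$. It is separable there by the rank-two characterization of biclosedness, since biclosed sets in rank $2$ are initial or terminal sections. The lowest-dimensional condition, in dimension $1$, is trivial.

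The main obstacle I expect is bookkeeping rather than new mathematics. The purple-point lemmas only give a separating hyperplane with the recursive condition on $H\cap\Phi^+$ checked through rank-two biclosedness. One has to verify that this matches the definition of weak separability, including the one-dimensional intersections. One also has to make sure the reduction to the normalized form with $\ip{\alpha_i}{\alpha_j}=-1$ is legitimate via \cref{independence2}, since the earlier lemmas, starting from \cref{accumulationpoints}, depend on that choice.
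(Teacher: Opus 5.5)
Your proposal is correct and follows essentially the same route as the paper. It reduces to the irreducible hyperbolic case using realization-independence and the known finite, affine and reducible results, then splits by the number of purple points, invoking \cref{lem:nopurple} with \cref{cor:finitecofinite}, \cref{lem:sep-plane-if-one-purple}, or \cref{lem:two-purple}; your explicit $B\leftrightarrow R$ swap in the blue one-purple case just spells out what the paper covers by its remark after \cref{lem:sep-plane-if-one-purple}.
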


\begin{proof}
  By \cite[Remark 3.10]{BDHMMS25}, the claim depends only on $W$ and 
  not the choice of realization for the root system $\Phi$. If $W$ is 
  reducible, its root system is a direct sum of systems of ranks at 
  most $2$; a biclosed set restricts to a separable set in each summand, 
  and the separating functionals extend by direct sum. Thus, we may assume 
  that $W$ is irreducible. If $W$ is finite or affine, then the result is 
  shown in \cite[Theorem B]{barkley:2023affine}. Otherwise, by 
  \Cref{rem:mostlyhyperbolic}, $W$ is hyperbolic. In that case, taking a 
  hyperbolic realization for $\Phi^+$ as in \Cref{def:roots}, we are in 
  the setup of this paper. 

  Let $\Phi^+ = B\sqcup R$ be a decomposition of the positive roots into 
  biclosed sets. Then there is an induced coloring of the points of 
  $\partial \DD$ as described in \Cref{sec:boundarycolor}. If there are no 
  purple points on $\partial\DD$, then $B$ is finite or cofinite by 
  \Cref{lem:nopurple}. In that case, \Cref{cor:finitecofinite} implies 
  that $B$ is weakly separable. Otherwise, $\partial\DD$ has at least one 
  purple point. By \Cref{boundary-coloring-main}, there is either exactly 
  one purple point or exactly two purple points. In those cases, 
  \Cref{lem:sep-plane-if-one-purple} and \Cref{lem:two-purple} respectively 
  show that $B$ is weakly separable.
\end{proof}

\section{Consequences of the Main Result} \label{sec:Consequences}

Here we briefly give consequences of our result for all rank 3 
Coxeter groups that biclosed sets are weakly separable.  First, 
we show that Dyer's lattice conjecture holds in rank 3.

\begin{corollary} \label{cor:lattice}
  For any Coxeter group of rank 3, its extended weak order is a 
  complete lattice.
\end{corollary}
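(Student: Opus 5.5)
The plan is to combine \Cref{thm:separable} with the lattice theorem for weakly separable sets in rank $3$, handling separately the cases not covered by the hyperbolic setup. By \Cref{thm:separable}, every biclosed subset of $\Phi^+$ is weakly separable. Conversely, every weakly separable set is biclosed, as noted after the definition of separable sets. So for rank~$3$ the extended weak order coincides, as a poset under inclusion, with the poset of weakly separable subsets of $\Phi^+$. We then invoke the known result \cite{LabbeThesis,barkley:2023affine} that in a rank $3$ root system the weakly separable sets form a lattice. By \Cref{independence2}, both biclosedness and weak separability in rank $3$ are independent of the realization, so we may use whichever realization the cited result is stated for.

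The statement asks for a \emph{complete} lattice, so the main step is completeness. I would prove it directly with a compactness argument. The biclosed sets form a closed subset of $\{0,1\}^{\Phi^+}$ in the product topology, since each closure condition involves only finitely many roots at a time: for $\alpha,\beta$ and $\gamma$ in the nonnegative span, the conditions $\alpha,\beta\in X\Rightarrow\gamma\in X$ and the analogous condition for the complement are clopen. Given an arbitrary family $\{X_i\}$, the finite joins $X_{i_1}\vee\cdots\vee X_{i_n}$ exist by the lattice property and form a directed set. Any limit point $Y$ of this net in the compact space $\{0,1\}^{\Phi^+}$ is biclosed. I would then check two things: that $Y$ contains every $X_i$, which holds because eventually every finite join contains $X_i$; and that $Y\subseteq Z$ for any biclosed upper bound $Z$, which holds because every finite join lies in $Z$ and $\{X: X\subseteq Z\}$ is closed. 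Hence $Y$ is the least upper bound. Meets follow from joins, since complementation $X\mapsto\Phi^+\setminus X$ is an order-reversing involution on biclosed sets, or alternatively as the join of all lower bounds. This gives a complete lattice.

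Finally, the degenerate cases must be handled. If $W$ is finite, there are finitely many biclosed sets and they form the weak order, which is classically a lattice. In affine type the conclusion is \cite{barkley:2023affine}. If $W$ is reducible, $\Cox(2,2,r)$ is a product of rank $\le 2$ pieces, and biclosed sets split as products of biclosed sets of the factors. Each factor's extended weak order is a chain or a finite lattice by \Cref{ex:rank-2}, so the product is a complete lattice. I expect the only real subtlety to be ensuring that the cited rank-$3$ lattice theorem applies to the realization in hand; the independence remark settles this.
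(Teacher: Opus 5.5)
Your proof is correct and follows the paper's route. \Cref{thm:separable} identifies biclosed sets with weakly separable sets, the cited rank~$3$ lattice theorem supplies finite joins, arbitrary joins come from the increasing net of finite joins, and meets come from complementation; the cluster point of that net in $\{0,1\}^{\Phi^+}$ is just its union, which is exactly the paper's directed-union argument. Your separate treatment of degenerate cases is unnecessary, since \Cref{thm:separable} already covers every rank~$3$ group. Also, for $\Cox(2,2,\infty)$ the infinite dihedral factor's extended weak order is neither a chain nor finite (it is two chains of type $\omega+1+\omega^*$ glued at $\emptyset$ and the full set), though it is still a complete lattice.
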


\begin{proof}
  This follows from \Cref{thm:separable} and 
  \cite[Corollary 2.36]{LabbeThesis}, which asserts that the weakly 
  separable sets in a rank 3 root system form a lattice. A directed 
  union of biclosed sets is biclosed, so the union of the joins of 
  finite subfamilies gives the join of any family. Taking complements 
  gives arbitrary meets. 
\end{proof}

To use the separability of biclosed sets to prove Dyer's Conjecture A, 
we will employ the following lemma about weakly separable sets. This 
was essentially proved in \cite[Proposition 7.3]{BDHMMS25}, but here we 
give a slightly different (and more general) proof.

\begin{lemma}\label{lem:separablecover}
  Let $\Phi^+$ be the set of positive roots of a reduced Coxeter root 
  system spanning an $n$-dimensional vector space $V$, and let 
  $B_1\lessdot B_2$ be a cover relation of weakly separable sets. 
  Then $|B_2\setminus B_1| =1$.
\end{lemma}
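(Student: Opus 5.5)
We argue by contradiction: assuming $|B_2\setminus B_1|\geq 2$, we will construct a weakly separable set $B'$ with $B_1\subsetneq B'\subsetneq B_2$. The construction proceeds by induction on $n=\dim V$, using the recursive structure of weak separability. The base case $n\le 1$ is trivial, and the case of dimension two is also easy: the positive roots in a rank~$2$ subsystem are totally ordered as in \cref{ex:rank-2}, and the weakly separable sets there are exactly the initial sections. Two initial sections form a cover relation precisely when they differ by a single root.

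For the inductive step, fix weakly separating data. Let $f_1$ be a linear functional with $f_1\ge 0$ on $B_1$ and $f_1\le 0$ on $\Phi^+\setminus B_1$, with $B_1\cap\ker f_1$ weakly separable in $\Phi^+\cap\ker f_1$. Let $f_2$ be the analogous functional for $B_2$. Consider the pencil $g_t=(1-t)f_1+tf_2$ for $t\in[0,1]$. Any root in $B_1$ has $f_1\ge0$ and, since $B_1\subseteq B_2$, also $f_2\ge 0$. Any root outside $B_2$ has $f_1\le0$ and $f_2\le0$. So every $g_t$ has the correct sign on $B_1$ and on $\Phi^+\setminus B_2$.

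The roots $\alpha\in D:=B_2\setminus B_1$ satisfy $f_1(\alpha)\le 0\le f_2(\alpha)$, so each has a crossing time $t_\alpha\in[0,1]$ at which $g_{t_\alpha}(\alpha)=0$. The main idea is to pick an intermediate time $t$ and define
\[
B'=B_1\cup\{\alpha\in D: g_t(\alpha)>0\}\cup C,
\]
where $C$ is a weakly separable choice inside the hyperplane $\ker g_t$. The set $C$ is produced by the induction hypothesis applied to the root subsystem $\Phi\cap\ker g_t$, which lives in dimension $n-1$. Here one uses that the restriction of $B_1$ and $B_2$ to $\ker g_t$ remain weakly separable with $B_1\cap\ker g_t\subseteq B_2\cap\ker g_t$. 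If these restrictions differ in at least two roots, the induction hypothesis (applied to a saturated chain between them, or directly) supplies a set strictly between them. One should check that $\Phi\cap\ker g_t$ is a root system to which the lemma applies. Reducedness is preserved; if necessary, one phrases the induction for arbitrary finite-type-free sets of vectors where the rank~$2$ total-order argument still works.

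The case analysis is as follows. If the crossing times $t_\alpha$ for $\alpha\in D$ are not all equal, choose $t$ strictly between two distinct values. Then at least one root of $D$ is strictly positive under $g_t$ and at least one is strictly negative, so $B'$ lies strictly between $B_1$ and $B_2$. If all crossing times coincide at some $t^*$, then all of $D$ lies in $\ker g_{t^*}$, and the problem reduces to the $(n-1)$-dimensional system $\Phi\cap\ker g_{t^*}$ by induction.

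The main obstacle is verifying that $B'$ really is weakly separable. This requires handling roots of $B_1$ and of $\Phi^+\setminus B_2$ that lie on $\ker g_t$, since the recursive condition on the hyperplane must be checked for them. In particular, roots with $f_1=f_2=0$ sit on every hyperplane of the pencil, and their status is governed by the lower-dimensional data of $B_1$ and $B_2$. The approach is to perturb $t$ slightly off the finitely many critical times where new roots enter $\ker g_t$. Finiteness of these critical times is not automatic, since $\Phi$ is infinite, so one would argue via the lower-dimensional separations. Roots on $\ker f_1\cap\ker f_2$ are then handled by the induction hypothesis applied to that subspace.
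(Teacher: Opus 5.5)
Your strategy differs from the paper's. You interpolate only the top-level separating functionals and induct on dimension. The paper instead encodes each weakly separable set by a full lexicographic tuple $(f_1,\dots,f_n)$ and interpolates all $n$ functionals at once. At the critical parameter it then takes an arbitrary separating functional on the roots where every $f_i^\lambda$ vanishes, so it uses no induction.

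\textbf{The crux is left open.} You call the weak separability of $B'$ the main obstacle and offer only a perturbation off critical times whose finiteness you cannot establish (in general they are only countable). No perturbation is needed. Let $C$ be any weakly separable subset of $\Phi^+\cap\ker g_t$ with $B_1\cap\ker g_t\subseteq C\subseteq B_2\cap\ker g_t$. Then $g_t\ge 0$ on $B'$, $g_t\le 0$ on $\Phi^+\setminus B'$, and $B'\cap\ker g_t=C$. So $B'$ is weakly separable for every $t$ with $g_t\neq 0$. In your first case just take $C=B_1\cap\ker g_t$.

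This relies on a fact you assert but should prove: the intersection of a weakly separable set with any linear subspace $U$ is weakly separable in $\Phi^+\cap U$. Induct on dimension, passing to $H$ if $U\subseteq H$, and otherwise using the hyperplane $H\cap U$ of $U$.

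Two smaller repairs are needed.
\begin{itemize}
\item Roots of $D$ in $\ker f_1\cap\ker f_2$ have no crossing time. The dichotomy should read: either two roots of $D$ have different crossing times, or $D\subseteq\ker g_{t^*}$ for some $t^*$.
\item $\Phi\cap\ker g_t$ need not be a root system of the kind in the statement. Run the induction over arbitrary sets of pairwise non-proportional vectors in an open half-space, which is all the argument uses; dimension one is then trivial.
\end{itemize}

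\textbf{The genuine gap} is your second case when $g_{t^*}\equiv 0$, i.e.\ $f_2=-cf_1$ with $c>0$. Then $\ker g_{t^*}=V$ and there is no dimension drop.

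This situation can be forced. Suppose $s_2s_3$ has infinite order, and let $x$ be the point where the line $L$ through $\widehat\alpha_2,\widehat\alpha_3$ touches $\partial\DD$. Let $B_1$ be the roots on $L$ on the $\widehat\alpha_2$ side of $x$, and let $B_2$ be $\Phi^+$ minus the roots on $L$ on the $\widehat\alpha_3$ side. A weakly separating line for either set must pass through $x$. Any such line other than $L$ meets $\DD^\circ$, so it has roots of $\Phi^+\setminus B_1$ (resp.\ of $B_2$) strictly on both sides. Hence both sets are separated only by $L$, with opposite orientations, and all roots off $L$ share one crossing time.

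In this degenerate case $f_1\le 0$ on $\Phi^+$ and $B_1\cup(\Phi^+\setminus B_2)\subseteq K:=\ker f_1$, so you must descend a level.
\begin{itemize}
\item If $D\cap K\neq\varnothing$, then $B_2\cap K$ lies strictly between $B_1$ and $B_2$ (because $\Phi^+\not\subseteq K$).
\item Otherwise $D=\Phi^+\setminus K$, and you need a second pencil $\phi_s=h-sf_1$. Here $h$ is a first-level functional for $B_1\cap K$ in $\Phi^+\cap K$, extended to $V$.
\end{itemize}
For each $s$, the set $B_1\cup\{\gamma\in D:\phi_s(\gamma)>0\}$ is weakly separable, using $f_1$ as the next functional on $\ker\phi_s$. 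Consider the values $-h(\gamma)/\abs{f_1(\gamma)}$ for $\gamma\in D$. If two of them differ, they give a set strictly between. If they all equal some $s^*$, then $D$ lies in the hyperplane $\ker\phi_{s^*}$ (note $\phi_{s^*}|_K=h|_K\neq 0$), and your induction applies.

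Incidentally, the paper's final step needs exactly your reduction. At its critical $\lambda$, only $f_1^\lambda(\alpha)=f_1^\lambda(\beta)=0$ is forced. For example, take type $A_1^3$ with $\Phi^+=\{e_1,e_2,e_3\}$, $B=\varnothing$, $B'=\{e_1,e_2\}$, and tuples $(x+y+z,\,x+3y,\,z)$ and $(-x-y+z,\,x+3y,\,z)$. The critical value is $\lambda=\tfrac12$, but $f_2^{1/2}(e_1)=1$. So when $f_1^\lambda\neq 0$ one must pass to $\ker f_1^\lambda$ and induct as you do.
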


\begin{proof}
  By the hyperplane separation theorem, a biclosed set $B$ is weakly 
  separable if and only if there are linear functionals $f_1,\ldots,f_n 
  : V\to \RR$ such that $B$ is exactly the set of roots $\alpha\in \Phi^+$ 
  so that 
  \[ (f_1(\alpha),f_2(\alpha),\ldots,f_n(\alpha)) < (0,0,\ldots, 0) \]
  in the lexicographic order on tuples of real numbers. Let $B <B'$ be 
  weakly separable sets, and let $(f_i)$ and $(f_i')$ be their separating 
  tuples. For $\lambda \in [0,1]$, define the functional 
  $f^\lambda_i=(1-\lambda)f_i + \lambda f_i'$. We can form the tuple of 
  functions $(f^\lambda_i)$. Each such tuple forms a weakly separable set 
  $B_\lambda$ by taking the set of roots whose values are less than 
  $(0,0,\ldots,0)$. If $\lambda\leq \lambda'$, then $B_{\lambda}\leq B_{\lambda'}$. 
  In particular, we have $B\leq B_\lambda \leq B'$ for all $\lambda$.
    
  Assume $\alpha,\beta \in B'\setminus B$ are two distinct roots. We will 
  show there is a weakly separable set $B''$ so that $B<B'' < B'$. This will 
  prove the lemma, by contraposition. If there exists a $\lambda \in [0,1]$ 
  so that $|\{\alpha,\beta\}\cap B_\lambda|=1$, then $B_\lambda$ is distinct 
  from $B$ and $B'$, so it satisfies the claim. Otherwise, let $\lambda$ be 
  the supremum of $\{\lambda\mid \{\alpha,\beta\}\cap B_\lambda=\varnothing\}$. 
  Then $f^\lambda_i(\alpha)=f^{\lambda}_i(\beta)=0$ for all $i$.  Let $\Phi'$ 
  be the root subsystem consisting of all roots $\gamma$ so that 
  $f^\lambda_i(\gamma)=0$ for all $i$. Let $V'$ be the span of $\Phi'$. Take 
  any functional $g:V'\to \RR$ such that $g(\alpha)<0$ and $g(\beta)>0$. Then 
  define the set of positive roots
  \[ B'' \coloneqq B_\lambda\cup \{\gamma\in \Phi'^+\mid g(\gamma)<0\}. \]
  Then $B''$ is weakly separable and satisfies $B < B'' < B'$, as desired.
\end{proof}

Now we can deduce Dyer's Conjecture A for rank 3 Coxeter groups.

\begin{corollary}
  For any Coxeter group of rank 3, if $B_1\lessdot B_2$ is a 
  cover relation in extended weak order, then $|B_2\setminus B_1| = 1$. 
\end{corollary}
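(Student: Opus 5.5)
The plan is to combine \Cref{thm:separable} with \Cref{lem:separablecover}. The key point is that, in rank $3$, biclosed sets and weakly separable sets are the same sets. Every weakly separable set is biclosed, which holds for arbitrary $\mathcal V$ as noted after the definition of separable sets. Conversely, every biclosed subset of $\Phi^+$ is weakly separable by \Cref{thm:separable}. Hence the extended weak order and the poset of weakly separable subsets of $\Phi^+$, both ordered by inclusion, are literally the same poset.

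First I would handle the realization issue. \Cref{thm:separable} is stated for an arbitrary rank $3$ Coxeter group $W$ with root system $\Phi$. Biclosedness does not depend on the realization (\Cref{independence2}), so the extended weak order can be computed in any convenient reduced root system for $W$. For instance, I would use the hyperbolic realization of \Cref{def:roots} in the hyperbolic case, and the standard realizations in the finite, affine, and reducible cases. In that realization the biclosed sets coincide with the weakly separable sets.

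Next, let $B_1\lessdot B_2$ be a cover in extended weak order. Both sets are weakly separable. Suppose there were a weakly separable set $B''$ with $B_1<B''<B_2$. Then $B''$ is biclosed, which would contradict the cover relation. So $B_1\lessdot B_2$ is also a cover among weakly separable sets.

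Finally, \Cref{lem:separablecover} gives $|B_2\setminus B_1|=1$. The only step needing care is checking that the hypotheses of \Cref{lem:separablecover} hold. Namely, the root system must be a reduced Coxeter root system spanning $V$, and the characterization by lexicographic sign vectors used there must apply. Both are satisfied by the chosen realization of rank $3$ root systems. I expect no real obstacle beyond this bookkeeping.
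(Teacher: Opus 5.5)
Your proposal is correct and matches the paper's proof: \Cref{thm:separable} makes biclosed and weakly separable sets coincide, so a cover in extended weak order is a cover among weakly separable sets, and \Cref{lem:separablecover} then gives $|B_2\setminus B_1|=1$. The realization bookkeeping you add is harmless; the paper handles realization-independence inside the proof of \Cref{thm:separable} instead.
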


\begin{proof}
  By \Cref{thm:separable}, a cover relation $B_1\lessdot B_2$ 
  of biclosed sets is the same as a cover relation of weakly 
  separable sets. Hence by \Cref{lem:separablecover}, 
  $|B_2\setminus B_1|=1$.
\end{proof}

\section*{AI Statement}
We did not use AI in any manner in producing this paper. 

\section*{Acknowledgments}
We thank the Banff International Research Station (BIRS) for 
hosting a workshop where this research project began, in particular 
bringing all of us to a wonderful work environment for a week.   
We especially thank the organizers of the BIRS workshop on Lattice 
Theory, namely Nathan Williams, Emily Barnard, Cesar Ceballos, 
Colin Defant, and Osamu Iyama, as well as conference participants  
Nathan Reading and Oliver Pechenik for very helpful conversations. 
We also thank the Institute for Computational and Experimental 
Research in Mathematics (ICERM) for running a workshop during which 
some of the later work on this project was done.  

GTB was supported by NSF grant DMS-2503536. 
CD was supported by a Benjamin Peirce Fellowship at Harvard University.  
PH was supported by ICERM grant DMS-2424556 and  Simons Foundation 
travel funding for mathematicians.  
JM was partially supported by NSF grant DMS-2204001. 
DES was partially supported by NSF grant DMS-2246570.
The opinions, findings, conclusions and recommendations expressed in 
this material are those of the authors and do not necessarily reflect 
the views of the National Science Foundation.

\bibliographystyle{alpha}
\bibliography{main}
\end{document}

%% file: rootFig.tex
\begin{tikzpicture}[scale=.5]
\draw[fill, blue] (-8.66025, -5) circle (.05);
\draw[fill, red] (0, 10) circle (.05);
\draw[fill, blue] (8.66025, -5) circle (.05);
\draw[fill, blue] (3.06016e-16, -5) circle (.05);
\draw[fill, red] (5.56944, 0.353438) circle (.05);
\draw[fill, red] (3.09081, 4.64656) circle (.05);
\draw[fill, red] (1.60979e-16, 2.10929) circle (.05);
\draw[fill, blue] (-4.70053e-16, -1.74188) circle (.05);
\draw[fill, red] (3.85417, 3.32437) circle (.05);
\draw[fill, red] (4.80608, 1.67563) circle (.05);
\draw[fill, blue] (3.21701, -0.531586) circle (.05);
\draw[fill, blue] (-3.69872e-16, 0.760644) circle (.05);
\draw[fill, red] (4.33013, 2.5) circle (.05);
\draw[fill, blue] (-5.22306e-16, -0.706876) circle (.05);
\draw[fill, blue] (1.187, -1.29531) circle (.05);
\draw[fill, red] (2.34567, 2.32094) circle (.05);
\draw[fill, blue] (2.40304, -0.837814) circle (.05);
\draw[fill, blue] (-5.57627e-16, -4.14963e-16) circle (.05);
\draw[fill, red] (0.907509, 1.3643) circle (.05);
\draw[fill, red] (3.47249, 1.01453) circle (.05);
\draw[fill, blue] (1.8267, -1.05464) circle (.05);
\draw[fill, red] (1.7853, 1.9482) circle (.05);
\draw[fill, blue] (2.78472, -0.176719) circle (.05);
\draw[fill, blue] (0.658737, -0.380322) circle (.05);
\draw[fill, red] (3.15959, 1.82419) circle (.05);
\draw[fill, red] (1.37554, 1.67563) circle (.05);
\draw[fill, red] (3.1912, 0.527322) circle (.05);
\draw[fill, blue] (1.35194, -0.780541) circle (.05);
\draw[fill, red] (2.21385, 1.90953) circle (.05);
\draw[fill, blue] (0.612172, 0.353438) circle (.05);
\draw[fill, blue] (2.99654, 0.190161) circle (.05);
\draw[fill, blue] (0.920734, -0.531586) circle (.05);
\draw[fill, red] (2.7416, 1.86191) circle (.05);
\draw[fill, red] (1.1031, 1.20374) circle (.05);
\draw[fill, red] (3.18282, 0.870938) circle (.05);
\draw[fill, blue] (2.03916, -0.595766) circle (.05);
\draw[fill, blue] (1.11876, -0.645916) circle (.05);
\draw[fill, red] (2.47864, 1.88564) circle (.05);
\draw[fill, blue] (0.812956, 0.701205) circle (.05);
\draw[fill, red] (3.1708, 1.3643) circle (.05);
\draw[fill, blue] (2.66717, -0.0802208) circle (.05);
\draw[fill, blue] (0.837159, -0.291874) circle (.05);
\draw[fill, red] (2.90227, 1.67563) circle (.05);
\draw[fill, red] (1.61665, 1.59961) circle (.05);
\draw[fill, red] (0.951902, 0.941868) circle (.05);
\draw[fill, red] (3.17706, 1.10767) circle (.05);
\draw[fill, blue] (2.3016, -0.380322) circle (.05);
\draw[fill, blue] (0.719479, 0.0456583) circle (.05);
\draw[fill, blue] (1.39998, -0.630593) circle (.05);
\draw[fill, red] (2.17378, 1.78448) circle (.05);
\draw[fill, blue] (0.77693, 0.448561) circle (.05);
\draw[fill, red] (3.02495, 1.53339) circle (.05);
\draw[fill, blue] (2.94666, 0.353438) circle (.05);
\draw[fill, blue] (2.47864, -0.234992) circle (.05);
\draw[fill, blue] (0.780355, -0.128948) circle (.05);
\draw[fill, blue] (1.75344, -0.611334) circle (.05);
\draw[fill, red] (1.84574, 1.67563) circle (.05);
\draw[fill, red] (1.16511, 1.15283) circle (.05);
\draw[fill, red] (3.09081, 0.825322) circle (.05);
\draw[fill, blue] (2.09946, -0.465509) circle (.05);
\draw[fill, blue] (0.750268, 0.261579) circle (.05);
\draw[fill, blue] (1.1031, -0.496868) circle (.05);
\draw[fill, red] (2.51383, 1.77042) circle (.05);
\draw[fill, blue] (1.57632, -0.620985) circle (.05);
\draw[fill, red] (2.00328, 1.7279) circle (.05);
\draw[fill, red] (1.42022, 1.40525) circle (.05);
\draw[fill, red] (3.00396, 0.541012) circle (.05);
\draw[fill, blue] (2.62759, -0.0477244) circle (.05);
\draw[fill, blue] (0.897176, -0.262121) circle (.05);
\draw[fill, red] (2.8203, 1.6283) circle (.05);
\draw[fill, blue] (1.94267, -0.531586) circle (.05);
\draw[fill, red] (1.68981, 1.57655) circle (.05);
\draw[fill, blue] (0.963543, 0.831093) circle (.05);
\draw[fill, red] (3.09081, 1.13486) circle (.05);
\draw[fill, red] (1.29412, 1.28047) circle (.05);
\draw[fill, red] (3.04493, 0.675145) circle (.05);
\draw[fill, blue] (2.80724, 0.178148) circle (.05);
\draw[fill, blue] (1.01958, -0.401658) circle (.05);
\draw[fill, red] (2.63075, 1.7162) circle (.05);
\draw[fill, blue] (1.41649, -0.579064) circle (.05);
\draw[fill, red] (2.15983, 1.74094) circle (.05);
\draw[fill, blue] (0.832251, 0.480501) circle (.05);
\draw[fill, red] (2.98326, 1.44334) circle (.05);
\draw[fill, red] (1.56807, 1.49919) circle (.05);
\draw[fill, red] (2.93067, 0.405813) circle (.05);
\draw[fill, blue] (2.39239, -0.251412) circle (.05);
\draw[fill, blue] (0.817638, -0.0518875) circle (.05);
\draw[fill, blue] (2.71824, 0.0662474) circle (.05);
\draw[fill, blue] (0.961215, -0.335126) circle (.05);
\draw[fill, red] (2.71824, 1.67563) circle (.05);
\draw[fill, blue] (1.23547, -0.531586) circle (.05);
\draw[fill, red] (2.3559, 1.75727) circle (.05);
\draw[fill, blue] (0.912185, 0.69395) circle (.05);
\draw[fill, red] (3.05154, 1.24748) circle (.05);
\draw[fill, red] (1.187, 1.13486) circle (.05);
\draw[fill, red] (3.05831, 0.809213) circle (.05);
\draw[fill, blue] (2.12003, -0.421092) circle (.05);
\draw[fill, blue] (0.794535, 0.232134) circle (.05);
\draw[fill, blue] (2.87442, 0.302068) circle (.05);
\draw[fill, blue] (1.09805, -0.448887) circle (.05);
\draw[fill, red] (2.52559, 1.73191) circle (.05);
\draw[fill, blue] (1.6437, -0.573073) circle (.05);
\draw[fill, red] (1.94267, 1.67563) circle (.05);
\draw[fill, blue] (1.32362, -0.554705) circle (.05);
\draw[fill, red] (2.25775, 1.74909) circle (.05);
\draw[fill, blue] (0.874793, 0.594101) circle (.05);
\draw[fill, red] (3.02077, 1.33574) circle (.05);
\draw[fill, red] (1.0602, 0.962485) circle (.05);
\draw[fill, red] (3.07591, 0.985537) circle (.05);
\draw[fill, blue] (2.2875, -0.316754) circle (.05);
\draw[fill, blue] (0.80924, 0.0513545) circle (.05);
\draw[fill, blue] (2.61333, -0.0360225) circle (.05);
\draw[fill, blue] (0.918779, -0.251412) circle (.05);
\draw[fill, red] (2.79132, 1.61157) circle (.05);
\draw[fill, blue] (1.91041, -0.510127) circle (.05);
\draw[fill, red] (1.71527, 1.56852) circle (.05);
\draw[fill, blue] (1.15983, -0.486066) circle (.05);
\draw[fill, red] (2.44715, 1.74363) circle (.05);
\draw[fill, blue] (0.967424, 0.794168) circle (.05);
\draw[fill, red] (3.06115, 1.14421) circle (.05);
\draw[fill, red] (1.35194, 1.3037) circle (.05);
\draw[fill, red] (3.00562, 0.61932) circle (.05);
\draw[fill, red] (1.1227, 1.04745) circle (.05);
\draw[fill, red] (3.06698, 0.896057) circle (.05);
\draw[fill, blue] (2.20981, -0.365155) circle (.05);
\draw[fill, blue] (0.802629, 0.132628) circle (.05);
\draw[fill, blue] (2.4883, -0.157908) circle (.05);
\draw[fill, blue] (0.86368, -0.142717) circle (.05);
\draw[fill, red] (2.89257, 1.52282) circle (.05);
\draw[fill, blue] (1.74324, -0.549581) circle (.05);
\draw[fill, red] (1.85491, 1.63429) circle (.05);
\draw[fill, blue] (1.42249, -0.560371) circle (.05);
\draw[fill, red] (2.15475, 1.72505) circle (.05);
\draw[fill, blue] (0.852154, 0.49196) circle (.05);
\draw[fill, red] (2.96865, 1.4118) circle (.05);
\draw[fill, red] (1.55143, 1.4648) circle (.05);
\draw[fill, red] (2.92498, 0.424388) circle (.05);
\draw[fill, red] (1.00974, 0.870938) circle (.05);
\draw[fill, red] (3.06808, 1.06968) circle (.05);
\draw[fill, blue] (2.36302, -0.256979) circle (.05);
\draw[fill, blue] (0.830668, -0.0249767) circle (.05);
\draw[fill, blue] (2.73404, 0.11158) circle (.05);
\draw[fill, blue] (0.997732, -0.347857) circle (.05);
\draw[fill, red] (2.66717, 1.67563) circle (.05);
\draw[fill, blue] (2.55004, -0.0977233) circle (.05);
\draw[fill, blue] (0.89154, -0.19768) circle (.05);
\draw[fill, red] (2.84043, 1.56852) circle (.05);
\draw[fill, blue] (1.81948, -0.531587) circle (.05);
\draw[fill, red] (1.79005, 1.60375) circle (.05);
\draw[fill, blue] (1.5454, -0.567432) circle (.05);
\draw[fill, red] (2.03394, 1.69691) circle (.05);
\draw[fill, blue] (0.821431, 0.353438) circle (.05);
\draw[fill, red] (1.42698, 1.3643) circle (.05);
\draw[fill, red] (2.97406, 0.543263) circle (.05);
\draw[fill, red] (1.19487, 1.1295) circle (.05);
\draw[fill, red] (1.19258, 1.12675) circle (.05);
\draw[fill, red] (3.04854, 0.801101) circle (.05);
\draw[fill, blue] (2.12859, -0.405845) circle (.05);
\draw[fill, blue] (0.810183, 0.221689) circle (.05);
\draw[fill, blue] (2.84949, 0.284339) circle (.05);
\draw[fill, blue] (1.09071, -0.434447) circle (.05);
\draw[fill, red] (2.52858, 1.7221) circle (.05);
\draw[fill, blue] (2.42039, -0.211588) circle (.05);
\draw[fill, blue] (0.846171, -0.0802097) circle (.05);
\draw[fill, red] (2.93234, 1.4648) circle (.05);
\draw[fill, blue] (1.66439, -0.557579) circle (.05);
\draw[fill, red] (1.92043, 1.66413) circle (.05);
\draw[fill, blue] (1.66812, -0.557108) circle (.05);
\draw[fill, blue] (1.30035, -0.531586) circle (.05);
\draw[fill, red] (2.28439, 1.73786) circle (.05);
\draw[fill, blue] (0.902881, 0.635872) circle (.05);
\draw[fill, red] (3.01966, 1.283) circle (.05);
\draw[fill, blue] (1.48408, -0.563869) circle (.05);
\draw[fill, red] (2.09337, 1.71062) circle (.05);
\draw[fill, blue] (0.837165, 0.424281) circle (.05);
\draw[fill, red] (1.48399, 1.41031) circle (.05);
\draw[fill, red] (2.95044, 0.488363) circle (.05);
\draw[fill, red] (1.28303, 1.22671) circle (.05);
\draw[fill, red] (3.02304, 0.697274) circle (.05);
\draw[fill, blue] (2.01311, -0.460407) circle (.05);
\draw[fill, blue] (2.77796, 0.175618) circle (.05);
\draw[fill, blue] (1.01038, -0.39235) circle (.05);
\draw[fill, red] (2.60767, 1.70829) circle (.05);
\draw (-8.66025, 3.5-.25) -- (8.66025,3.5-4.580125);
\end{tikzpicture}

%% file: Kleinfigure.tex
\begin{tikzpicture}[scale=.5]
\draw[fill] (-13.9248, -7.72768) circle (.05);
\draw[fill] (-13.9248, -2.21173) circle (.05);
\draw[fill] (13.9248, -7.72768) circle (.05);
\draw[fill] (-13.9248, 7.72768) circle (.05);
\draw[fill] (13.9248, -2.21173) circle (.05);
\draw[fill] (-13.9248, 2.21173) circle (.05);
\draw[fill] (13.9248, 7.72768) circle (.05);
\draw[fill] (-3.98539, -12.1511) circle (.05);
\draw[fill] (13.9248, 2.21173) circle (.05);
\draw[fill] (-6.59969, -9.53683) circle (.05);
\draw[fill] (-10.3775, -5.75906) circle (.05);
\draw[fill] (3.98539, -12.1511) circle (.05);
\draw[fill] (-3.98539, 12.1511) circle (.05);
\draw[fill] (-8.40885, -7.72768) circle (.05);
\draw[fill] (6.59969, -9.53683) circle (.05);
\draw[fill] (-6.59969, 9.53683) circle (.05);
\draw[fill] (10.3775, -5.75906) circle (.05);
\draw[fill] (-10.3775, 5.75906) circle (.05);
\draw[fill] (3.98539, 12.1511) circle (.05);
\draw[fill] (8.40885, -7.72768) circle (.05);
\draw[fill] (-8.40885, 7.72768) circle (.05);
\draw[fill] (6.59969, 9.53683) circle (.05);
\draw[fill] (6.64807e-12, -11.1668) circle (.05);
\draw[fill] (10.3775, 5.75906) circle (.05);
\draw[fill] (-3.98539, -10.1825) circle (.05);
\draw[fill] (-11.1668, -1.99754e-13) circle (.05);
\draw[fill] (8.40885, 7.72768) circle (.05);
\draw[fill] (-2.10858, -10.646) circle (.05);
\draw[fill] (-10.3775, -2.21173) circle (.05);
\draw[fill] (6.64807e-12, 11.1668) circle (.05);
\draw[fill] (-9.32074, -5.17262) circle (.05);
\draw[fill] (3.98539, -10.1825) circle (.05);
\draw[fill] (-3.98539, 10.1825) circle (.05);
\draw[fill] (11.1668, -1.99754e-13) circle (.05);
\draw[fill] (-9.85967, -3.66255) circle (.05);
\draw[fill] (2.10858, -10.646) circle (.05);
\draw[fill] (-2.10858, 10.646) circle (.05);
\draw[fill] (10.3775, -2.21173) circle (.05);
\draw[fill] (-10.3775, 2.21173) circle (.05);
\draw[fill] (9.32074, -5.17262) circle (.05);
\draw[fill] (-9.32074, 5.17262) circle (.05);
\draw[fill] (3.98539, 10.1825) circle (.05);
\draw[fill] (-7.04652, -7.72768) circle (.05);
\draw[fill] (9.85967, -3.66255) circle (.05);
\draw[fill] (-9.85967, 3.66255) circle (.05);
\draw[fill] (2.10858, 10.646) circle (.05);
\draw[fill] (10.3775, 2.21173) circle (.05);
\draw[fill] (-5.84084, -8.44025) circle (.05);
\draw[fill] (9.32074, 5.17262) circle (.05);
\draw[fill] (-3.98539, -9.53683) circle (.05);
\draw[fill] (-10.3775, -0.633014) circle (.05);
\draw[fill] (7.04652, -7.72768) circle (.05);
\draw[fill] (-7.04652, 7.72768) circle (.05);
\draw[fill] (9.85967, 3.66255) circle (.05);
\draw[fill] (-4.9697, -8.9551) circle (.05);
\draw[fill] (-10.3775, 0.633014) circle (.05);
\draw[fill] (-7.87564, -6.52957) circle (.05);
\draw[fill] (5.84083, -8.44025) circle (.05);
\draw[fill] (-5.84084, 8.44025) circle (.05);
\draw[fill] (-8.9551, -4.9697) circle (.05);
\draw[fill] (3.98539, -9.53683) circle (.05);
\draw[fill] (-3.98539, 9.53683) circle (.05);
\draw[fill] (10.3775, -0.633014) circle (.05);
\draw[fill] (7.04652, 7.72768) circle (.05);
\draw[fill] (-1.53056, -10.1825) circle (.05);
\draw[fill] (-8.40885, -5.75906) circle (.05);
\draw[fill] (4.9697, -8.9551) circle (.05);
\draw[fill] (-4.9697, 8.95508) circle (.05);
\draw[fill] (10.3775, 0.633014) circle (.05);
\draw[fill] (7.87564, -6.52957) circle (.05);
\draw[fill] (-7.87564, 6.52957) circle (.05);
\draw[fill] (5.84083, 8.4404) circle (.05);
\draw[fill] (-0.438105, -10.1825) circle (.05);
\draw[fill] (8.9551, -4.9697) circle (.05);
\draw[fill] (-8.9551, 4.96971) circle (.05);
\draw[fill] (3.98539, 9.53683) circle (.05);
\draw[fill] (1.53056, -10.1825) circle (.05);
\draw[fill] (-6.59969, -7.72768) circle (.05);
\draw[fill] (-9.70764, -3.04699) circle (.05);
\draw[fill] (-1.53056, 10.1825) circle (.05);
\draw[fill] (8.40895, -5.75906) circle (.05);
\draw[fill] (-8.40885, 5.75913) circle (.05);
\draw[fill] (4.9697, 8.95537) circle (.05);
\draw[fill] (0.438002, -10.1825) circle (.05);
\draw[fill] (-7.18143, -7.18143) circle (.05);
\draw[fill] (7.87564, 6.52937) circle (.05);
\draw[fill] (-2.66755, -9.77665) circle (.05);
\draw[fill] (-9.85518, -2.20858) circle (.05);
\draw[fill] (-0.438105, 10.181) circle (.05);
\draw[fill] (8.9551, 4.96965) circle (.05);
\draw[fill] (-3.98535, -9.30638) circle (.05);
\draw[fill] (-10.1076, -0.849385) circle (.05);
\draw[fill] (1.53056, 10.1825) circle (.05);
\draw[fill] (6.59963, -7.72768) circle (.05);
\draw[fill] (-6.59969, 7.7276) circle (.05);
\draw[fill] (9.70771, -3.04699) circle (.05);
\draw[fill] (-9.70764, 3.04701) circle (.05);
\draw[gray, opacity=.5, thin] (0, -10) -- (0, 10);
\draw[gray, opacity=.5, thin] (-10, 0) -- (10, 0);
\draw[gray, opacity=.5, thin] (2.60065, -9.65591) -- (-6.81781, 7.31556);
\draw[gray, opacity=.5, thin] (5.17956, -8.55407) -- (-9.83053, 1.83323);
\draw[gray, opacity=.5, thin] (2.60065, 9.65591) -- (-6.81781, -7.31556);
\draw[gray, opacity=.5, thin] (6.81781, 7.31556) -- (-2.60065, -9.65591);
\draw[gray, opacity=.5, thin] (9.83053, 1.83323) -- (-5.17956, -8.55407);
\draw[gray, opacity=.5, thin] (5.17956, 8.55407) -- (-9.83053, -1.83323);
\draw[gray, opacity=.5, thin] (6.81781, -7.31556) -- (-2.60065, 9.65591);
\draw[gray, opacity=.5, thin] (7.61769, -6.47848) -- (-9.51201, -3.08574);
\draw[gray, opacity=.5, thin] (9.83053, -1.83323) -- (-5.17956, 8.55407);
\draw[gray, opacity=.5, thin] (9.51201, -3.08574) -- (-7.61769, -6.47848);
\draw (-1.71397, -9.85202) -- (-9.26699, 3.75805);
\draw[gray, opacity=.5, thin] (7.61769, 6.47848) -- (-9.51201, 3.08574);
\draw (-5.89887, -8.07486) -- (-8.11056, 5.84969);
\draw[gray, opacity=.5, thin] (9.51201, 3.08574) -- (-7.61769, 6.47848);
\draw (9.26699, 3.75805) -- (1.71397, -9.85202);
\draw (-1.71397, 9.85202) -- (-9.26699, -3.75805);
\draw (8.11056, 5.84969) -- (5.89887, -8.07486);
\draw (-5.89887, 8.07486) -- (-8.11056, -5.84969);
\draw (9.26699, -3.75805) -- (1.71397, 9.85202);
\draw (3.48554, -9.37289) -- (-8.35964, -5.48785);
\draw (8.11056, -5.84969) -- (5.89887, 8.07486);
\draw (-0.741589, -9.97246) -- (-9.07156, -4.20795);
\draw (-4.75387, -8.79777) -- (-9.98072, 0.62069);
\draw (8.35964, -5.48785) -- (-3.48554, -9.37289);
\draw (3.48554, 9.37289) -- (-8.35964, 5.48785);
\draw (-3.17904, -9.48123) -- (-9.71541, -2.3687);
\draw (9.07156, -4.20795) -- (0.741589, -9.97246);
\draw (-0.741589, 9.97246) -- (-9.07156, 4.20795);
\draw (9.98072, 0.62069) -- (4.75387, -8.79777);
\draw (-4.75387, 8.79777) -- (-9.98072, -0.62069);
\draw (8.35964, 5.48785) -- (-3.48554, 9.37289);
\draw (9.71541, -2.3687) -- (3.17904, -9.48123);
\draw (-3.17904, 9.48123) -- (-9.71541, 2.3687);
\draw (9.07156, 4.20795) -- (0.741589, 9.97246);
\draw (4.45042, -8.9551) -- (-4.45042, -8.9551);
\draw (9.98072, -0.62069) -- (4.75387, 8.79777);
\draw (0.433894, -9.99058) -- (-7.10027, -7.04174);
\draw (-8.9551, -0) -- (-8.9551, -0);
\draw (9.71541, 2.3687) -- (3.17904, 9.48123);
\draw (2.02151, -9.79354) -- (-5.60192, -8.28363);
\draw (-8.52069, -5.2343) -- (-9.91445, 1.30526);
\draw (4.45042, 8.9551) -- (-4.45042, 8.9551);
\draw (-6.52185, -7.5806) -- (-9.88326, -1.52355);
\draw (7.10027, -7.04174) -- (-0.433894, -9.99058);
\draw (0.433894, 9.99058) -- (-7.10027, 7.04174);
\draw (-7.83322, -6.21617) -- (-9.99179, -0.405252);
\draw (5.60192, -8.28363) -- (-2.02151, -9.79354);
\draw (2.02151, 9.79354) -- (-5.60192, 8.28363);
\draw (9.91445, 1.30526) -- (8.52069, -5.2343);
\draw (-8.52069, 5.2343) -- (-9.91445, -1.30526);
\draw (9.88326, -1.52355) -- (6.52185, -7.5806);
\draw (-6.52185, 7.5806) -- (-9.88326, 1.52355);
\draw (7.10027, 7.04174) -- (-0.433894, 9.99058);
\draw (-4.27993, -9.03782) -- (-8.60567, -5.09338);
\draw (9.99179, -0.405252) -- (7.83322, -6.21617);
\draw (-7.83322, 6.21617) -- (-9.99179, 0.405252);
\draw (5.60192, 8.28363) -- (-2.02151, 9.79354);
\draw (9.91445, -1.30526) -- (8.52069, 5.2343);
\draw (-3.69046, -9.29411) -- (-7.39764, -6.72866);
\draw (9.88326, 1.52355) -- (6.52185, 7.5806);
\draw (-1.39672, -9.90198) -- (-6.06415, -7.95148);
\draw (-9.43395, -3.31673) -- (-9.76713, 2.14548);
\draw (8.60567, -5.09338) -- (4.27993, -9.03782);
\draw (-4.27993, 9.03782) -- (-8.60567, 5.09338);
\draw (9.99179, 0.405252) -- (7.83322, 6.21617);
\draw (-2.84968, -9.58537) -- (-6.62619, -7.48956);
\draw (-9.43395, 3.31673) -- (-9.76713, -2.14548);
\draw (-6.178, -7.86335) -- (-8.87181, -4.61422);
\draw (7.39764, -6.72867) -- (3.69046, -9.29411);
\draw (-3.69048, 9.2941) -- (-7.39763, 6.72867);
\draw (-7.48957, -6.62619) -- (-9.58537, -2.84968);
\draw (6.06415, -7.95148) -- (1.39672, -9.90198);
\draw (-1.39672, 9.90198) -- (-6.06415, 7.95148);
\draw (9.76713, 2.14548) -- (9.43395, -3.31673);
\draw (8.60567, 5.09338) -- (4.27993, 9.03782);
\draw (0.913994, -9.95814) -- (-3.80113, -9.2494);
\draw (-7.00368, -7.13782) -- (-9.18656, -3.95058);
\draw (6.62617, -7.48958) -- (2.84969, -9.58537);
\draw (-2.84974, 9.58535) -- (-6.62616, 7.4896);
\draw (9.76713, -2.14548) -- (9.43395, 3.31673);
\draw (8.87182, -4.6142) -- (6.17799, -7.86336);
\draw (-6.17799, 7.86336) -- (-8.87181, 4.61421);
\draw (7.39793, 6.72835) -- (3.68991, 9.29433);
\draw (1.50807, -9.88563) -- (-2.35159, -9.71957);
\draw (9.58537, -2.84968) -- (7.48956, -6.62619);
\draw (-7.48956, 6.62619) -- (-9.58537, 2.84968);
\draw (6.06415, 7.95148) -- (1.39672, 9.90198);
\draw (3.80113, -9.2494) -- (-0.913994, -9.95814);
\draw (-5.03678, -8.63891) -- (-7.74428, -6.32662);
\draw (-8.82493, -4.70326) -- (-9.92974, -1.18336);
\draw (0.913992, 9.95814) -- (-3.80113, 9.2494);
\draw (9.18676, -3.95013) -- (7.00342, -7.13808);
\draw (-7.0035, 7.138) -- (-9.18662, 3.95045);
\draw (6.62687, 7.48897) -- (2.84855, 9.5857);
\draw (2.35148, -9.7196) -- (-1.50816, -9.88562);
\draw (-5.7275, -8.1973) -- (-8.1973, -5.7275);
\draw (8.87162, 4.61458) -- (6.17857, 7.86291);
\draw (-1.03359, -9.94644) -- (-4.16132, -9.09304);
\draw (-9.35532, -3.53241) -- (-9.96811, -0.798038);
\draw (1.50062, 9.88677) -- (-2.34439, 9.72131);
\draw (9.58535, 2.84974) -- (7.48966, 6.62608);
\draw (-2.45516, -9.69393) -- (-5.32176, -8.46634);
\draw (-9.68391, -2.49438) -- (-9.96438, 0.843255);
\draw (3.80113, 9.2494) -- (-0.913994, 9.95814);
\draw (7.7441, -6.32684) -- (5.03694, -8.63882);
\draw (-5.0371, 8.63873) -- (-7.74411, 6.32683);
\draw (9.92978, -1.18302) -- (8.82478, -4.70353);
\draw (-8.8249, 4.7033) -- (-9.92974, 1.18335);
\draw (0,0) circle (10);
\end{tikzpicture}

%% file: main.bib
@article{Reading06,
  author  = {Reading, Nathan},
  title   = {Cambrian lattices},
  journal = {Advances in Mathematics},
  volume  = {205},
  number  = {2},
  year    = {2006},
  pages   = {313--353},
  doi     = {10.1016/j.aim.2005.07.010}
}

@article{ReadingSpeyer09,
  author  = {Reading, Nathan and Speyer, David E.},
  title   = {Cambrian fans},
  journal = {Journal of the European Mathematical Society},
  volume  = {11},
  number  = {2},
  year    = {2009},
  pages   = {407--447},
  doi     = {10.4171/JEMS/155}
}

@incollection{Bjorner84,
  author    = {Bj{\"o}rner, Anders},
  title     = {Orderings of {Coxeter} groups},
  booktitle = {Combinatorics and Algebra},
  series    = {Contemporary Mathematics},
  volume    = {34},
  publisher = {American Mathematical Society},
  address   = {Providence, RI},
  year      = {1984},
  pages     = {175--195}
}

@incollection{Reading12,
  author    = {Reading, Nathan},
  title     = {From the {Tamari} lattice to {Cambrian} lattices and beyond},
  booktitle = {Associahedra, {Tamari} Lattices and Related Structures},
  editor    = {M{\"u}ller-Hoissen, Folkert and Pallo, Jean Marcel and Stasheff, Jim},
  series    = {Progress in Mathematics},
  volume    = {299},
  publisher = {Birkh{\"a}user},
  address   = {Basel},
  year      = {2012},
  pages     = {293--322},
  doi       = {10.1007/978-3-0348-0405-9_15}
}

@article{BS72,
  author  = {Brieskorn, Egbert and Saito, Kyoji},
  title   = {Artin-Gruppen und Coxeter-Gruppen},
  journal = {Inventiones Mathematicae},
  volume  = {17},
  year    = {1972},
  pages   = {245--271},
  doi     = {10.1007/BF01406235}
}

@article {barkley:2024combinatorial,
    AUTHOR = {Barkley, Grant T. and Speyer, David E.},
     TITLE = {Combinatorial descriptions of biclosed sets in affine type},
   JOURNAL = {Comb. Theory},
  FJOURNAL = {Combinatorial Theory},
    VOLUME = {4},
      YEAR = {2024},
    NUMBER = {2},
     PAGES = {Paper No. 7, 33},
      ISSN = {2766-1334},
   MRCLASS = {20F55 (06B23 17B22)},
  MRNUMBER = {4807146},
}

@article{barkley:2023affine,
  title={Affine extended weak order is a lattice},
  author={Barkley, Grant T and Speyer, David E},
  journal={arXiv preprint arXiv:2311.05737},
  year={2023},
  note={To appear in {\textit {Transactions of the AMS}}},
  url={https://doi.org/10.1090/tran/9706}
}

@article {dyer:1994quotients,
    AUTHOR = {Dyer, M. J.},
     TITLE = {Quotients of twisted {B}ruhat orders},
   JOURNAL = {J. Algebra},
  FJOURNAL = {Journal of Algebra},
    VOLUME = {163},
      YEAR = {1994},
    NUMBER = {3},
     PAGES = {861--879},
      ISSN = {0021-8693,1090-266X},
   MRCLASS = {20F55 (06A07)},
  MRNUMBER = {1265869},
MRREVIEWER = {Robert\ B\'edard},
       DOI = {10.1006/jabr.1994.1049},
       URL = {https://doi.org/10.1006/jabr.1994.1049},
}

@article {dyer:2019weak,
    AUTHOR = {Dyer, Matthew},
     TITLE = {On the weak order of {C}oxeter groups},
   JOURNAL = {Canad. J. Math.},
  FJOURNAL = {Canadian Journal of Mathematics. Journal Canadien de
              Math\'ematiques},
    VOLUME = {71},
      YEAR = {2019},
    NUMBER = {2},
     PAGES = {299--336},
      ISSN = {0008-414X,1496-4279},
   MRCLASS = {20F55 (06B23 17B22)},
  MRNUMBER = {3943754},
MRREVIEWER = {Jean\ Fromentin},
       DOI = {10.4153/cjm-2017-059-0},
       URL = {https://doi.org/10.4153/cjm-2017-059-0},
}

@misc{BarkleyDefant25,
  author        = {Barkley, Grant and Defant, Colin},
  title         = {The Affine {T}amari Lattice},
  year          = {2025},
  eprint        = {2502.07198},
  archivePrefix = {arXiv},
  primaryClass  = {math.CO},
  doi           = {10.48550/arXiv.2502.07198}
}

@article {wang:2019infinite,
    AUTHOR = {Wang, Weijia},
     TITLE = {Infinite reduced words, lattice property and braid graph of
              affine {W}eyl groups},
   JOURNAL = {J. Algebra},
  FJOURNAL = {Journal of Algebra},
    VOLUME = {536},
      YEAR = {2019},
     PAGES = {170--214},
      ISSN = {0021-8693,1090-266X},
   MRCLASS = {20F55 (05E15)},
  MRNUMBER = {3989614},
MRREVIEWER = {Himmet\ Can},
       DOI = {10.1016/j.jalgebra.2019.06.032},
       URL = {https://doi.org/10.1016/j.jalgebra.2019.06.032},
}

@article{DyerHohlwegRipoll,
 author = {Dyer, Matthew and Hohlweg, Christophe and Ripoll, Vivien},
 title = {Imaginary cones and limit roots of infinite {Coxeter} groups},
 fjournal = {Mathematische Zeitschrift},
 journal = {Math. Z.},
 issn = {0025-5874},
 volume = {284},
 number = {3-4},
 pages = {715--780},
 year = {2016},
 language = {English},
 doi = {10.1007/s00209-016-1671-4},
 zbMATH = {6657022},
 Zbl = {1397.20048}
}

@article{HohlwegLabbe,
 author = {Hohlweg, Christophe and Labb{\'e}, Jean-Philippe},
 title = {On inversion sets and the weak order in {Coxeter} groups},
 fjournal = {European Journal of Combinatorics},
 journal = {Eur. J. Comb.},
 issn = {0195-6698},
 volume = {55},
 pages = {1--19},
 year = {2016},
 language = {English},
 doi = {10.1016/j.ejc.2016.01.002},
 zbMATH = {6569059},
 Zbl = {1333.05143}
}

@phdthesis{BarkleyThesis,
  title        = {Infinite root systems in algebra and geometry},
  author       = {Grant T. Barkley},
  year         = 2025,
  month        = {April},
  address      = {Cambridge, MA},
  school       = {Harvard University},
  type         = {Ph{D} thesis}
}

@phdthesis{LabbeThesis,
  title        = {Polyhedral {C}ombinatorics of {C}oxeter {G}roups},
  author       = {Jean-Philippe Labb{\'e}},
  year         = 2013,
  address      = {Berlin},
  school       = {Freien Universit\:at Berlin},
  type         = {Ph{D} thesis}
}

@article{DyerWang2021,
 author = {Dyer, Matthew and Wang, Weijia},
 title = {Oriented matroid structures from realized root systems},
 fjournal = {Journal of Algebraic Combinatorics},
 journal = {J. Algebr. Comb.},
 issn = {0925-9899},
 volume = {53},
 number = {4},
 pages = {945--971},
 year = {2021},
 language = {English},
 doi = {10.1007/s10801-020-00949-0},
 zbMATH = {7369370},
 Zbl = {1467.05026}
}

@misc{BDHMMS25,
  title         = {Extended Weak Order for the Rank 3 Universal {C}oxeter Group},
  author        = {Barkley, Grant and Defant, Colin and Hersh, Patricia and McCammond, Jon and McConville, Thomas and Speyer, David E.},
  year          = {2025},
  journal       = {Mathematische Zeitschrift},
  note          = {To appear in {\em Math. Z.}},
  eprint        = {2509.00871},
  archivePrefix = {arXiv},
  primaryClass  = {math.CO},
  doi           = {10.48550/arXiv.2509.00871}
}
